\numberwithin{equation}{section}
\newcommand{\kom}[1]{}
\renewcommand{\kom}[1]{{\bf [#1]}}
\newcommand{\be}{\begin{equation}}
\newcommand{\ee}{\end{equation}}
 \def\1{\raisebox{2pt}{\rm{$\chi$}}}
\newtheorem{theorem}{Theorem}[section]
\newtheorem{lemma}[theorem]{Lemma}
\newtheorem{proposition}[theorem]{Proposition}
\newtheorem{definition}[theorem]{Definition}
\newtheorem{remark}[theorem]{Remark}
\newcommand{\R}{{\mathbb R}}
\newcommand{\E}{{\mathbb E\,}}
 \newcommand{\eps}{{\varepsilon}}
 \def\1{\raisebox{2pt}{\rm{$\chi$}}}
\def\vint_#1{\mathchoice%
          {\mathop{\kern 0.2em\vrule width 0.6em height 0.69678ex depth -0.58065ex
                  \kern -0.8em \intop}\nolimits_{\kern -0.4em#1}}%
          {\mathop{\kern 0.1em\vrule width 0.5em height 0.69678ex depth -0.60387ex
                  \kern -0.6em \intop}\nolimits_{#1}}%
          {\mathop{\kern 0.1em\vrule width 0.5em height 0.69678ex
              depth -0.60387ex
                  \kern -0.6em \intop}\nolimits_{#1}}%
          {\mathop{\kern 0.1em\vrule width 0.5em height 0.69678ex depth -0.60387ex
                  \kern -0.6em \intop}\nolimits_{#1}}}
\def\vintslides_#1{\mathchoice%
          {\mathop{\kern 0.1em\vrule width 0.5em height 0.697ex depth -0.581ex
                  \kern -0.6em \intop}\nolimits_{\kern -0.4em#1}}%
          {\mathop{\kern 0.1em\vrule width 0.3em height 0.697ex depth -0.604ex
                  \kern -0.4em \intop}\nolimits_{#1}}%
          {\mathop{\kern 0.1em\vrule width 0.3em height 0.697ex depth -0.604ex
                  \kern -0.4em \intop}\nolimits_{#1}}%
          {\mathop{\kern 0.1em\vrule width 0.3em height 0.697ex depth -0.604ex
                  \kern -0.4em \intop}\nolimits_{#1}}}
\newcommand{\aveint}[2]{\mathchoice%
          {\mathop{\kern 0.2em\vrule width 0.6em height 0.69678ex depth -0.58065ex
                  \kern -0.8em \intop}\nolimits_{\kern -0.45em#1}^{#2}}%
          {\mathop{\kern 0.1em\vrule width 0.5em height 0.69678ex depth -0.60387ex
                  \kern -0.6em \intop}\nolimits_{#1}^{#2}}%
          {\mathop{\kern 0.1em\vrule width 0.5em height 0.69678ex depth -0.60387ex
                  \kern -0.6em \intop}\nolimits_{#1}^{#2}}%
          {\mathop{\kern 0.1em\vrule width 0.5em height 0.69678ex depth -0.60387ex
                  \kern -0.6em \intop}\nolimits_{#1}^{#2}}}
\newcommand{\dist}{\operatorname{dist}}
\begin{document}
\title[Gradient blow-up rates]{Gradient blow-up rates and sharp gradient estimates for diffusive Hamilton-Jacobi equations}

\date{\today}

\author[Attouchi]{Amal Attouchi}
\address{Department of Mathematics and Statistics, 
University of Jyv\"askyl\"a, 40014 Finland}
\email{amal.a.attouchi@jyu.fi; amalattouchi@gmail.com}

\author[Souplet]{Philippe Souplet}
\address{Universit\'e Sorbonne Paris Nord,
CNRS UMR 7539, Laboratoire Analyse, G\'{e}om\'{e}trie et Applications,
93430 Villetaneuse, France}
\email{souplet@math.univ-paris13.fr}
\keywords{Diffusive Hamilton-Jacobi equations, gradient estimates, gradient
blow-up rates}

\begin{abstract}
Consider the diffusive Hamilton-Jacobi equation
$$u_t-\Delta u=|\nabla u|^p+h(x)\ \ \text{ in } \Omega\times(0,T)$$
 with Dirichlet conditions,
 which arises in stochastic control problems as well as in KPZ type models. 
We study the question of the gradient blowup rate for classical solutions with~$p>2$.

We first consider the case of time-increasing solutions. 
For such solutions, the precise rate was obtained by Guo and Hu (2008) in one space dimension,
but the higher dimensional case has remained an open question (except for radially symmetric solutions in a ball).
Here, we partially answer this question by establishing the optimal estimate
$$C_1(T-t)^{-1/(p-2)}\leq \|\nabla u(t)\|_\infty \leq C_2(T-t)^{-1/(p-2)}
\eqno(1)$$
 for time-increasing gradient blowup solutions in any convex, smooth bounded domain $\Omega$ with $2<p<3$. 
 We also cover the case of (nonradial) solutions in a ball for $p=3$.
 Moreover we obtain the almost sharp rate in general (nonconvex) domains for $2<p\le 3$.
The proofs rely on suitable auxiliary functionals, combined with the following, new Bernstein-type gradient estimate
 with sharp constant:
$$|\nabla u|\le  d_\Omega^{-1/(p-1)}\bigl(d_p+C d_\Omega^\alpha\bigr)
 \ \ \text{ in } \Omega\times(0,T),\qquad d_p=(p-1)^{-1/(p-1)},
 \eqno(2)$$
 where $d_\Omega$ is the function distance to the boundary.
 This close connection between the temporal and spatial estimates (1) and (2) seems to be a completely new observation.
  
 Next, for any $p>2$, we show that more singular rates may occur for solutions which are {\it not} time-increasing.
 Namely, for a suitable class of solutions in one space-dimension, we prove the lower estimate
 $\|u_x(t)\|_\infty \ge C(T-t)^{-2/(p-2)}$.
 \end{abstract}
 
\maketitle

\section{Introduction and main results}
\subsection{Background}
In this paper we study the initial boundary value problem for the diffusive Hamilton–Jacobi 
equation:
\begin{equation}\label{maineq}
\left\{
\begin{aligned}
u_t-\Delta u&=|\nabla u|^p+ h(x), && \quad  x\in\Omega, t>0,\\
u(x,t)&=0, &&\quad x\in \partial\Omega,t>0,\\
u(x,0)&=u_0(x), &&\quad x\in \Omega,
\end{aligned}
\right.
\end{equation}
where $p>1$. Throughout this paper, it is assumed that $\Omega \subset \R^n$ ($n\ge 1$) is a $C^{2+\rho}$-smooth 
 bounded domain for some $\rho>0$, and that 
\begin{equation}\label{hyph}
h\in C^1(\overline \Omega), \quad h\geq 0.
\end{equation}
Also, if no confusion arises, we will simply denote $\|\cdot\|_\infty$ for $\|\cdot\|_{L^\infty(\Omega)}$ and $u(t)$ for~$u(\cdot,t)$.

Problem \eqref{maineq} has a rich background. First of all, let us recall that \eqref{maineq} arises in 
stochastic control problems. Namely, consider the controled $n$-dimensional stochastic differential equation
$$dX_s =\alpha_s ds+dW_s,\  \ s>0, \quad\hbox{ with } X_0 =x\in\Omega,$$
where the stochastic process $(X_s)_{s>0}$ represents the position or state of the system,
$(W_s)_{s>0}$ is a standard Brownian motion and 
$(\alpha_s)_{s>0}$ is the control
(in other words, the controler can choose the velocity of $X$).
The spatial distribution of rewards is given by a function $u_0\in C_0(\overline\Omega)$.
More precisely, at a given time horizon $s=t>0$, the final reward is
$u_0(X_t)$ if $X$ stays in $\Omega$ until time~$t$,
and $0$ otherwise. Finally, the cost of the control at each time $s$ is assumed to be
$k_p |\alpha_s|^q$ as long as $X_s$ stays in $\Omega$, 
where $q=p/(p-1)$ is the conjugate exponent of $p$ and $k_p>0$ is a normalization constant.
The goal of the controler is then to maximize the net gain
$$G_t=\chi_{\tau>t}u_0(X_t)-k_p\displaystyle\int_0^\tau|\alpha_s|^q\, ds,$$
where $\tau$ denotes the first exit time of $X$ from $\Omega$.
It is known (see \cite{BB, BdaLio, FlSo} for details) 
that the maximal gain, also called value function of the stochastic control problem,
is given by the unique global (continuous) viscosity solution $u$ of \eqref{maineq} with $h=0$, namely:
$$u(x,t)=\sup_{(\alpha_s)_s}  \E{\hskip 0.5pt}\bigl(G_t\, | \, X_0=x\bigr),$$
where $\E{\hskip 0.5pt}\bigl(\,\cdot\,| \, X_0=x\bigr)$ denotes the conditional expectation 
with respect to the event $\{X_0=x\}$,
and the supremum is taken over all (admissible) controls.

As another motivation, \eqref{maineq} corresponds to the so-called deterministic KPZ equation,
arising in a well-known model of surface growth by ballistic deposition (see \cite{kpzhang}, \cite{KS}).
Finally, \eqref{maineq} can be seen as one the simplest model parabolic problems
with first order nonlinearity and, from the point of view of
nonlinear parabolic theory, it is thus important to understand its properties
(cp.~for instance with the extensively studied equation with zero order nonlinearity $u_t-\Delta u=u^p$).

Here our concern is about the behavior of classical solutions arising from sufficiently smooth initial data.
For any $u_0\in X$, where
$$
X:=\bigl\{u_0\in C^1(\bar\Omega);\ u_0\ge 0,\ u_0=0\hbox{ on $\partial\Omega$}\bigr\},
$$
problem \eqref{maineq} is locally well posed. 
Namely, there exists a unique maximal, classical solution $u\ge 0$.
We denote by $T=T(u_0)\in (0,\infty]$ its existence time.
When $p>2$, it is known that, for suitably large initial data, solutions may blow up 
in finite time, i.e., $T(u_0)<\infty$, in which case
$$\lim_{t\to T_-}\|\nabla u(t)\|_\infty=\infty$$
(whereas all solutions are global for $p\in (1,2]$). It is also known that the function
itself remains bounded, while its spatial gradient is the quantity to become unbounded.
The blowup phenomenon that occurs for solutions of \eqref{maineq} is usually referred to as gradient blowup (GBU). 

\subsection{Known results on gradient blow-up}
Finite time blowup phenomena for \eqref{maineq} have attracted a lot of attention in the past twenty years. Results include blowup criteria
 \cite{Alaa, ABG, souplet2002, HM04}, blowup locations \cite{Esteve, LiSouplet, SZ}, blowup profiles \cite{ABS, CG,PSpro,PS3, PSloss, SZ},
 continuation after GBU \cite{BdaLio, PZ, PSloss, QR16, PS3, FPS19}, 
 infinite time GBU \cite{SZ, SV}.  
 See also \cite{Dl, Ku, FL94, Gi, AF, AI, FTW, Att1, ZhangHu, Att2, AttBa, LYZ, AttSou1, FLa} for GBU studies for other equations.
 As a consequence of interior gradient estimates \cite{SZ}, it is known that  GBU for problem \eqref{maineq}
 can only take place on the boundary $\partial\Omega$.
For $h\equiv 0$, it is also known from \cite{BdaLio} that the solution can be extended for $t > T$
as a global weak solution (in the viscosity sense
-- its existence was already mentioned above  in connection with the stochastic control problem). Moreover,
this global weak solution becomes a classical solution again \cite{PZ} for all $t$ sufficiently large.

 The question of the gradient blowup rates for problem \eqref{maineq} as $t\to T_-$ is only partially understood.
The lower estimate
\begin{equation}\label{lowerGBU}
\|\nabla  u(t)\|_\infty\geq C (T- t)^{-1/(p-2)},\quad 0<t<T,
\end{equation}
is true for any GBU solution. 
This in particular implies that GBU is always of Type~II, i.e. it does not follow the natural self-similar scaling of the equation
(which would lead to the smaller exponent $1/2(p-1)$ instead of $1/(p-2)$).
Estimate \eqref{lowerGBU} was first established in one space dimension in \cite{CG} by a method of intersection-comparison.
In higher dimension, the weaker estimate 
$$\|\nabla  u\|_{L^\infty(\Omega\times(0,t))}\geq C (T- t)^{-1/(p-2)},\quad 0<t<T,$$
was then proved in \cite{GH} by a method based on regularity estimates (see also \cite{QSbook07} for an alternative proof).
The full lower estimate \eqref{lowerGBU} was finally obtained in \cite{PS3} by combining semigroup arguments and regularity estimates
(the result is stated there for $h=0$, but the proof immediately carries over to the general case).

Upper bounds for the GBU rate are known only in one space dimension. 
The upper bound corresponding to \eqref{lowerGBU} was first conjectured in \cite{CG} on the basis of numerical simulations
and the first analytical result in that direction was obtained in~\cite{GH}.
Namely, considering the problem with inhomogeneous boundary conditions in $\Omega=(0,1)$:
\begin{equation}\label{maineqinhom}
\left\{\begin{aligned}
u_t-u_{xx}&=|u_x|^p, && \quad  x\in(0,1), t>0,\\
u(0,t)=0,\ \ u(1,t)&=M, &&\quad t>0,\\
u(x,0)&=u_0(x), &&\quad x\in (0,1),
\end{aligned}
\right.
\end{equation} 
it was proved in \cite{GH} that any time-increasing GBU solution (i.e., $u_t\ge 0$) satisfies
\begin{equation}\label{upperGBU}
\|u_x(t)\|_\infty\le C (T- t)^{-1/(p-2)},\quad 0<t<T,
\end{equation}
and that such solutions exist for $M>0$ sufficiently large.
However no time-increasing GBU solutions can exist for $M=0$, but an analogous result
was given in \cite{QSbook07} for the original problem \eqref{maineq} in $(0,1)$ with $h$ a sufficiently large positive constant. Then,
for problem \eqref{maineq} in $(0,1)$ with $h=0$, the upper estimate \eqref{upperGBU} was obtained in \cite{PS3} for a suitable class of 
initial data (the corresponding solutions only satisfy $u_t\ge 0$ in a neighborhood of the boundary;
the proof involves the zero-number of the function $u_t$).
On the other hand, the analogue of estimate \eqref{upperGBU} was obtained in \cite{ZhangLi} 
for radially symmetric solutions of \eqref{maineq} in a ball,
which is still an essentially one-dimensional situation,
under the assumption $u_t\ge 0$ (and $h>0$ sufficiently large).
The question whether the upper estimate \eqref{upperGBU} should hold for any GBU solution of \eqref{maineq} in $\Omega=(0,1)$
was answered negatively in \cite{PS3}.
Namely, it was shown that for $h=0$, there exists a class of solutions such that
\begin{equation}\label{upperGBUinfty}
\lim_{t\to T} (T- t)^{1/(p-2)}\|u_x(t)\|_\infty=\infty.
\end{equation}
Moreover this indicates that the assumption $u_t\ge 0$ in the above results is not technical.

The upper GBU rate is a completely open problem in dimension $n\ge 2$ for nonradial solutions.
And for $n=1$, it is also unknown what are the actual rates of the more singular solutions which satisfy \eqref{upperGBUinfty}.
The main goal of this paper is to give some answers to both problems.

\subsection{GBU rate for time-increasing solutions in any space dimension}

We start with the following optimal estimate,
in the case of (smooth bounded) convex domains with $p\in (2,3)$, or symmetric domains with $p\in (2,3]$.

\begin{theorem}\label{Thmain1}
Assume \eqref{hyph} and either
\begin{equation} \label{hypOmega1}
\hbox{$p\in (2, 3)$ and  $\Omega$ is convex}
\end{equation}
or
\begin{equation} \label{hypOmega2}
\hbox{$p\in (2, 3]$ and $\Omega$ is either a ball or an annulus.}
\end{equation}
  Let $u_0\in X$ be such that $T=T(u_0)<\infty$ and 
\begin{equation} \label{hypmonot}
u_t\geq 0\quad\hbox{ in $\Omega\times (0,T)$. }
\end{equation}
Then there exist constants $C_1,C_2>0$ such that
\begin{equation}\label{GBUratecC}
C_1(T-t)^{-\frac{1}{p-2}}\leq \|\nabla u(t)\|_\infty \leq C_2(T-t)^{-\frac{1}{p-2}},\quad 0<t<T.
\end{equation}
\end{theorem}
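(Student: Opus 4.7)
The lower bound in \eqref{GBUratecC} is nothing but the already cited estimate \eqref{lowerGBU}, valid for any GBU solution, so the task reduces to the matching upper bound. My plan combines two ingredients: a purely spatial sharp Bernstein-type gradient estimate, and a parabolic auxiliary-function argument that exploits the monotonicity hypothesis \eqref{hypmonot}.

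First I would establish, as a separate proposition, the sharp Bernstein-type bound announced in the abstract,
$$|\nabla u(x,t)|\le d_\Omega(x)^{-1/(p-1)}\bigl(d_p+Cd_\Omega(x)^\alpha\bigr)\quad\text{in }\Omega\times(0,T),$$
with $d_p=(p-1)^{-1/(p-1)}$, by a Bernstein-type computation in which the convexity of $\Omega$ (or the radial/symmetric structure in the ball/annulus case) is what yields the \emph{sharp} leading constant. The consequence I actually need is the spatial localization of quasi-blowup: whenever $|\nabla u(x,t)|\ge M$ for $M$ large, necessarily $d_\Omega(x)\le CM^{-(p-1)}$, so large gradients live only in an asymptotically thin boundary layer whose thickness is quantitatively tied to $\|\nabla u(t)\|_\infty$.

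Next I would set up an auxiliary functional of the form
$$J(x,t):=u_t(x,t)-\varepsilon\,\Psi\bigl(d_\Omega(x)\bigr)\,|\nabla u(x,t)|^q,$$
for an appropriate exponent $q\in(1,p]$, a weight $\Psi\ge 0$ with $\Psi(0)=0$, and a small $\varepsilon>0$. Differentiating \eqref{maineq} in $t$ gives a linear parabolic equation for $v:=u_t$ with drift $p|\nabla u|^{p-2}\nabla u$, and a Bochner-type identity produces the equation satisfied by $\Psi(d_\Omega)|\nabla u|^q$. After tuning the parameters, one should arrive at an inequality
$$(\partial_t-\Delta+b\cdot\nabla)J\ge -cJ\quad\text{in }\Omega\times(t_0,T),$$
whose bad lower-order terms are absorbed using the sharp spatial estimate from the previous step. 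On the parabolic boundary $J\ge 0$ holds: on $\partial\Omega$ because $u_t=0$ and $\Psi(0)=0$, and at $t=t_0$ close to $T$ by choosing $\varepsilon$ small together with the spatial bound. The maximum principle then propagates $J\ge 0$ up to $T$. Convexity of $\Omega$ enters here through the favorable sign of $\Delta d_\Omega$, and the restriction $p<3$ (resp.\ $p\le 3$) arises as the feasibility range of the exponent balance in this inequality.

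Once $u_t\ge \varepsilon\,\Psi(d_\Omega)|\nabla u|^q$ is in hand, pick for each $t$ a point $x_t$ where $|\nabla u(x_t,t)|\sim M(t):=\|\nabla u(t)\|_\infty$; by the spatial localization, $d_\Omega(x_t)\le CM(t)^{-(p-1)}$, hence
$$u_t(x_t,t)\ge \tilde\varepsilon\, M(t)^{q-(p-1)\beta},$$
where $\beta$ is the order of vanishing of $\Psi$ at $0$, the exponents being chosen so that $q-(p-1)\beta=p-2$. Comparing this with the uniform bound $\int_0^T u_t(x,s)\,ds\le \|u\|_\infty$, after a short selection argument to handle the $t$-dependence of $x_t$, yields exactly $M(t)\le C(T-t)^{-1/(p-2)}$. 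The principal obstacle is the sign-favorable differential inequality for $J$: the Bochner identity for $\Delta(|\nabla u|^q)$ produces a good $|D^2u|^2$ term together with a sign-indefinite contribution $q(q-2)|\nabla u|^{q-4}|\nabla u\cdot D^2u|^2$, and the geometry of $\partial\Omega$ adds a $\Delta d_\Omega$ term; balancing these against the drift from $v=u_t$ is precisely what requires the sharp constant in the spatial estimate and forces the threshold $p\le 3$. A secondary difficulty is the $t$-dependence of the maximizer $x_t$, handled either by strengthening the monotonicity inequality along suitable curves or by a sequence argument with barrier functions.
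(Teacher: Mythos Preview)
There is a genuine gap in your final step. From $u_t\ge\varepsilon\,\Psi(d_\Omega)|\nabla u|^q$ evaluated at a maximizer $x_t$ of $|\nabla u(\cdot,t)|$, you need a \emph{lower} bound on $\Psi(d_\Omega(x_t))$; but the spatial localization $d_\Omega(x_t)\le CM(t)^{-(p-1)}$ is only an \emph{upper} bound on $d_\Omega(x_t)$, hence (since $\Psi(0)=0$) on $\Psi(d_\Omega(x_t))$. Gradient blowup for this problem occurs on $\partial\Omega$, so one expects $d_\Omega(x_t)\to 0$ and the inequality degenerates to $u_t(x_t,t)\ge 0$. Even setting this aside, the passage from a pointwise bound $u_t(x_t,t)\ge c M(t)^{r}$ with $t$-dependent $x_t$ to a rate for $M(t)$ via $\int u_t\,dt\le \|u\|_\infty$ is not a short argument: there is no mechanism linking $u_t(x_t,t)$ to $M'(t)$.

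The paper sidesteps both issues by choosing the auxiliary function $J=u_t-\varepsilon\,u^{p-1}d_\Omega^{2-p}\bigl[1+d_\Omega^\kappa\bigr]$ (no gradient factor; the perturbation $d_\Omega^\kappa$ is essential). The point of this specific form is that $u^{p-1}d_\Omega^{2-p}/d_\Omega=(u/d_\Omega)^{p-1}$: once $J\ge 0$, dividing by $d_\Omega$ and sending $x\to\bar x\in\partial\Omega$ along the inner normal yields the clean ODE $\partial_t u_\nu(\bar x,t)\ge\varepsilon\,u_\nu(\bar x,t)^{p-1}$ on $\partial\Omega$, which integrates directly to $u_\nu\le C(T-t)^{-1/(p-2)}$; the interior gradient is then controlled by the boundary normal derivative via the maximum principle. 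No Bochner term $|D^2u|^2$ appears. The sharp constant $d_p$ enters not through localization of $x_t$ but through the sign analysis of $\mathcal P\bigl(u^{p-1}d_\Omega^{2-p}\bigr)$ (Proposition~\ref{generalcompu2}, Case~3), where any larger constant would destroy the required inequality; and convexity is used only in proving the Bernstein estimate itself, via a Lasry--Lions scaling/comparison bound $|\nabla_\tau u|\le Cd_\Omega^{-1/2}$ on the tangential gradient (Lemma~\ref{lemutau}), not through $\Delta d_\Omega$ in the $J$-computation. The paper notes (Remark~\ref{remaltproof}) that a variant with a gradient factor, $J=u_t-\varepsilon\bigl[u|\nabla u|^{p-2}+u^{p-1}d_\Omega^{2-p+\kappa}\bigr]$, yields only the almost-sharp rate of Theorem~\ref{Thmain1b}, not the sharp rate here.
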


Next, for general bounded domains with $p\in (2,3]$, our conclusion is slightly less precise,
and we have the following almost optimal result.

\begin{theorem}\label{Thmain1b}
Assume \eqref{hyph} and let $p\in (2, 3]$. Let $\Omega$ be any smooth bounded domain and let $u_0\in X$ 
be such that $T=T(u_0)<\infty$ and \eqref{hypmonot} is satisfied.
Then there exists a constant $c>0$ and, for any $\eps>0$, there exists a constant $C_\eps>0$ such that
\begin{equation}
c(T-t)^{-\frac{1}{p-2}}\leq \|\nabla u(t)\|_\infty \leq C_\eps(T-t)^{-\frac{1}{p-2}-\eps},\quad 0<t<T.
\end{equation}
\end{theorem}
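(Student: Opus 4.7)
The lower bound $c(T-t)^{-1/(p-2)}\le\|\nabla u(t)\|_\infty$ is already known in full generality (no convexity, no monotonicity), as recalled in \eqref{lowerGBU}, so the entire work lies in proving the upper bound with $\eps$-loss on an arbitrary smooth bounded domain.

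My plan is to run exactly the scheme that produces the sharp upper bound of Theorem~\ref{Thmain1}, but with a slightly degraded input in the Bernstein-type gradient estimate (labelled (2) in the abstract). In the convex case, the proof of (2) relies on the sign property $-\Delta d_\Omega\ge 0$ in a tubular neighborhood of $\partial\Omega$, which allows an auxiliary function of the form $w=|\nabla u|^2-A\,d_\Omega^{-2/(p-1)}$ to be handled by a maximum principle yielding the sharp constant $d_p=(p-1)^{-1/(p-1)}$. On a general smooth $\Omega$, $\Delta d_\Omega$ has indefinite sign near the boundary, and I propose to accommodate this by working instead with
$$w_\eps=|\nabla u|^2-A_\eps\,d_\Omega^{-2/(p-1)-\eps}$$
for arbitrarily small $\eps>0$. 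The extra negative power of $d_\Omega$ produces, after differentiation, a strictly dominant boundary-layer term that absorbs the bounded contribution of $\Delta d_\Omega$. Running the same type of maximum principle comparison should yield the weakened pointwise bound
$$|\nabla u(x,t)|\le C_\eps\,d_\Omega(x)^{-\frac{1}{p-1}-\eps}\ \ \text{ in }\Omega\times(0,T),$$
with $C_\eps\to\infty$ as $\eps\to 0^+$ but independent of $t$.

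I would then feed this $\eps$-loss spatial estimate into the temporal mechanism of Theorem~\ref{Thmain1}: the monotonicity assumption $u_t\ge 0$ is used to control, on the parabolic boundary, a suitable auxiliary functional $J$ built out of $u_t$, $|\nabla u|$ and $d_\Omega$; this bound is propagated inside $\Omega\times(0,T)$ by the parabolic maximum principle. Combined with the refined spatial information that the maximum of $|\nabla u(\cdot,t)|$ is attained in a boundary layer of width $\simeq M(t)^{-(p-1)+\mathcal{O}(\eps)}$, where $M(t):=\|\nabla u(t)\|_\infty$, one derives a Bernoulli-type differential inequality of the form
$$M'(t)\ge c\,M(t)^{p-1-\eps'},$$
which, integrated between $t$ and $T$, gives $M(t)\le C_\eps(T-t)^{-1/(p-2)-\eps''}$ with $\eps''\to 0$ as $\eps\to 0$; this is the claimed upper bound.

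The main obstacle will be the first step: verifying that the $\eps$-inflated exponent in $w_\eps$ produces enough slack to strictly dominate the indefinite curvature term $\Delta d_\Omega$, together with the cross terms arising when $w_\eps$ is localized to a tubular neighborhood of $\partial\Omega$. Once this modified Bernstein step is secured, the temporal step should be a direct transcription of the convex case in Theorem~\ref{Thmain1}, with the $\eps$ loss propagating transparently to the final rate.
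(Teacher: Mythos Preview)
Your plan has a genuine gap at the first step, and this stems from a misreading of where convexity actually enters in the paper.

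\medskip
\textbf{The Bernstein step is misconceived.} The sharp gradient estimate \eqref{estconvgradest} is \emph{not} proved in the paper via a maximum principle applied to $w=|\nabla u|^2-A\,d_\Omega^{-2/(p-1)}$ together with the sign of $\Delta d_\Omega$; convexity enters only through the Lasry--Lions scaling argument (Lemma~\ref{lemutau}) which bounds the \emph{tangential} gradient $|\nabla_\tau u|\le C d_\Omega^{-1/2}$, after which interpolation and an ODE in the normal direction give the sharp constant $d_p$. More importantly, for an arbitrary smooth domain one already has, by \eqref{FiPucciSouplet}, the estimate
\[
|\nabla u|\le (1+\eta)d_p\,d_\Omega^{-\beta}+C(\eta)\quad\text{in }\Omega\times(0,T)
\]
for every $\eta>0$ --- i.e.\ the \emph{correct} boundary exponent with an arbitrarily small loss in the \emph{constant}. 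Your proposed bound $|\nabla u|\le C_\eps d_\Omega^{-\beta-\eps}$ is strictly weaker than this, and proving it would be a step backward. Worse, a loss in the exponent rather than the constant would destroy the temporal argument: the quantity $d_\Omega G^{p-1}$ (with $G=|\nabla u|$), which must stay bounded for the algebraic analysis of the polynomial $F(X)$ in Proposition~\ref{generalcompu2} to go through, would instead blow up like $d_\Omega^{-(p-1)\eps}$ near the boundary.

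\medskip
\textbf{What the paper actually does.} The paper feeds the $\eta$-loss in the constant of \eqref{FiPucciSouplet} directly into the construction of Proposition~\ref{generalcompu2}(i). This forces the auxiliary function to be modified from the sharp form $u^{p-1}d_\Omega^{2-p}$ to
\[
H_1=u^{p-1-\kappa}d_\Omega^{2-p+\kappa}\bigl(1+u^\kappa\bigr),
\]
with $\kappa>0$ small (note: built from $u$, not $|\nabla u|$). The loss $\eta$ in the gradient constant is precisely what is absorbed by the extra $\kappa$ in the exponents; see the computations in Cases~2.2, 2.3, 3.2, 3.3 of the proof of Proposition~\ref{generalcompu2}, where $\eta$ is chosen proportional to $\kappa$. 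The maximum principle applied to $J=u_t-\eps H_1$ then yields $\partial_t u_\nu\ge \eps (u_\nu)^{p-1-\kappa}$ on $\partial\Omega$, which integrates to the rate $(T-t)^{-1/(p-2-\kappa)}$. Since $\kappa>0$ is arbitrary, this is the claimed $\eps$-loss upper bound. Your sketch of the temporal step (boundary-layer width, functional in $|\nabla u|$, Bernoulli inequality for $M(t)$) does not match this mechanism and is too vague to evaluate on its own.
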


\medskip

\begin{remark}\label{remGBU}
(a) It is 
an open problem whether or not Theorems \ref{Thmain1} and \ref{Thmain1b} remain true for $p>3$ and $n\ge 2$
(for nonradial solutions).
The restriction $p\le 3$ enters in the construction of our key auxiliary function (see Case~2 of the proof of Proposition~\ref{generalcompu2}).
Actually, for $p>3$, our method would allow to obtain an upper GBU rate estimate with an exponent bigger than  $1/(p-2)$. 
However, due to the gap between the upper and lower estimates in this case, and in order not to further increase the technicality of the article, we have refrained from expanding on this.

We nevertheless remark that the restriction $p\le 3$ has appeared before in some other results
on the diffusive Hamilton-Jacobi equation (see \cite[Section~IV.3]{LasryLions}, \cite{PSpro},  and cf.~also \cite{FLa})
 and that the question whether the exponent $p=3$ in those works plays a genuine critical role,
or whether such restrictions are technical, remains unclear.

\smallskip
(b) A sufficient condition for $u$ to satisfy the monotonicity assumption \eqref{hypmonot}
in Theorems \ref{Thmain1} and \ref{Thmain1b} is to take initial data $u_0\in X\cap C^2(\Omega)$ such that
\begin{equation}\label{hypmonot2}
\Delta u_0+|\nabla u_0|^p+h\ge 0 \quad\hbox{ in $\Omega$}
\end{equation}
(see for instance \cite[Section 52]{QSbook07}).
We note in particular that, for any given $u_0\in X\cap C^2(\Omega)$, \eqref{hypmonot2} along with $T(u_0)<\infty$
 is easily satisfied by taking $0\le h\in C^1(\overline\Omega)$ suitably large.
 \smallskip
 
(c) Theorems \ref{Thmain1} and \ref{Thmain1b} remain true (same proof) if we replace assumption \eqref{hypmonot} with the weaker property that
\begin{equation}\label{hypmonot3}
u_t\ge 0 \quad\hbox{ in $(T-\eta,T)\times \Omega_\eta$}
\end{equation}
 for some $\eta>0$, where $\Omega_\eta:=\left\{x\in \Omega,\ {\rm dist}(x, \partial\Omega)<\eta\right\}$.
This is of interest in the homogeneous case $h\equiv 0$, where no GBU solution of \eqref{maineq} can satisfy property \eqref{hypmonot}
 (indeed, this would imply $-\Delta u\le |\nabla u|^p$, with zero boundary conditions, hence $u\le 0$ by the maximum principle).
 However, it is a nontrivial task to verify \eqref{hypmonot3}.
By using the results in the present paper, combined with the techniques in \cite{PS3},
the existence of some classes of (radial and nonradial) GBU solutions of \eqref{maineq} in a ball for $h\equiv 0$ and $p\in (2,3]$,
satisfying \eqref{hypmonot3} and the sharp blowup rate \eqref{GBUratecC},
will be established in the forthcoming publication \cite{AttPrep}.
\end{remark}

\subsection{More singular GBU rates for solutions without time monotonicity}

Our second result concerns more singular rates for solutions without time monotonicity.
To this end let us recall the notion of {\it minimal GBU solution}:

\begin{definition}\label{defmin}
A solution $u$ of \eqref{maineq} is called a minimal GBU solution if  
 $T(u_0)<\infty$ and every initial data 
 $v_0\in X$ such that $v_0\leq u_0$ and $v_0\not\equiv u_0$ gives rise to a global classical solution, i.e. $T(v_0)=\infty$.
\end{definition}

The existence and properties of minimal GBU solutions were studied in \cite{PS3, FPS19}.
In particular the following was shown in \cite{PS3}.

\begin{proposition}\label{propmin}
Let $p>2$, $h=0$ and let $\phi\in X$, $\phi\ne 0$. 
Set 
$$\lambda^*=\sup\{\lambda\ge 0;\ T(\lambda\phi)=\infty\}.$$
Then $\lambda^*\in(0,\infty)$ and $T(\lambda^*\phi)<\infty$.
Moreover, the solution with initial data $\lambda^*\phi$ is a minimal GBU solution.
\end{proposition}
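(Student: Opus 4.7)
The plan is to prove $\lambda^*\in(0,\infty)$ via standard small- and large-data arguments, establish the monotonicity of $T(\lambda\phi)$ in $\lambda$, and then deduce that $u_{\lambda^*\phi}$ blows up and is minimal.

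For $\lambda^*>0$, I would invoke small-data global existence: for $\lambda$ sufficiently small, $\|\lambda\phi\|_{C^1}$ lies below the threshold ensuring $T(\lambda\phi)=\infty$ (see e.g.\ \cite{souplet2002, QSbook07}). For $\lambda^*<\infty$, I would use a large-data GBU criterion such as the eigenfunction method of \cite{souplet2002} or the criteria in \cite{Alaa, ABG}. For the monotonicity, given $0\le\lambda_1<\lambda_2$, set $w=u_{\lambda_2}-u_{\lambda_1}$; using a mean value identity for $\xi\mapsto|\xi|^p$, $w$ satisfies $w_t-\Delta w-b(x,t)\cdot\nabla w=0$ with bounded $b$ as long as both solutions are classical, together with $w|_{\partial\Omega}=0$ and $w(0)=(\lambda_2-\lambda_1)\phi\ge 0$. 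The strong maximum principle and Hopf lemma yield $w>0$ in $\Omega$ and a strict inequality of inward normal derivatives on $\partial\Omega$. Since GBU can only take place on $\partial\Omega$ and $|\nabla u_{\lambda_1}|\le|\nabla u_{\lambda_2}|$ there, GBU of $u_{\lambda_1}$ forces GBU of $u_{\lambda_2}$ no later, so $T(\lambda_2\phi)\le T(\lambda_1\phi)$. Thus $\{\lambda:T(\lambda\phi)=\infty\}$ is an interval of the form $[0,\lambda^*)$ or $[0,\lambda^*]$.

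To prove $T(\lambda^*\phi)<\infty$, I would argue by contradiction. If $T(\lambda^*\phi)=\infty$, then $\|u_{\lambda^*}(t)\|_\infty\le\|\lambda^*\phi\|_\infty$ (comparison with constant supersolutions), and using the dissipative structure of \eqref{maineq} for $h=0$ — notably the fact that the only non-negative bounded stationary solution of $-\Delta u=|\nabla u|^p$ with $u=0$ on $\partial\Omega$ is $u\equiv 0$ — a standard attractor/regularity argument combined with parabolic regularity yields $\|u_{\lambda^*}(t)\|_{C^1(\bar\Omega)}\to 0$ as $t\to\infty$. Pick $t_0$ so large that $u_{\lambda^*}(t_0)$ lies below the small-data global existence threshold; by continuous dependence on initial data on the fixed interval $[0,t_0]$, for $\lambda>\lambda^*$ close enough to $\lambda^*$, $u_\lambda$ exists on $[0,t_0]$ with $u_\lambda(t_0)$ close to $u_{\lambda^*}(t_0)$ in $C^1$, and hence still below threshold. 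Restarting from $t_0$ then forces $T(\lambda\phi)=\infty$, contradicting $\lambda>\lambda^*$.

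For the minimality, let $v_0\in X$ with $v_0\le\lambda^*\phi$ and $v_0\not\equiv\lambda^*\phi$, and fix $t_0\in(0,T(\lambda^*\phi))$. The monotonicity machinery applied to $w=u_{\lambda^*}-v\ge 0$ gives $w(t_0)>0$ in $\Omega$ and a strict Hopf inequality on $\partial\Omega$, so the continuous extension to $\bar\Omega$ of $r(x):=v(t_0,x)/u_{\lambda^*}(t_0,x)$ satisfies $\mu:=\max_{\bar\Omega}r<1$ by compactness, giving $v(t_0)\le\mu u_{\lambda^*}(t_0)$. By continuous dependence combined with Hopf (to control the ratio near $\partial\Omega$), $u_\lambda(t_0)/u_{\lambda^*}(t_0)\to 1$ uniformly on $\bar\Omega$ as $\lambda\to(\lambda^*)^-$, so for some $\lambda<\lambda^*$ close enough we have $u_\lambda(t_0)\ge\mu u_{\lambda^*}(t_0)\ge v(t_0)$. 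Since $T(\lambda\phi)=\infty$, comparison from $t_0$ then forces $T(v_0)=\infty$. The main obstacle is in the preceding step: justifying that the putative global classical solution $u_{\lambda^*}$ actually decays to $0$ in $C^1(\bar\Omega)$, which relies on ruling out nontrivial non-negative steady states and on a non-circular use of continuous dependence to propagate smallness into global existence for slightly supercritical $\lambda$.
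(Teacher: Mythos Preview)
The paper does not prove Proposition~\ref{propmin}: it is quoted as a known result from \cite{PS3} (see the sentence ``In particular the following was shown in \cite{PS3}'' immediately preceding the statement). There is therefore no proof in the present paper to compare your attempt against.

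That said, your sketch is essentially the standard argument (and is close to what is done in \cite{PS3}): small-/large-data criteria give $\lambda^*\in(0,\infty)$; comparison plus Hopf give the monotonicity $\lambda\mapsto T(\lambda\phi)$ and the boundary-gradient ordering, which together with the fact that $\sup_\Omega|\nabla u|$ is controlled by $\sup_{\partial\Omega}u_\nu$ (cf.\ the estimate recalled at the end of Section~\ref{sect3}) yields $T(\lambda_2\phi)\le T(\lambda_1\phi)$; and the squeezing $v(t_0)\le \mu\,u_{\lambda^*}(t_0)\le u_\lambda(t_0)$ for some $\lambda<\lambda^*$, obtained via the strong maximum principle, Hopf, and $C^1$ continuous dependence, gives minimality. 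The one substantive input you flag as an obstacle---that a putative global classical solution with $h\equiv0$ decays to $0$ in $C^1(\overline\Omega)$---is indeed the nontrivial step; it is available in the literature (uniform $C^1$ bounds for bounded global solutions from \cite{ABS,SZ}, uniqueness of the trivial steady state, and convergence results as in \cite{PZ,BdaLio}), but it is not proved in this paper either. With that input granted, your argument is sound.
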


Under additional assumptions (cf.~\eqref{hypID1a}-\eqref{hypID2a} below), it was shown in \cite{PS3} that minimal GBU solutions are 
immediately regularized and then remain classical forever.
In that sense, they can be seen as an analogue of the {\em peakings solutions} for the semilinear heat equation 
\be\label{NLH}
u_t-\Delta u=u^p
\ee
(see, e.g., \cite{GV97, FMP, MatanoMerle}).
These peaking solutions  blow up only at  
one instant of time, and have a classical continuation afterwards. 
They represent a transient and minimal form of blow-up.

Now for $n=1$ and $\Omega=(0,1)$, consider the class of initial data $u_0\in W^{3,\infty}(0,1)$  
satisfying the following properties:
\be\label{hypID1a}
\hbox{$u_0$ is symmetric w.r.t. $x=\frac12$,\ \ $u_0'\ge 0$ on $[0,\frac12]$,\ \  $u_0(0)=u_0''(0)+{u_0'}^p(0)=0$,}
\ee
\be\label{hypID2a}
\hbox{there exists $a\in (0,1/2)$ such that } 
u_0''+{u_0'}^p
\begin{cases}
\,\ge 0 & \hbox{on $[0,a]$} \\
\,\le 0 & \hbox{on $[a,\frac12]$.} \\
\end{cases}
\ee
It was shown in \cite[Theorem~2.6]{PS3} that for any such $u_0$, if the corresponding solution is minimal,
then its GBU rate satisfies \eqref{upperGBUinfty}.\footnote{Assumptions \eqref{hypID1a}-\eqref{hypID2a} are motivated by intersection-comparison or zero-number arguments crucially used in the proof.}
However, no precise rate estimate was obtained.
The following result improves \cite[Theorem~2.6]{PS3}.

\begin{theorem} \label{thmfastratemain}
Let $p>2$, $h\equiv 0$ and $\Omega=(0,1)$. Let $u_0\in W^{3,\infty}(0,1)$ satisfy \eqref{hypID1a}-\eqref{hypID2a}.
Assume that $T=T(u_0)<\infty$ and that $u$ is a minimal blowup solution.
Then there exists a constant $C>0$ such that
\be \label{uxmoresingular}
u_x(0,t)\ge C(T-t)^{-2/(p-2)},\quad T/2<t<T.
\ee
\end{theorem}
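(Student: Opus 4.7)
Set $M(t) := u_x(0,t)$. The lower bound \eqref{lowerGBU} already gives $M(t) \ge c(T-t)^{-1/(p-2)}$, and \cite[Theorem~2.6]{PS3} shows the qualitative improvement $(T-t)^{1/(p-2)}M(t)\to\infty$; the task is to quantify this divergence with the sharp Type~II exponent $2/(p-2)$. My first step would be to extract the structural input from \eqref{hypID2a} by applying the Sturmian zero-number principle to $w := u_t$, which satisfies a linear parabolic equation with zero Dirichlet data at $x=0$ and, by the symmetry \eqref{hypID1a}, at $x=1/2$. Since $w(\cdot,0)$ has exactly one sign change on $(0,1/2)$, the number of sign changes of $w(\cdot,t)$ cannot increase, so there exists a curve $a(t)\in(0,1/2)$ with $u_t\ge 0$ on $[0,a(t)]$ and $u_t\le 0$ on $[a(t),1/2]$. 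In particular $M(t)$ is non-decreasing.

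The rate \eqref{uxmoresingular} is predicted by a matched-asymptotic picture. On the boundary layer $[0,a(t)]$ the solution should be close to the singular stationary profile $V_k(x):=\int_0^x(k^{-(p-1)}+(p-1)\eta)^{-1/(p-1)}\,d\eta$ with $k=M(t)$, which satisfies $-V_k''=(V_k')^p$ and $V_k'(0)=k$. A direct computation (substituting $\eta=k^{-(p-1)}\xi$) yields $\partial_k V_k(x)\sim k^{-(p-1)}$ uniformly for $xk^{p-1}\gtrsim 1$. Differentiating $u\approx V_{M(t)}$ formally in time then gives $u_t\approx M'(t)\,M(t)^{-(p-1)}$ on the inner layer. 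On the other hand, the minimality of $u$ should force the effective driving from the bulk to vanish linearly at $t=T$, i.e.\ $u_t = O(T-t)$ near the matching point $x=a(t)$. Combining these two yields the upper bound $M'(t) \lesssim M(t)^{p-1}(T-t)$ for $t$ close to $T$, which integrates (using $M(t)\to\infty$) to $M(t)\gtrsim (T-t)^{-2/(p-2)}$.

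To make the heuristic rigorous, I would construct an auxiliary functional of the form $J(x,t) = u_t(x,t) - \eta\,(T-t)\,\Phi(u_x(x,t))$ with $\Phi$ chosen so that the relation $\Phi(s)\approx s^{p-1}$ holds with sharp constants, and show by a parabolic maximum-principle argument that $J\le 0$ on $\{0<x<a(t),\ t_0<t<T\}$ for some $t_0$ close to $T$ and small $\eta>0$. The sign of $J$ on the parabolic boundary would be controlled by the sharp Bernstein-type estimate \eqref{GBUratecC} (which pins down $u_x$ quantitatively near $x=0$), the sign of $u_t$ from the zero-number step, and the vanishing $u_t(a(t),t)=0$. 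Once $J\le 0$ is established, differentiating at $x=0$ with $M'(t)=\partial_x u_t(0,t)$ and $u_{xx}(0,t)=-M(t)^p$ delivers the desired differential inequality for $M$. A possible alternative is a contradiction-and-rescaling argument exploiting the scaling invariance $u\mapsto Au$, $x\mapsto A^{-(p-1)/(p-2)}x$, $t\mapsto A^{-2(p-1)/(p-2)}t$ with $A=M(t_n)^{p-2}$: under the negation of \eqref{uxmoresingular}, the rescaled solutions have a forward time horizon tending to infinity and, by \eqref{GBUratecC} and parabolic regularity, converge to a smooth limit $V$ with $V_s\ge 0$ and $V_y\le d_p y^{-1/(p-1)}$, whose stationary $s\to\infty$ limit would yield a persistent singular boundary layer that violates the minimality via a perturbation argument based on Proposition~\ref{propmin}.

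The main obstacle I anticipate is the justification of the weight $(T-t)$ in $J$ (equivalently, the linear vanishing of the bulk driving): this is precisely what turns the Type~I exponent $1/(p-2)$ into the Type~II exponent $2/(p-2)$, and it seems to require a delicate interplay between the minimality of $u$, the zero-number/monotonicity structure of $u_t$ from the first step, and the sharp Bernstein estimate \eqref{GBUratecC}. Handling the moving inner-outer interface $x=a(t)$ (and making the formal matching of inner and outer expansions quantitative without additional regularity of $u_0$ beyond $W^{3,\infty}(0,1)$) is the secondary technical hurdle.
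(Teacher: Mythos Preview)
Your target differential inequality $M'(t)\lesssim (T-t)M(t)^{p-1}$ is exactly right, and you correctly see that the factor $(T-t)$ is what upgrades the exponent from $1/(p-2)$ to $2/(p-2)$. But the route you propose to reach it is considerably more elaborate than what is actually needed, and the obstacles you flag (the auxiliary functional $J$, the moving interface $a(t)$, the matched asymptotics) simply do not arise in the paper's argument.

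The paper obtains the bound $u_t\le M(T-t)$ on all of $[0,1]\times[t_0,T)$ in two lines: first, $w=u_{tt}$ satisfies $w_t-w_{xx}\ge p|u_x|^{p-2}u_x w_x$ with zero boundary data, so the maximum principle gives $u_{tt}\ge -M$; second, minimality (via \cite[Proposition~7.1]{PS3}) gives $u_t(\cdot,T)\le 0$, and integrating $u_{tt}\ge -M$ backwards from $T$ yields $u_t(x,t)\le M(T-t)$. No auxiliary functional, no inner/outer matching, no sharp Bernstein constant is involved. With this global upper bound on $u_t$ in hand, the paper applies a quantitative semigroup gradient estimate to the linear equation $v_t-v_{xx}=p|u_x|^{p-2}u_x v_x$ satisfied by $v=u_t$ (Lemmas~\ref{gradbound1}--\ref{gradbound2}): choosing $t_0=2t-T$ so that $M_1=\sup v\le 2M(T-t)$ and $M_2=pm^{p-1}(t)$, one reads off $m'(t)=u_{tx}(0,t)\le C(T-t)m^{p-1}(t)$ directly, and integration finishes the proof.

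Two minor corrections to your sketch: the estimate you cite as \eqref{GBUratecC} is the blow-up \emph{rate} conclusion of Theorem~\ref{Thmain1} (valid only for time-increasing solutions with $p\le 3$), not a Bernstein estimate, and it plays no role here; and your proposed functional $J=u_t-\eta(T-t)\Phi(u_x)$ would face the difficulty that the $(T-t)$ factor, being $t$-dependent, generates an extra term $\eta\Phi(u_x)$ in $\mathcal P J$ with the wrong sign, so the maximum-principle step is not straightforward.
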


\subsection{Outline of proofs}
In order to prove the upper estimate in Theorem \ref{Thmain1},
we shall construct an auxiliary functional of the form
\begin{equation}\label{DefJ0}
J(x,t):= u_t-\eps u^{p-1}d_{\Omega}^{2-p}\bigr[1+d_\Omega^\kappa\bigl],\quad \hbox{ where }d_\Omega(x):=\dist(x, \partial\Omega),
\end{equation}
with $\eps,\kappa>0$ small (the case of Theorem \ref{Thmain1b} requires a slightly different version).
We note that the last term $d_\Omega^\kappa$ in \eqref{DefJ0}, which acts as a perturbation term, cannot be avoided
(see Proposition~\ref{generalcompu2} for details).
One aims at showing that $J\ge 0$ in a neighborhood of $\partial\Omega$.
Once we know that $J$ in \eqref{DefJ0} satisfies $J\ge 0$, one obtains a differential inequality in time for the normal derivative 
at any boundary point $x_0$, by dividing by 
$d_\Omega$ and letting $x\to x_0$ in the normal direction. 
The conclusion then follows rather easily after integrating this differential inequality.

To prove  that $J\ge 0$, we need to derive a suitable parabolic inequality for $J$.
This necessitates long and delicate computations.
It is remarkable that these computations, which yield the optimal {\it time} rate $1/(p-2)$,
crucially depend on the following Bernstein-type, gradient estimate {\it in space} on the solution, with {\it sharp constant}:
\begin{equation}\label{Bernstein00}
|\nabla u|\le  d_\Omega^{-1/(p-1)}\bigl(d_p+C d_\Omega^\alpha\bigr)\ \ \text{ in } \Omega\times(0,T),\qquad d_p=(p-1)^{-1/(p-1)},
\end{equation}
for some $C,\alpha>0$ (whereas a constant larger than $d_p$ in \eqref{Bernstein00} 
would only yield a nonoptimal time rate exponent, larger than $1/(p-2)$).
The estimate \eqref{Bernstein00}, which is of independent interest and was not known before except for very special cases, 
is also established in this paper, in particular for any convex domain and $p\in(2,3)$.
This close connection between the temporal and spatial estimates \eqref{GBUratecC} 
and \eqref{Bernstein00} seems to be a completely new observation.
 
Let us observe that the functional $J$ in \eqref{DefJ0} has a more involved form than the
functional $J=u_t-\eps u^p$, used in the classical work \cite{FMcleod} (see also \cite{Sp,QSbook07}) to establish the blow-up rate
for time increasing solutions of the semilinear heat equation \eqref{NLH}.
This may be seen as a counterpart of the strong difference in the nature of blow-up between equations \eqref{maineq} and \eqref{NLH}
(boundary gradient blow-up vs.~$L^\infty$ blow-up).
We point out that the functional $J$ in \eqref{DefJ0} is also quite different from the one-dimensional functional
$$G(x,t)= u_t-\eps\Bigr[u+\bigl(1+u_x^{-\sigma}(0,t)\bigr)\bigl(1-\textstyle\frac{u_x(x,t)}{u_x(0,t)}\bigr)\Bigl],$$
used in \cite{GH} for problem \eqref{maineqinhom} on $\Omega=(0,1)$. An advantage of the functional $G$ is that it works for any $p>2$,
but there seems to be no way to use a functional of this type in higher dimensions (except for radial solutions in a ball; cf.~\cite{ZhangHu}).

As for the proof of Theorem~\ref{thmfastratemain}, we use some refinements of the arguments from \cite{PS3},
which were based on zero-number properties of $u_t$ combined with gradient estimates for linear parabolic equations with drift.
Here, a new ingredient is the observation that, under the assumptions of Theorem~\ref{thmfastratemain},
the minimal solution actually satisfies $u_t\le M(T-t)$ and not just $u_t\le M$ (see Lemma~\ref{boundforut}).

The rest of the paper is organized as follows. In section 2, we set notation and gather
a number of preliminary properties.
In section 3, we establish the required sharp, Bernstein-type, gradient estimates on the solutions.
In section 4, we construct the key auxiliary functions with the help of the above gradient estimates,
and then use these auxiliary functions to prove Theorems~\ref{Thmain1} and \ref{Thmain1b}.
Finally, section 5 is devoted to the proof of Theorem~\ref{thmfastratemain}.

\section{Notation and preliminaries}\label{sect2}
We denote by $\nu(x)$ the inward unit normal vector at any point $x\in\partial\Omega$.  
We set 
$$d_\Omega(x):=\dist(x, \partial\Omega),\quad x\in \Omega,$$
and
 $$\Omega_\delta:=\left\{x\in \Omega, d_\Omega(x) < \delta\right\},\quad \delta>0.$$
The smoothness of $\partial\Omega$ implies that $d_\Omega$ is smooth in a neighborhood of the boundary, 
that is there exists $\delta_0>0$ such that $d_\Omega\in C^2(\overline{\Omega_{\delta_0}})$. Moreover, we have 
$|\nabla d_\Omega|=1$.
 Next it is well known that the inward unit normal vector field on $\partial\Omega$ can be extended to $\Omega_{\delta_0}$ by setting
$$\nu=\nabla d_\Omega.$$
For convenience, we may actually assume that $\nu$ is the restriction to $\Omega_{\delta_0}$ 
of a $C^2$ vector field defined on the whole $\overline\Omega$,
still denoted by $\nu$ without risk of confusion.
Such a vector field exists by a standard cut-off argument (taking $\delta_0>0$ smaller if necessary),
but of course it is not assumed to satisfy $|\nu|=1$ outside of $\Omega_{\delta_0}$.
Throughout the paper, we shall then use the notation
\begin{equation}\label{defunu}
u_\nu=\nu\cdot\nabla u \quad\hbox{ in $\overline\Omega\times [0,T)$.}
\end{equation}

We next define the main constants used in this paper:
\begin{align}
\beta&=\dfrac{1}{p-1},\\
c_p&=\dfrac{\beta^\beta}{1-\beta }=\dfrac{p-1}{p-2} (p-1)^{-\frac{1}{p-1}}, \label{defcp} \\
d_p&=\beta^\beta=(1-\beta)c_p=(p-1)^{-\frac{1}{p-1}},
\end{align}
and recall some basic estimates of solutions of \eqref{maineq} (see, e.g., \cite[Propositions~2.3 and 2.4]{SZ})
that are consequences of the maximum principle: 
\begin{equation}\label{bounduMP}
\|u(\cdot,t)\|_\infty\le \|u_0\|_\infty+t\|h\|_\infty,\quad 0<t<T,
\end{equation}
and
\begin{equation}\label{boundutMP}
\|u_t(\cdot,t)\|_\infty\le M_0:=\|u_t(\cdot, T/2)\|_\infty,\quad T/2<t<T.
\end{equation}

We next recall the following fractional Gagliardo-Nirenberg type interpolation inequality:
\begin{equation}\label{estimGN1}
\|\phi \|_{W^{1,\infty}(\Omega)} \leq C_0 \|\phi \|_{W^{s,q}(\Omega)}^{\mu} \| \phi \|_{L^q(\Omega)}^{1-\mu},
\quad \phi\in W^{s,q}(\Omega)
\end{equation}
valid for any $s,q\in (1,\infty)$ such that
\begin{equation}\label{estimGN2}
\mu\in (\textstyle\frac{1}{s},1),\quad q>\frac{n}{\mu s-1}.
\end{equation}
It follows by combining the usual fractional interpolation inequality (see \cite{BM} and the references therein)
$$
\|\phi \|_{W^{\mu s,q}(\Omega)} \leq C \|\phi \|_{W^{s,q}(\Omega)}^{\mu} \| \phi \|_{L^q(\Omega)}^{1-\mu},
\quad \phi\in W^{s,q}(\Omega),$$
valid for any $\mu\in(0,1)$, with the Morrey-Sobolev inequality
$$
\|v\|_{L^\infty(\Omega)} \leq C\|v\|_{W^{k,q}(\Omega)},
\quad v\in W^{k,q}(\Omega),$$
valid for any $k>n/q$, and applied with $k=\mu s-1$.

We finally give the following useful weighted $L^q$ parabolic regularity result.

\begin{lemma} \label{lemweightedregul}
Let $0<T_1<T_2$, $q\in (1,\infty)$ and set $Q=\Omega\times(0,T_2)$, $Q'=\Omega\times(T_1,T_2)$.
Let $\gamma>0$ and assume that $z\in C^{2,1}(Q)\cap C(\overline\Omega\times(0,T_2))$ satisfies
\begin{equation}\label{HypweightedD2}
d_\Omega^\gamma (z_t-\Delta z),\ d_\Omega^{\gamma-1} \nabla z \ \hbox{ and }\ d_\Omega^{\gamma-2} z\in L^q(Q).
\end{equation}Then
\begin{equation}\label{weightedD2}
d_\Omega^\gamma D^2z \ \hbox{ and }\ d_\Omega^\gamma z_t\in L^q(Q').
\end{equation}
\end{lemma}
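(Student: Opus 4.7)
The plan is to reduce Lemma \ref{lemweightedregul} to classical interior $L^q$ parabolic regularity (Calderón--Zygmund estimates for the heat operator) by means of a Whitney-type covering of $\Omega$ by balls whose radii are comparable to the distance to the boundary, combined with parabolic scaling that converts the $d_\Omega$ weight into an explicit constant on each piece of the cover.

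First, I would recall the interior estimate I intend to use: if $w\in C^{2,1}$ satisfies $w_t-\Delta w=g$ on the unit parabolic cylinder $Q_1^-=B_1(0)\times(-1,0]$, then
\begin{equation*}
\|D^2 w\|_{L^q(Q_{1/2}^-)}+\|w_t\|_{L^q(Q_{1/2}^-)}
\le C\bigl(\|g\|_{L^q(Q_1^-)}+\|\nabla w\|_{L^q(Q_1^-)}+\|w\|_{L^q(Q_1^-)}\bigr),
\end{equation*}
with $C=C(n,q)$. Rescaling this to a parabolic cylinder of radius $R$ yields the $R$-dependent factors $R^{-1}$ in front of $\nabla w$ and $R^{-2}$ in front of $w$, which is exactly what we want to match the weights $d_\Omega^{\gamma-1}$ and $d_\Omega^{\gamma-2}$ appearing in \eqref{HypweightedD2}.

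Second, I would choose a Whitney-type covering of $\Omega$: a countable family $\{x_i\}\subset\Omega$ and radii $r_i=\kappa\, d_\Omega(x_i)$ (for a small fixed $\kappa\in(0,1)$, say $\kappa=1/16$) such that the balls $B(x_i,r_i)$ cover $\Omega$ and the dilated balls $B(x_i,2r_i)$ have bounded overlap (depending only on $n$). On each $B(x_i,2r_i)$ one has $d_\Omega\simeq d_\Omega(x_i)=r_i/\kappa$ with constants depending only on $\kappa$. For the time variable I would set, for each $i$, $\tau_i=\min(r_i,\sqrt{T_1/2})$ so that the parabolic cylinders $Q_i=B(x_i,2r_i)\times(s-4\tau_i^2,s]$ and half-cylinders $Q_i^{1/2}=B(x_i,r_i)\times(s-\tau_i^2,s]$ are contained in $Q$ whenever $s\in(T_1,T_2)$; for points $x_i$ with $d_\Omega(x_i)$ bounded below by a positive constant, $r_i$ is replaced by a uniform constant comparable to $\min(\delta_0,\sqrt{T_1})$ (this covers the "interior region" where the weight is harmless).

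Third, I would apply the rescaled interior $L^q$ estimate to $z$ on each $Q_i$, producing on the left $\|D^2 z\|_{L^q}^q+\|z_t\|_{L^q}^q$ over $Q_i^{1/2}$, and on the right $\|z_t-\Delta z\|_{L^q(Q_i)}^q+r_i^{-q}\|\nabla z\|_{L^q(Q_i)}^q+r_i^{-2q}\|z\|_{L^q(Q_i)}^q$. I would then multiply through by $r_i^{\gamma q}$, replace $r_i$ by $d_\Omega$ on each cylinder at the cost of a constant, integrate over $s\in(T_1,T_2)$, sum over $i$, and use the bounded overlap of $\{Q_i\}$ to land on
\begin{equation*}
\int_{Q'}d_\Omega^{\gamma q}\bigl(|D^2 z|^q+|z_t|^q\bigr)
\le C\int_{Q}\Bigl(d_\Omega^{\gamma q}|z_t-\Delta z|^q+d_\Omega^{(\gamma-1)q}|\nabla z|^q+d_\Omega^{(\gamma-2)q}|z|^q\Bigr),
\end{equation*}
which is \eqref{weightedD2}.

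The main obstacle is purely bookkeeping: matching the powers of $r_i$ in the rescaled Calderón--Zygmund estimate with the exponents $\gamma-1,\gamma-2$ of $d_\Omega$ in \eqref{HypweightedD2}, while simultaneously handling two separate regimes (points near $\partial\Omega$, where the parabolic cylinder scales as $d_\Omega(x_i)$, versus interior points, where one uses a fixed parabolic cylinder of size controlled by $\sqrt{T_1}$). The gap $T_2-T_1>0$ is essential in the latter regime since it ensures the parabolic cylinders used near the "initial" slice $t=T_1$ still fit inside $Q$; no assumption on initial or lateral boundary data is needed because the interior Calderón--Zygmund estimate is purely local.
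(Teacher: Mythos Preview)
Your Whitney--decomposition approach is correct and is a standard way to prove weighted $L^q$ bounds of this type; the bookkeeping you flag (the two regimes near and far from $\partial\Omega$, and the time covering) is routine. One small clarification: rather than ``integrating over $s\in(T_1,T_2)$'', it is cleaner to cover $(T_1,T_2)$ by intervals of length $\tau_i^2$ with bounded overlap for each $i$, so that the space--time cylinders $Q_i^{1/2}$ directly cover $Q'$ with bounded overlap; this avoids an extra Fubini step.

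The paper takes a genuinely different, shorter route. Instead of localizing, it sets $\phi=d_\Omega^\gamma z$ near the boundary, expands
\[
(\partial_t-\Delta)\phi=d_\Omega^\gamma(z_t-\Delta z)-2\nabla(d_\Omega^\gamma)\cdot\nabla z-\Delta(d_\Omega^\gamma)\,z,
\]
and observes that the right-hand side is controlled pointwise by $d_\Omega^\gamma|z_t-\Delta z|+Cd_\Omega^{\gamma-1}|\nabla z|+Cd_\Omega^{\gamma-2}|z|$, hence lies in $L^q(Q)$ by hypothesis. Since $\phi$ vanishes on $\partial\Omega$, a single application of interior--boundary $L^q$ parabolic regularity yields $D^2\phi,\phi_t\in L^q(Q')$ near the boundary, and one then recovers $d_\Omega^\gamma D^2z$ from $D^2\phi$ via the Leibniz identity. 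Your method trades this global trick for purely interior Calder\'on--Zygmund estimates: it needs no regularity of $d_\Omega$ beyond the comparability $d_\Omega\simeq d_\Omega(x_i)$ on Whitney balls, and no boundary condition for $z$ or $\phi$, so it generalizes immediately to rough domains or variable-coefficient operators. The paper's argument, by contrast, is shorter and avoids the covering altogether, but relies on $d_\Omega\in C^2(\overline{\Omega_{\delta_0}})$ (available here since $\partial\Omega$ is $C^{2+\rho}$) and on up-to-the-boundary estimates.
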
 

The result is probably known but we give the short proof for completeness.

\begin{proof} 
Setting $Q_0:=\Omega_{\delta_0}\times (0,T_2)$, $H:=\partial_t-\Delta$ and $\phi:=d_\Omega^\gamma z \in C^{2,1}(Q_0)$.
We compute
\begin{align*}
H\phi
&=d_\Omega^\gamma Hz-2\nabla(d_\Omega^\gamma)\cdot\nabla z-z\Delta (d_\Omega^\gamma) \\
&=d_\Omega^\gamma Hz-2\gamma d_\Omega^{\gamma-1}\nabla d_\Omega\cdot\nabla z
-\gamma\bigl[d_\Omega^{\gamma-1}\Delta d_\Omega+(\gamma-1) d_\Omega^{\gamma-2}|\nabla d_\Omega|^2\bigr]z
\end{align*}
in $Q_0$. Therefore,
$$|H\phi|\le d_\Omega^\gamma|Hz|+Cd_\Omega^{\gamma-1}|\nabla z|+Cd_\Omega^{\gamma-2}|z|,$$
hence $H\phi \in L^q(Q)$ by our assumption. Since $\phi=0$ on $\partial\Omega\times (0,T)$,
it follows from standard interior-boundary $L^q$ parabolic regularity that
$\phi_t=d_\Omega^\gamma z_t\in L^q(Q_1)$ and $D^2\phi\in L^q(Q_1)$,
where $Q_1:=\Omega_{\delta_0/2}\times (T_1,T_2)$. On the other hand, writing
$$(d_\Omega^\gamma z)_{ij}=(d_\Omega^\gamma)_{ij} z+(d_\Omega^\gamma)_i z_j+(d_\Omega^\gamma)_j z_i+d_\Omega^\gamma z_{ij},$$
we get 
$$d_\Omega^\gamma |z_{ij}|\le |(d_\Omega^\gamma z)_{ij}|+Cd_\Omega^{\gamma-1}|\nabla z|+Cd_\Omega^{\gamma-2}|z|
\in L^q(Q_1).$$
Since \eqref{HypweightedD2} and interior $L^q$ parabolic regularity also guarantee that $D^2z\in L^q_{loc}(\Omega\times (0,T_2])$,
the desired property \eqref{weightedD2} follows.
\end{proof}

\section{Sharp gradient estimates}\label{sect2b}
This section is devoted to gradient estimates that will play a key role in the proofs of Theorems~\ref{Thmain1} and \ref{Thmain1b}.
It has been shown in \cite[Theorem 3.2]{SZ}, by means of a local Bernstein-type argument, 
that for any maximal classical solution $u$ to \eqref{maineq}, the following 
estimate holds
\begin{equation}\label{Bernstein0}
|\nabla u|\leq C_1(p, n)d_\Omega^{-\beta}+ C_2\quad \textrm{ in}\; \Omega\times [0,T), 
\end{equation}
where $C_2=C_2(p,\Omega, \|u_0\|_{C^1}, \|h\|_{C^1})>0$.
A  more precise control on the constant $C_1$ in \eqref{Bernstein0} 
 was obtained by \cite{LiSouplet, PSpro} and lately by \cite{FPS19}. 
By the result of \cite[Theorem 1.2]{FPS19}, whose proof relies on a Liouville-type theorem and rescaling arguments, 
we have the following estimates for any smooth bounded  domain $\Omega$.
For any $\eta>0$, there exists $C=C(\eta, u_0,h, p, \Omega)>0$ such that 
\begin{equation}
\label{FiPucciSouplet}
|\nabla u|\leq (1+\eta) d_pd_{\Omega}^{-\beta}+C\quad\text{ in}\; \Omega\times (0, T).
\end{equation}
This result was stated in \cite{FPS19}
 for $h\equiv 0$  but 
 a straightforward modification of the proof gives the same conclusion in the general case. 
We stress that the upper estimate \eqref{FiPucciSouplet} is essentially optimal: indeed, it is shown in \cite{FPS19} that,  
if $a\in\partial\Omega$ is any GBU point, then
$$\lim_{s\to 0} s^\beta |\nabla u(a+s\nu_a,T)|=d_p.$$

However, the sharp value of the constant in front of the term $d_{\Omega}^{-\beta}$ of \eqref{FiPucciSouplet} will turn out to be crucial
in order to obtain the desired exponent $1/(p-2)$ in the GBU rate \eqref{GBUratecC} (and not only $1/(p-2)+\eps$ for all $\eps>0$). 
Namely, we look for the validity of estimate \eqref{FiPucciSouplet} with $\eta=0$ (up to replacing the constant $C$
by a different lower order term).
Such an estimate was obtained in \cite{PS3} in very particular situations
(namely, under suitable symmetry assumptions on the domain $\Omega\subset \R^2$ and the initial data $u_0$,
assuming in addition that $\Omega\subset\R^2_+$ has a flat part near the origin and that $u_0$ 
is sufficiently concentrated near the origin).
It turns out that we can establish the required estimate with sharp constant for any convex domain when $p\in (2,3)$.

\begin{theorem}\label{convgradest}   
Assume \eqref{hyph}, $p\in (2,3)$ and $\Omega$ convex.
   Let $u_0\in X$ be such that $T=T(u_0)<\infty$.
  Then,  for any $\alpha\in \bigl(0,\frac{3-p}{4(p-1)}\bigr)$, there exists $C>0$  
  such that
\begin{equation}\label{estconvgradest}
|\nabla u|\le  d_\Omega^{-\beta}\bigl(d_p+C d_\Omega^\alpha\bigr)
\quad\text{in}\ \Omega\times (0, T).
\end{equation}
\end{theorem}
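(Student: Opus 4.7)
The plan is to refine the Bernstein-type estimate \eqref{FiPucciSouplet} via the maximum principle applied to the auxiliary function
$$P(x,t) := d_\Omega^{2\beta}(x)\,|\nabla u(x,t)|^{2} - \bigl(d_p + C d_\Omega^\alpha(x)\bigr)^{2},$$
which is tailored so that $P\le 0$ is equivalent to \eqref{estconvgradest} in a tubular neighborhood of $\partial\Omega$. The goal is to prove $P\le 0$ in $\Omega_{\delta_0/2}\times[0,T)$ for some $C=C(u_0,h,p,\Omega,\alpha)>0$; the complementary region $\{d_\Omega\ge\delta_0/2\}$ is handled directly by \eqref{FiPucciSouplet} upon enlarging $C$.

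On the lateral boundary $\partial\Omega$, $d_\Omega=0$, so $P=-d_p^{2}<0$. On the inner face $\{d_\Omega=\delta_0/2\}\times[0,T)$, the quantity $d_\Omega^{2\beta}|\nabla u|^{2}$ is uniformly bounded thanks to \eqref{FiPucciSouplet}, and taking $C$ large forces $P\le 0$ there; the same choice makes $P(\cdot,0)\le 0$ since $u_0\in X$ has bounded gradient. It therefore suffices to rule out a positive interior maximum of $P$ in $\Omega_{\delta_0/2}\times(0,T)$.

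To compute $P_t-\Delta P$, write $v=|\nabla u|^{2}$, differentiate \eqref{maineq} to get
$$v_t-\Delta v = -2|D^{2}u|^{2} + 2p\,|\nabla u|^{p-2}\,(\nabla u)^{\!\top}D^{2}u\,\nabla u + 2\nabla u\cdot\nabla h,$$
and expand, with $|\nabla d_\Omega|=1$ near $\partial\Omega$ and the notation $\phi(d)=d^{2\beta}$, $\psi(d)=(d_p+Cd^\alpha)^{2}$,
$$P_t-\Delta P = \phi(v_t-\Delta v) - 4\phi'\,\nu\!\cdot\!D^{2}u\,\nabla u - v(\phi''+\phi'\Delta d_\Omega) + \psi''+\psi'\Delta d_\Omega.$$
At a putative positive interior maximum $(x_0,t_0)$, the identity $\nabla P=0$ gives $2\phi\,D^{2}u\,\nabla u=(\psi'-v\phi')\nu$, pinning down $D^{2}u\,\nabla u$ in both magnitude and direction. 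Using this relation and $|D^{2}u\,\nabla u|\le|D^{2}u||\nabla u|$ to absorb the Hessian-quadratic contributions into $-2|D^{2}u|^{2}$, the analysis reduces to an algebraic inequality in $d_0:=d_\Omega(x_0)$ on the level set $v=d_0^{-2\beta}(d_p+Cd_0^\alpha)^{2}$. On this set, $|\nabla u|^{p}\sim d_p^{p}\,d_0^{-p\beta}$, and the defining identity $d_p^{\,p-1}=1/(p-1)$ forces the coefficient of the leading singular term $d_0^{-2\beta-2}$ to cancel \emph{exactly}: this is precisely the algebraic reason why no constant smaller than $d_p$ would work. What survives is a subleading term of order $d_0^{-2\beta-2+\alpha}$; the convexity of $\Omega$ (so that $\Delta d_\Omega\le 0$) gives the favorable sign to $\psi'\Delta d_\Omega$ and $-v\phi'\Delta d_\Omega$, and tracking the next-order coefficient shows that for $C$ large and $\alpha$ in the stated range it is strictly negative. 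This yields $P_t-\Delta P<0$ at $(x_0,t_0)$, contradicting $P_t\ge 0$, $\Delta P\le 0$.

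The main obstacle is the delicate Taylor bookkeeping at $(x_0,t_0)$: one must check (i) that the leading $d_0^{-2\beta-2}$ coefficient cancels exactly (forcing the sharp constant $d_p$), and (ii) that the subleading $d_0^{-2\beta-2+\alpha}$ coefficient has the right sign for every admissible $\alpha$. The upper threshold $\alpha<(3-p)/(4(p-1))$ emerges from the latter balance and degenerates to $0$ as $p\uparrow 3$, which is the structural source of the restriction $p\in(2,3)$. A secondary difficulty is controlling the tangential part of the Hessian (not directly constrained by $\nabla P=0$) through $-2|D^{2}u|^{2}$; convexity of $\Omega$, combined with the already available estimate \eqref{FiPucciSouplet} used to absorb the remaining lower-order contributions and the term $2\nabla u\cdot\nabla h$, is what makes this balance close.
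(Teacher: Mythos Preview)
Your approach is genuinely different from the paper's, and it has a real gap. The paper does \emph{not} use a Bernstein-type maximum principle on a function like $P$. Instead it decouples the normal and tangential directions: first it proves $|\nabla_\tau u|\le C d_\Omega^{-1/2}$ (Lemma~\ref{lemutau}), using convexity through a scaling comparison $u_\lambda(x,t)=\lambda^{-\sigma}u(y+\lambda(x-y),\lambda^2 t)$ which stays in $\Omega$ precisely because $\Omega$ is convex; then it bounds the tangential part of the Laplacian, $|\Delta u-u_{\nu\nu}|\le C d_\Omega^{-1-\gamma}$, via higher Sobolev/interpolation estimates (Lemma~\ref{boundtautau}); finally it integrates the resulting ODE inequality $g'+g^p\le Ms^{-1-\gamma}$ for $g(s)=u_\nu(\bar x+s\nu,t)$ along each normal ray and compares with the explicit supersolution $w(s)=d_p s^{-\beta}+Ms^{-\gamma}$. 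The threshold $\alpha<\frac{3-p}{4(p-1)}$ comes out of the interpolation step, not from any sign balance in a Bernstein functional.

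The gap in your argument is the direction of $\nabla u$ at the putative maximum. From $\nabla P=0$ you obtain $D^2u\,\nabla u=\tfrac{\psi'-v\phi'}{2\phi}\nu$, so the drift contribution is
\[
2p\,\phi\,|\nabla u|^{p-2}(\nabla u)^{\!\top}D^2u\,\nabla u
= p\,|\nabla u|^{p-2}(\psi'-v\phi')\,u_\nu,
\]
which depends on $u_\nu=\nabla u\cdot\nu$, not on $|\nabla u|$. If you write $u_\nu=\theta|\nabla u|$ with $\theta\in[-1,1]$ and collect the leading $d_0^{-2}$ coefficients on the level set $\phi v=\psi$, you get (using $d_p^{\,p-1}=\beta$ and $\beta(p-1)=1$) a multiple of $2\beta\bigl[\beta(1-p\theta)+1\bigr]d_p^{2}\,d_0^{-2}$. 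This vanishes \emph{only} when $\theta=1$ and is strictly positive for any $\theta<1$. Your Bernstein setup gives no a priori control on $\theta$, so the claimed leading-order cancellation is unjustified; without it the subleading analysis never gets off the ground. To force $\theta\approx 1$ you would need exactly the kind of tangential-gradient smallness (of order $d_\Omega^{-1/2}\ll d_\Omega^{-\beta}$ for $p<3$) that the paper establishes in Lemma~\ref{lemutau} --- and that lemma is where convexity is truly used. Your invocation of convexity through $\Delta d_\Omega\le 0$ is a red herring: the curvature terms $-v\phi'\Delta d_\Omega$ and $\psi'\Delta d_\Omega$ are $O(d_0^{-1})$, hence subleading, and in fact $-v\phi'\Delta d_\Omega\ge 0$ under convexity, so it does not even have the sign you claim.
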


 Next, in the special situation when the domain is invariant under rotations, 
we have a sharp estimate similar to that in Theorem~\ref{convgradest}.
Its proof will be somewhat easier than that of Theorem~\ref{convgradest} and,
unlike the latter, it is valid for any $p>2$.

\begin{theorem}\label{shgradest}
Assume that $p>2$ and that $\Omega$ is a ball or an annular domain.
   Let $u_0\in X$ be such that $T=T(u_0)<\infty$.
  Then,  for any $\alpha\in \bigl(0,\frac{\beta}{2}\bigr)$, there exists $C>0$ such that
  \eqref{estconvgradest} holds.
  \end{theorem}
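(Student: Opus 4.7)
The plan is to carry out a Bernstein-type maximum principle argument on a boundary strip $\Omega_{\delta_0}$, with $\delta_0$ smaller than half the inradius of $\Omega$, so that $d_\Omega$ is smooth and $\Delta d_\Omega$ is explicitly bounded on $\Omega_{\delta_0}$. I would introduce the auxiliary function
\[
J(x,t):=|\nabla u(x,t)|^2-\Phi^2(d_\Omega(x)),\qquad \Phi(r):=r^{-\beta}(d_p+Cr^\alpha),
\]
with $C>0$ large to be chosen, and aim to show $J\le 0$ on $\overline{\Omega_{\delta_0}}\times[0,T)$. Since \eqref{FiPucciSouplet} already provides a uniform-in-time control on $|\nabla u|$ on $\Omega\setminus\Omega_{\delta_0/2}$, enlarging $C$ yields \eqref{estconvgradest} globally. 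The radial symmetry enters only through the explicit and globally bounded form of $\Delta d_\Omega$ on $\Omega_{\delta_0}$, which removes the curvature-induced subtleties that make the proof of Theorem~\ref{convgradest} more delicate.

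The Bochner identity combined with $u_t-\Delta u=|\nabla u|^p+h$ yields
\[
\tfrac12(\partial_t-\Delta)|\nabla u|^2 = p|\nabla u|^{p-2}\,\nabla u\cdot D^2u\cdot\nabla u + \nabla u\cdot\nabla h - |D^2u|^2.
\]
At an interior spacetime positive maximum of $J$, the vanishing of $\nabla J$ forces $D^2u\cdot\nabla u=\Phi\Phi'\,\nabla d_\Omega$, so $\nabla u\cdot D^2u\cdot\nabla u=\Phi\Phi'u_\nu$ with $|u_\nu|\le|\nabla u|=\Phi$. Discarding the favorable $-|D^2u|^2$ term, the condition $(\partial_t-\Delta)J\ge 0$ reduces to the pointwise algebraic inequality
\[
(\Phi')^2+\Phi\Phi''+\Phi\Phi'\Delta d_\Omega \;>\; p\,\Phi^p|\Phi'|+\Phi\,\|\nabla h\|_\infty \qquad\text{on } (0,\delta_0].
\]
The crucial observation, which encodes the sharpness of the constant $d_p$, is that for the bare profile $\Phi_0(r)=d_p r^{-\beta}$ the identities $d_p^{p-1}=\beta$ and $(p-2)\beta<1$ give
\[
(\Phi_0')^2+\Phi_0\Phi_0''-p\,\Phi_0^p|\Phi_0'|=\beta^2 d_p^2\,r^{-2\beta-2},
\]
that is, a strict positive gap at the leading order $r^{-2\beta-2}$.

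The correction $Cr^{\alpha-\beta}$ generates cross terms of order $Cr^{\alpha-2\beta-2}$ (with coefficient of generically unfavorable sign when $p>2$) and quadratic terms of order $C^2 r^{2\alpha-2\beta-2}$, whereas $\Phi\Phi'\Delta d_\Omega$ is $O(r^{-2\beta-1})$ and $\Phi\,\|\nabla h\|_\infty$ is $O(r^{-\beta})$. Balancing these against the leading gap $\beta^2 d_p^2\,r^{-2\beta-2}$, the restriction $\alpha<\beta/2$ keeps the quadratic-in-$C$ contribution subordinate to the linear cross term, allowing a joint choice of small $\delta_0$ and $C$ in a suitable window to close the interior inequality while simultaneously matching the parabolic boundary: (i) $\Phi^2(d_\Omega)\ge\|\nabla u_0\|^2_\infty$ at $t=0$, by enlarging $C$; (ii) $|\nabla u|\le\Phi(\delta_0)$ on $\{d_\Omega=\delta_0\}\times[0,T)$ via \eqref{FiPucciSouplet} applied with small $\eta$; (iii) $J\to-\infty$ as $d_\Omega\to 0^+$ for each fixed $t<T$, made rigorous by working first on $\{\delta<d_\Omega<\delta_0\}$ and letting $\delta\to 0^+$.

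The main obstacle is the fine quantitative bookkeeping: one must simultaneously satisfy the upper bound on $C\delta_0^\alpha$ imposed by interior dominance (keeping the cross and quadratic corrections below the leading gap $\beta^2 d_p^2$) and the lower bound on $C\delta_0^\alpha$ coming from the boundary matching \eqref{FiPucciSouplet}, and it is the narrowness of this window that is responsible for the specific restriction $\alpha<\beta/2$. Once this is arranged, the spacetime maximum principle yields a contradiction at any interior positive maximum, giving $J\le 0$ throughout $\overline{\Omega_{\delta_0}}\times[0,T)$ and hence \eqref{estconvgradest}.
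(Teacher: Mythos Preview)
Your approach has a genuine gap, and it stems from not exploiting the rotational invariance of the domain in any essential way.

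First, observe that your only use of the ball/annulus hypothesis is the boundedness of $\Delta d_\Omega$ on $\Omega_{\delta_0}$; but this holds for \emph{every} smooth bounded domain. So if your argument worked, it would prove the sharp estimate \eqref{estconvgradest} for arbitrary smooth domains and all $p>2$, which the paper does not achieve (and which is not known).

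Second, the direction of your key algebraic inequality is inverted. At an interior positive maximum of $J$ you correctly derive $(\partial_t-\Delta)J\ge 0$, and after using $\nabla J=0$ and discarding $-|D^2u|^2$ you obtain an inequality of the form
\[
(\Phi')^2+\Phi\Phi''+\Phi\Phi'\Delta d_\Omega + pG^{p-2}\Phi\Phi'\,u_\nu + \nabla u\cdot\nabla h \ge 0.
\]
To reach a contradiction you must show this \emph{fails}. But your own leading-order computation gives $(\Phi_0')^2+\Phi_0\Phi_0''-p\Phi_0^p|\Phi_0'|=\beta^2 d_p^2 r^{-2\beta-2}>0$, i.e.\ the inequality is \emph{satisfied} at leading order, not violated. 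Even if you retain the term $-|D^2u|^2$ and use the lower bound $|D^2u|\ge\Phi|\Phi'|/G$ coming from $\nabla J=0$, the leading orders cancel exactly (this is precisely the sharpness of $d_p$), and the remainder depends on quantities you do not control: how much $G=|\nabla u|$ exceeds $\Phi$, and crucially the ratio $u_\nu/|\nabla u|$. If the maximum occurs where $\nabla u$ is largely tangential, the favorable drift term $pG^{p-2}\Phi\Phi'\,u_\nu$ vanishes and the argument collapses.

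This is exactly where the paper's proof differs. It first uses the rotational invariance to show that the \emph{tangential} part of $\nabla u$ is uniformly bounded (via a comparison of $u$ with its rotates $u_\Theta$ and the comparison principle). This bound feeds into Lemma~\ref{boundtautau} to control $|\Delta u-u_{\nu\nu}|\le C d_\Omega^{-1-\gamma}$ for any $\gamma>\beta/2$. Then, along each inward normal ray, $u_\nu$ satisfies the scalar ODE inequality $g'+g^p\le M s^{-1-\gamma}$, which is compared with the explicit supersolution $w(s)=d_p s^{-\beta}+Ms^{-\gamma}$. The sharp constant $d_p$ comes from this one-dimensional comparison, not from a Bernstein argument on $|\nabla u|^2$; and the restriction $\alpha<\beta/2$ arises from the range of admissible $\gamma$ in Lemma~\ref{boundtautau} (with $k=0$), not from the balancing of cross and quadratic terms you describe.
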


To prove Theorem~\ref{convgradest}, we need two lemmas.
The first one provides a control on the tangential derivatives.
It can be proved by adapting a device from \cite{LasryLions} (for the corresponding elliptic problem),
 using the scaling of the equation, the convexity of $\Omega$ and a comparison principle.

\begin{lemma} \label{lemutau}
 Define the tangential part of the gradient by
\begin{equation}\label{deftanggrad}
\nabla_\tau u=\nabla u-(\nu\cdot\nabla u)\nu.
\end{equation}
Under the assumptions of Theorem~\ref{convgradest}, there exists $C>0$ such that
\label{tangbound1}
\begin{equation}\label{tanestconv}
 |\nabla_\tau u|\le Cd_\Omega^{-1/2}\quad\hbox{ in $\Omega_{\delta_0}\times (0,T)$.}
\end{equation}
\end{lemma}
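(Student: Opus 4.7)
The plan is to adapt the doubling-variables argument of \cite{LasryLions} from the elliptic setting to the parabolic equation \eqref{maineq}. On the cylinder $\overline\Omega\times\overline\Omega\times[0,T)$ I would introduce, for a small regularization $\eta>0$, the functional
$$W(x,y,t)=u(x,t)-u(y,t)-K|x-y|\,\Psi(x,y),\qquad \Psi(x,y):=\bigl(d_\Omega(x)+d_\Omega(y)+\eta\bigr)^{-1/2},$$
with $K>0$ taken sufficiently large in terms of $p$, $\Omega$, $\|u_0\|_{C^1}$ and $\|h\|_{C^1}$. Once $W\le 0$ is established on the whole cylinder, restricting to $y=x+s\tau$ with $\tau\cdot\nu(x)=0$, and sending $s\to 0^+$ and then $\eta\to 0$, yields \eqref{tanestconv}.

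The parabolic-boundary part is immediate. At $t=0$ the $C^1$ regularity of $u_0$ gives $W(\cdot,\cdot,0)\le 0$ provided $K$ is large. At any $(x,y,t)$ with $x\in\partial\Omega$ (and symmetrically $y\in\partial\Omega$) one has $u(x,t)=0$ and $u(y,t)\ge 0$, so $W=-u(y,t)-K|x-y|\Psi(x,y)\le 0$ automatically.

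The decisive step is ruling out a positive interior maximum at some $(x_0,y_0,t_0)$ with $x_0\ne y_0$ and $t_0\in(0,T)$. There, the usual first- and second-order conditions combined with \eqref{maineq} at $x_0$ and $y_0$ reduce the problem to the scalar inequality
$$|\nabla u(x_0,t_0)|^p-|\nabla u(y_0,t_0)|^p+h(x_0)-h(y_0)\ \le\ K\bigl[(\partial_t-\Delta_x-\Delta_y)\bigl(|x-y|\Psi\bigr)\bigr](x_0,y_0,t_0).$$
Feeding in the sharp a priori bound $|\nabla u|\le Cd_\Omega^{-\beta}$ from \eqref{FiPucciSouplet} and using the scaling of \eqref{maineq}, the two sides are weighted so that the weight exponent $1/2$ in $\Psi$ is exactly what produces dominance of the right-hand side for $K$ large, the restriction $p\in(2,3)$ (i.e.\ $\beta\in(1/2,1)$) keeping the corresponding comparison strict. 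The main obstacle is the algebra of the second-order derivatives of $|x-y|\,\Psi(x,y)$, where convexity of $\Omega$ enters decisively: $d_\Omega$ is concave on $\Omega$, so $D^2d_\Omega\le 0$, and the resulting Hessian contributions appear with the favorable sign. This delivers the required contradiction, whence $W\le 0$ and the conclusion follows as described.
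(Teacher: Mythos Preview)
Your doubling-variables scheme has a genuine gap at the boundary. The claim that the case $y\in\partial\Omega$ is ``symmetric'' to $x\in\partial\Omega$ is false: when $y\in\partial\Omega$ one has $u(y,t)=0$ and
\[
W(x,y,t)=u(x,t)-K|x-y|\,\Psi(x,y),
\]
and this is \emph{not} automatically nonpositive. Taking $y$ to be the nearest boundary point to $x$, so $|x-y|=d_\Omega(x)$, the barrier is at most of order $K d_\Omega(x)\bigl(d_\Omega(x)+\eta\bigr)^{-1/2}$, while $u(x,t)$ is only controlled by $\|\nabla u(\cdot,t)\|_\infty\,d_\Omega(x)$ (or, uniformly in $t$, by $Cd_\Omega(x)^{1-\beta}$ from \eqref{FiPucciSouplet}). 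Since $\|\nabla u(\cdot,t)\|_\infty\to\infty$ as $t\to T$ and $1-\beta<\tfrac12$ for $p\in(2,3)$, no choice of $K$ independent of $t$ (and surviving the limit $\eta\to 0$) makes $W\le 0$ on $\{y\in\partial\Omega\}$. In fact, the inequality $W\le 0$ for \emph{all} $(x,y)$ would entail a bound $|\nabla u|\le Cd_\Omega^{-1/2}$ in the normal direction as well, which is incompatible with the known behavior $|u_\nu|\sim d_\Omega^{-\beta}$, $\beta>\tfrac12$. So the proposed barrier is simply too weak in the normal direction, and the argument cannot close as written.

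The paper's proof avoids this difficulty by a different mechanism. Rather than doubling variables, it uses the dilation $u_\lambda(x,t)=\lambda^{-\sigma}u(y+\lambda(x-y),\lambda^2 t)$ with $\sigma=(p-2)/(p-1)$, which is a supersolution of \eqref{maineq} up to $O(1-\lambda)$ corrections (convexity guarantees $y+\lambda(x-y)\in\Omega$). The comparison principle then yields the \emph{one-sided} directional inequality $(y-x)\cdot\nabla u(x,t)\ge -C$ for all $x,y\in\Omega$, with $C$ independent of $t$. Finally, for $x\in\Omega_{\delta_0}$ and any unit $\xi\perp\nu(x)$, the interior-sphere condition allows one to slide $y=x\pm s_0\xi$ with $s_0\sim d_\Omega(x)^{1/2}$ inside $\Omega$, turning the directional bound into $|\nabla_\tau u|\le Cd_\Omega^{-1/2}$. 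The point is that this argument never asserts control of the normal derivative; the tangential rate $d_\Omega^{-1/2}$ emerges from the \emph{geometry} (length of tangential segments near the boundary), not from the weight in a barrier.
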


We note that the proof of Lemma~\ref{lemutau} actually works for all $p>2$ but,  in view of \eqref{FiPucciSouplet}, estimate \eqref{tanestconv} is only of interest for $p<3$ 
(since otherwise $1/2\ge \beta$).

\begin{proof} 
We adapt the argument of \cite[Proposition IV.2]{LasryLions}.
Fix $y\in \Omega$. Let $\lambda\in (0,1)$ and set $\sigma:=\dfrac{p-2}{p-1}$.  
For any $(x,t)\in\overline\Omega\times [0,T)$, we have $(y+\lambda (x-y), \lambda^2 t)\in\overline\Omega\times [0,T)$
owing to the convexity of $\Omega$.
We may thus set
$$u_\lambda(x,t)=\lambda^{-\sigma}u(y+\lambda (x-y), \lambda^2 t),\qquad (x,t)\in\overline\Omega\times [0,T).$$
We have that in $\Omega\times [0,T)$, 
$u_\lambda$ satisfies
$$\partial_t u_\lambda -\Delta u_\lambda=|\nabla u_\lambda|^p+\lambda^{2-\sigma}h(y+\lambda(x-y)).$$
Moreover, using  
$h\in C^1(\bar \Omega)$ and the fact that $|y-x|\le {\rm diam}(\Omega)$, we get
\begin{align*}
\lambda ^{2-\sigma}h
&(y+\lambda (x-y))\\
&=h(x)-h(x)+h(x+(1-\lambda)(y-x))+(\lambda^{2-\sigma}-1)h(y+\lambda (x-y))\\
& \ge h(x)-(1-\lambda)|y-x|\|\nabla h\|_\infty-(1-\lambda^{2-\sigma}) \|h\|_\infty \ge h(x)- C(1-\lambda).
\end{align*}
Consequently
$$\partial_t u_\lambda -\Delta u_\lambda-|\nabla u_\lambda|^p\ge h(x)-C(1-\lambda)\quad\hbox{ in $\Omega\times (0,T)$.}$$
Next, using $u_0\ge 0$, $u_0\in C^1(\bar \Omega)$, and the fact that
 $\lambda^{-\sigma}\ge 1$ we obtain that, for all $x\in\Omega$,
\begin{align*}
u_\lambda(x,0)=\lambda^{-\sigma}u_0(y+\lambda(x-y))
&\ge u_0(x+(1-\lambda)(y-x))\\
&=u_0(x)-u_0(x)+u_0(x+(1-\lambda)(y-x))\\
&\ge u_0(x)-C(1-\lambda) \quad\hbox{ for all $x\in\Omega$}.
\end{align*}
Since, moreover,
$$u_\lambda(x,t)\ge 0=u(x,t)\quad\hbox{ for all $(x,t)\in \partial\Omega\times (0,T)$},$$
it follows that $$W(x, t):=u_\lambda(x,t)+C(1-\lambda)(t+1)$$
is a supersolution to \eqref{maineq}.
From the comparison principle we get that $W\ge u$, that is:
$$u(x+(1-\lambda)(y-x), \lambda^2t)\ge \lambda^\sigma u(x,t)-C\lambda^\sigma(1-\lambda)(t+1).$$
Using the bounds \eqref{bounduMP}, \eqref{boundutMP}, it follows that
\begin{align*}
u\bigl(x+&(1-\lambda)(y-x),t\bigr)-u(x,t)\\
&\ge u\bigl(x+(1-\lambda)(y-x), \lambda^2t\bigr)-u(x,t)-C_1(1-\lambda^2)t\\
& \ge (\lambda^\sigma-1)u(x,t)-C\lambda^\sigma(1-\lambda)(t+1)-C_1(1-\lambda^2)t\\
& \ge -C_2(1-\lambda).
 \end{align*}
Dividing by $(1-\lambda)$ and  sending $\lambda\to 1$,
we get that for any $x,y\in \Omega$, it holds
\begin{equation}\label{boundeqconv}
(y-x)\cdot\nabla u(x,t)  \ge -C_2.
\end{equation}
 Now, since $\Omega$ is smooth, it satisfies an interior sphere condition of radius $R$ for some $R>0$.
For any $x\in \Omega_R$  and any vector $\xi \perp \nu(x)$,
we see that the segment $(x-s\xi,x+s\xi)\subset \Omega$ with 
$$s=\sqrt{R^2-(R-d_\Omega(x))^2}=\sqrt{2Rd_\Omega(x)-d^2_\Omega(x)}>\sqrt{Rd_\Omega(x)}=:s_0(x).$$
 For any $x\in \Omega_R$ and $t\in (0,T)$, take $\xi:=\frac{\nabla_\tau u(x,t)}{|\nabla_\tau u(x,t)|}\perp \nu(x)$ (in case $\nabla_\tau u(x,t)\ne 0$,
otherwise there is nothing to prove). Choosing $y=x-s_0\xi$ in \eqref{boundeqconv}, we deduce 
(omitting the variables $x,t$ for conciseness) that
$$\sqrt{Rd_\Omega}|\nabla_\tau u|
=\sqrt{Rd_\Omega}\frac{\nabla_\tau u}{|\nabla_\tau u|}\cdot\bigl(\nabla_\tau u+(\nu\cdot\nabla u)\nu\bigr)
=s_0\xi\cdot\nabla u\le C_2,$$
hence \eqref{tanestconv}
(using also \eqref{Bernstein0} in case $R<\delta_0$).
\end{proof} 

Next, we want to estimate the normal derivative. 
The main idea is to use the PDE together with an estimate for the tangential part of $\Delta u$
to derive  a differential inequality for $u_\nu$.
Our second lemma provides the required estimate on the tangential part of $\Delta u$.

\begin{lemma} \label{boundtautau}
Assume that 
\begin{equation}\label{tanestconv2} 
|\nabla_\tau u|\le Cd_\Omega^{-k}\quad\hbox{ in $\Omega_{\delta_0}\times (0,T)$,}
\end{equation}
for some $k\in [0,\beta)$.
Then for any $\gamma>\frac{\beta+k}{2}$,
there exists a constant $C_1>0$ such that 
\begin{equation}\label{utautau}
 |\Delta u-u_{\nu\nu}|\le C_1 d_\Omega^{-1-\gamma}\quad\hbox{ in $\Omega_{\delta_0}\times (T/2,T)$.}
\end{equation}
where $u_{\nu\nu}=\nu(D^2u)\nu$.
In particular, under the assumptions of Theorem~\ref{convgradest}, inequality~\eqref{utautau} is true for any $\gamma>\gamma_0:=\frac{\beta}{2}+\frac14$.
\end{lemma}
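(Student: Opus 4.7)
The plan is to bound $|\tau\cdot D^2u(x_0,t_0)\cdot\tau|$ pointwise for every $x_0\in\Omega_{\delta_0}$ with $d:=d_\Omega(x_0)$ small, every $t_0\in(T/2,T)$, and every unit vector $\tau$ tangent to $\partial\Omega$ at the projection of $x_0$ onto the boundary. Indeed, in an orthonormal frame $\{\tau_1,\dots,\tau_{n-1},\nu(x_0)\}$ at $x_0$ one has $\Delta u-u_{\nu\nu}=\sum_{i=1}^{n-1}\tau_i\cdot D^2u\cdot\tau_i$, so such a pointwise bound immediately yields \eqref{utautau}. My strategy is a parabolic rescaling adapted to the scale invariance of~\eqref{maineq}, followed by standard parabolic regularity and an interpolation inequality.

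Fix such $x_0,t_0,\tau$ and write $\sigma:=(p-2)/(p-1)=1-\beta$. Set
\[
v(y,s):=d^{-\sigma}u(x_0+dy,\,t_0+d^2s),\quad (y,s)\in Q_1:=B(0,\tfrac12)\times(-s_1,0],
\]
where $s_1>0$ is a small fixed number, so that the image of $Q_1$ under the change of variables is contained in $\Omega_{\delta_0/2}\times(T/2,T)$ once $d$ is small enough. A direct computation using $\nabla_yv=d^{\beta}\nabla_xu$ and $D^2_yv=d^{1+\beta}D^2_xu$ shows that $v$ solves $v_s-\Delta v=|\nabla v|^p+d^{1+\beta}h(x_0+dy)$ in $Q_1$. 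Applying the sharp gradient estimate \eqref{FiPucciSouplet} on $B(x_0,d/2)$ (where $d_\Omega\ge d/2$) together with \eqref{tanestconv2} yields
\[
\|\nabla v\|_{L^\infty(Q_1)}\le C,\qquad \|\nabla_\tau v\|_{L^\infty(Q_1)}\le Cd^{\beta-k}.
\]

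Since $|\nabla v|$ is uniformly bounded and $|\nabla v|^p$ is Lipschitz in $\nabla v$ on bounded sets, parabolic $L^q$ theory (with $q$ arbitrarily large) followed by a Schauder bootstrap yields $v\in C^{2,\alpha}$ for every $\alpha\in(0,1)$, with norm controlled independently of $d$ on some sub-cylinder $Q_2\subset\subset Q_1$. In particular $\|D\nabla_\tau v\|_{L^\infty(Q_2)}+[\nabla_\tau v]_{C^{0,\alpha}(Q_2)}\le C_\alpha$. The standard interpolation inequality
\[
\|\nabla\phi\|_{L^\infty}\le C\|\phi\|_{L^\infty}^{\alpha/(1+\alpha)}\,[\nabla\phi]_{C^{0,\alpha}}^{1/(1+\alpha)}+C\|\phi\|_{L^\infty}
\]
applied to $\phi=\nabla_\tau v$ then gives $\|D\nabla_\tau v\|_{L^\infty(Q_3)}\le C_\alpha d^{(\beta-k)\alpha/(1+\alpha)}$ on a smaller cylinder $Q_3$. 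Scaling back,
\[
|\tau\cdot D^2u(x_0,t_0)\cdot\tau|\le |D\nabla_\tau u(x_0,t_0)|=d^{-(1+\beta)}|D\nabla_\tau v(0,0)|\le C_\alpha\,d^{-(1+\beta)+(\beta-k)\alpha/(1+\alpha)}.
\]
Given any $\gamma>(\beta+k)/2$ one has $(\beta-\gamma)/(\beta-k)<1/2$, so choosing $\alpha\in(0,1)$ close enough to $1$ ensures $\alpha/(1+\alpha)\ge(\beta-\gamma)/(\beta-k)$, which makes the exponent above at most $-1-\gamma$; summing over the tangent frame proves \eqref{utautau}. The ``in particular'' assertion is then immediate from Lemma~\ref{lemutau} (where $k=\tfrac12$).

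The main technical hurdle is the uniform $C^{2,\alpha}$ bound on $v$ for every $\alpha\in(0,1)$, with constants independent of $d$. This hinges crucially on the \emph{sharp} Bernstein estimate~\eqref{FiPucciSouplet}: it guarantees that $|\nabla v|=O(1)$ after rescaling (instead of blowing up), putting the nonlinearity $|\nabla v|^p$ in its smooth regime. Once this is secured, iterated parabolic $W^{2,q}$ and Schauder estimates on the linearized equation produce the desired uniform regularity, and the rest of the argument is a quantitative interpolation.
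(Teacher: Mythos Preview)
Your argument is correct and reaches the same sharp threshold $\gamma>(\beta+k)/2$, but the route is genuinely different from the paper's. The paper never rescales: it first proves a global weighted Sobolev estimate $\|u(\cdot,t)\|_{W^{3-\eta,q}(\Omega^\eps)}\le C\eps^{-\beta-2}$ via an iterated application of a weighted $L^q$ parabolic regularity lemma (Lemma~\ref{lemweightedregul}), then writes $\Delta u-u_{\nu\nu}=\nabla\cdot(\nabla_\tau u)+u_j(\nu^i\nu^j)_i$ and interpolates $\nabla_\tau u$ in $W^{1,\infty}$ between $W^{2-\eta,q}$ and $L^\infty$ using a fractional Gagliardo--Nirenberg inequality. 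Your approach instead localizes by parabolic rescaling, so that the Bernstein bound \eqref{FiPucciSouplet} converts directly into a uniform $C^1$ bound on $v$; this puts the nonlinearity in its Lipschitz regime and lets standard interior $L^q$+Schauder theory deliver a uniform $C^{2,\alpha}$ bound, after which the elementary $C^0$--$C^{1,\alpha}$ interpolation suffices. Your method is arguably more elementary (no fractional Sobolev spaces, no weighted-regularity lemma) and makes the role of scale invariance transparent; the paper's method is more ``linear'' and treats the equation only through the size of $u_t-\Delta u$. Two small points worth making explicit in a write-up: (i) the tangential projection in the hypothesis uses the variable normal $\nu(x)$, so you should note that switching to the frozen $\nu(x_0)$ costs only $O(d^{1-\beta})\le O(d^{-k})$; (ii) the uniform $L^\infty$ control on $v$ needed for interior estimates comes from subtracting $v(0,0)$ and using $|\nabla v|\le C$ together with $\|u_t\|_\infty\le M_0$ (cf.~\eqref{boundutMP}).
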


 We note that $\gamma_0<\beta$ for $p\in(2,3)$.
To prove Lemma~\ref{boundtautau}, the rough idea is to first estimate the higher derivatives $D^3u$ by combining
parabolic regularity and the Bernstein estimate~\eqref{Bernstein0},
and then to interpolate $D(\nabla_\tau u)$ between $D^3u$ and $\nabla_\tau u$. 
However, for technical reasons, we cannot conveniently estimate $D^3u$ itself (unless requiring higher regularity on $h$),
but we shall ``almost'' estimate it by working in the Sobolev spaces $W^{3-\eta,q}$ 
with $\eta>0$ small.

\begin{proof}  Let us define 
 $$\Omega^\eps:=\left\{x\in \Omega,\ d_\Omega(x)>\eps\right\},\quad \eps>0.$$
For convenience, we shall as usual denote partial derivatives of functions   by subscripts, that is   $u_i=\frac{\partial u}{\partial x_i}=\partial_i u$.  Also the spatial gradient will be denoted indifferently by $D$ or $\nabla$.

{\bf Step 1.} {\it Higher Sobolev estimate of $u$.} We claim that, for any $\eta\in (0,1)$ and $q\in (1,\infty)$, there exists a constant $C>0$ such that
\begin{equation}\label{estimD3u}
\|u(\cdot,t)\|_{W^{3-\eta,q}(\Omega^\eps)}\le C\eps^{-\beta-2},\quad T/2<t<T,\ \eps\in(0,\delta_0).
\end{equation}

By \eqref{Bernstein0}, the functions
$d_\Omega^{\beta-1} |u|$, $d_\Omega^\beta |\nabla u|$ and $d_\Omega^{\beta+1}|u_t-\Delta u|=d_\Omega^{p\beta} |\nabla u|^p$
are bounded in $\Omega\times (0,T)$. We may thus apply Lemma~\ref{lemweightedregul} with $z=u$, $\gamma=\beta+1$ to deduce that
for any $q\in (1,\infty)$,
\begin{equation}\label{estimD3uA}
d_\Omega^{\beta+1} D^2u
\in L^q(Q'),\quad\hbox{where $Q':=\Omega\times (T/4,T)$}.
\end{equation}
Therefore, for any $i\in \{1,\dots,n\}$, we have
$$d_\Omega^{\beta+2}|\partial_tu_i-\Delta u_i|\le d_\Omega^{\beta+2}
\bigl[p|\nabla u|^{p-1}|\nabla u_i|+|h_i|\bigr]\le Cd_\Omega^{\beta+1}(|D^2u|+1)
\in L^q(Q').$$
Since also $d_\Omega^\beta |u_i|$ and $d_\Omega^{\beta+1} |\nabla u_i|\in L^q(Q')$,
we may thus apply Lemma~\ref{lemweightedregul} with $z=u_i$ and $\gamma=\beta+2$ to deduce that
\begin{equation}\label{estimD3uC}
d_\Omega^{\beta+2} D^3u \ \hbox{ and }\ d_\Omega^{\beta+2} \partial_tDu\in L^q(\Omega\times (T/2,T)).
\end{equation}
Now, setting $I=(T/2,T)$, it follows from \eqref{Bernstein0}, \eqref{estimD3uA}, \eqref{estimD3uC} that
\begin{equation}\label{estimD3uE}
\|Du\|_{L^q(I;W^{2,q}(\Omega^\eps))}+\|Du\|_{W^{1,q}(I;L^q(\Omega^\eps))}\le C\eps^{-\beta-2},\quad \eps\in (0,\delta_0)
\end{equation}
(here and hereafter, the generic constant $C$ is independent of $\eps$ but may depend on $q,\eta$).
For any $\theta\in (0,1)$, we next use the imbedding 
$$L^q(I;W^{2,q}(\Omega^\eps))\cap W^{1,q}(I;L^q(\Omega^\eps)) \hookrightarrow W^{\theta,q}(I;W^{2(1-\theta),q}(\Omega^\eps)).$$
Observe that the constant in this imbedding can be chosen uniform for all $\eps\in(0,\eps_0)$, 
with $\eps_0\in (0,\delta_0)$ sufficiently small
(this follows from the uniform boundedness of the corresponding second order extension operators).
It thus follows from \eqref{estimD3uE} that
\begin{equation}\label{estimD3uF}
\|Du\|_{W^{\theta,q}(I;W^{2-2\theta,q}(\Omega^\eps))}\le C\eps^{-\beta-2},\quad \eps\in(0,\delta_0).
\end{equation}
Since for $q>1/\theta$ we have the imbedding
$W^{\theta,q}(I;W^{1-2\theta,q}(\Omega^\eps))\subset L^\infty(I;W^{1-2\theta,q}(\Omega^\eps))$
(uniform in $\eps$),
the claim \eqref{estimD3u} follows.

{\bf Step 2.} {\it Estimate of $\Delta u-u_{\nu\nu}$.}
We shall denote by $\nu^i$ the coordinates of $\nu$ and use the convention of summation on repeated indices.

Recalling \eqref{defunu} and \eqref{deftanggrad}, we can define the functions $\nabla_\tau u$ and $u_{\nu\nu}$ 
in $\Omega\times (0,T)$ and we have 
$\nabla_\tau u=\nabla u-(\nu\cdot\nabla u)\nu=\bigl(u_i-u_j\nu^i\nu^j\bigr)e_i$.
It follows that 
$$\Delta u-u_{\nu\nu}\equiv u_{ii}-u_{ij}\nu^i\nu^j
=\bigl(u_i-u_j\nu^i\nu^j\bigr)_i+u_j(\nu^i\nu^j)_i
=\nabla\cdot(\nabla_\tau u)+u_j(\nu^i\nu^j)_i,$$
hence
\begin{equation}\label{D23u0}
|\Delta u-u_{\nu\nu}|\le C\bigl(|D(\nabla_\tau u)|+|\nabla u|\bigr).
\end{equation}

Let $q\in(1,\infty)$, $\eta\in(0,1)$, $t\in (T/2,T)$ and $\eps\in (0,\delta_0)$. 
In what follows $C>0$ will denote a generic constant independent of $t,\eps$ but depending on $q,\eta$ (and $\mu$ below).
We shall estimate $\nabla_\tau u(\cdot,t)$ in $W^{1,\infty}(\Omega^\eps)$ 
by interpolating between its $L^\infty$ norm
and its $W^{2-\eta,q}$ norm for $\eta$ small and $q$ large.
To this end, for each $i,k\in\{1,\dots,n\}$, we first compute
$$\partial_k\bigl(u_j\nu^i\nu^j-u_i\bigr)
=u_{jk}\nu^i\nu^j+u_j(\nu^i\nu^j)_k-u_{ik},$$
so that, by \eqref{estimD3u},
\begin{equation}\label{D23u}
\|\nabla_\tau u(\cdot,t)\|_{W^{2-\eta,q}(\Omega^\eps)}\le C\|u(\cdot,t)\|_{W^{3-\eta,q}(\Omega^\eps)}
\le C\eps^{-\beta-2}.
\end{equation}
We now use the Gagliardo-Nirenberg type interpolation inequality \eqref{estimGN1}-\eqref{estimGN2}
with $k=2-\eta$ which implies
$$
\|\phi \|_{W^{1,\infty}(\Omega^\eps)} \leq C_0 \|\phi \|_{W^{2-\eta,q}(\Omega^\eps)}^{\mu} \| \phi \|_{L^\infty(\Omega^\eps)}^{1-\mu},
\quad \phi\in C^2(\overline\Omega^\eps),
$$
valid for any
\begin{equation}\label{condGN}
\mu\in \bigl(\textstyle\frac{1}{2-\eta},1\bigr),\quad q\in\bigl(\frac{n}{\mu(2-\eta)-1},\infty\bigr)
\end{equation}
(note that, as above, the constant $C_0=C_0(\eta,q,\mu)$ can be taken independent of $\eps>0$ small).
Applying this with $\phi=\nabla_\tau u(\cdot,t)$ and using \eqref{tanestconv2}, 
\eqref{D23u}, we get 
$$
\|D(\nabla_\tau u)(\cdot,t)\|_{L^\infty(\Omega^\eps)}
\le C_0 \|\nabla_\tau u(\cdot,t)\|_{W^{2-\eta,q}(\Omega^\eps)}^{\mu}\|\nabla_\tau u(\cdot,t)\|_{L^\infty(\Omega^\eps)}^{1-\mu} 
\le C\eps^{-\mu(\beta+2)-(1-\mu)k}. 
$$
Going back to \eqref{D23u0} and using \eqref{Bernstein0} again, we obtain
$$\|\bigl(\Delta u-u_{\nu\nu}\bigr)(\cdot,t)\|_{L^\infty(\Omega^\eps)}
\le C\eps^{-\mu(\beta+2)-(1-\mu)k}$$ 
 Since $\mu\to (\frac12)_+$ as $\eta\to 0$ in \eqref{condGN}, the lemma follows.
\end{proof}

We can now complete the proof of Theorem \ref{convgradest} in the convex domain case.

\begin{proof}[Proof of Theorem \ref{convgradest}]
 In $\Omega_{\delta_0}\times (T/2,T)$, we have
$$u_{\nu\nu}+|u_\nu|^p\leq u_{\nu\nu}-\Delta u+\Delta u+|\nabla u|^p=u_{\nu\nu}-\Delta u+u_t-h.$$ 
Pick any $\gamma\in \bigl(\frac{\beta}{2}+\frac14,\beta\bigr)$ (which is nonempty for $p\in (2,3)$).
By \eqref{utautau} in Lemma~\ref{boundtautau} and \eqref{boundutMP} it follows that
\begin{equation}\label{ineqg0}
u_{\nu\nu}+ |u_\nu|^p\leq C+C d_\Omega^{-1-\gamma}(x)\leq Md_\Omega^{-1-\gamma}(x).
\end{equation}
Now fix $(x,t)\in\Omega_{\delta_0}\times (T/2,T)$ and let $\bar x\in\partial\Omega$ be the projection of $x$ onto the boundary.
Set 
$$g(s)=\nu(\bar x)\cdot\nabla u(\bar x+s\nu(\bar x),t)
=u_\nu(\bar x+s\nu(\bar x),t), \qquad 0<s\le d_\Omega(x).$$
 Then \eqref{ineqg0} yields
\begin{equation}\label{ineqg}
g'+g^p\leq M s^{-1-\gamma},\qquad 0<s\le d_\Omega(x).
\end{equation}

Next, define the function
$$w(s):=d_ps^{-\beta}+ Ms^{-\gamma}, \qquad s>0.$$
Using $\gamma\leq \beta$, $p\beta=1+\beta$, $\beta d_p=d_p^p$, we get
$$
w'+w^p
\geq -\beta d_ps^{-1-\beta}-M\gamma s^{-1-\gamma}+d_p^ps^{-p\beta}+ Mpd_p^{p-1}s^{-(p-1)\beta-\gamma} 
=M(\beta+1-\gamma)s^{-1-\gamma},
$$
hence
\begin{equation}\label{ineqg1}
w'+w^p\ge M s^{-1-\gamma},\qquad  s>0.
\end{equation}

We now claim that
\begin{equation}\label{ineqg2}
g\leq w \quad\text{in } (0, d_\Omega(x)).
\end{equation}
  Indeed, set $f:=g-w$ and assume for contradiction that there exists $b\in(0, d_\Omega(x))$ such that $f(b)>0$.  
Since the function $g$ is bounded on $(0, d_\Omega(x))$ owing to $u(\cdot,t)\in C^1(\overline\Omega)$, there exists $\eps\in (0,b)$ 
such that $f(\eps)< 0$.
By continuity, there exists $c\in (\eps, b)$ such that $f(c)=0$ and $f>0$ on $(c,b]$.
By \eqref{ineqg} and \eqref{ineqg1}, we then get 
$f'\leq w^p-g^p<0$ on $(c,b]$, hence
$f(b)<f(c)=0$, which is a contradiction.

Through \eqref{ineqg2}, we have thus proved that
\begin{equation}\label{ineqg3}
u_\nu(x, t)\leq d_p d_\Omega^{-\beta}(x)+ Md_\Omega^{-\gamma}(x)
\quad\hbox{ in  $\Omega_{\delta_0}\times (T/2,T)$.}
\end{equation}
On the other hand, it follows from the proof of \cite[Theorem 1.3]{FPS19} that
\begin{equation}\label{ineqg4}
C_1:=\inf_{\Omega\times (0,T)}u_\nu > -\infty.
\end{equation}
By combining  \eqref{ineqg3}, \eqref{ineqg4} and \eqref{ineqg}, we obtain
$$|\nabla u(x,t)|\leq d_p d_\Omega^{-\beta}+ Md_\Omega^{-\gamma}+C_1+Cd_\Omega^{-1/2}
\le d_p d_\Omega^{-\beta}+ C_2d_\Omega^{-\gamma}
\quad\hbox{ in  $\Omega_{\delta_0}\times (T/2,T)$,}
$$
hence \eqref{estconvgradest}.
\end{proof}

 We now turn to the proof of Theorem \ref{shgradest}, 
which is easier than that of Theorem \ref{convgradest}, 
taking advantage of the invariance of the homogeneous part of the equation
under translations (see also Remark~\ref{remradial}).

\begin{proof}[Proof of Theorem \ref{shgradest}]
We proceed in two steps. First we show that the tangential derivatives are bounded. 
Then we give a precise control on the gradient in the normal direction $u_\nu$.

Let $\Theta$ be a rotation and define $u_\Theta(x,t)=u(\Theta x,t)$.
The function $u_\Theta$ solves
$$\partial_t u_\Theta-\Delta u_\Theta=|\nabla u_\Theta|^p+h\circ\Theta.$$
 Letting 
$$v=u_\Theta-\|u_0\circ\Theta-u_0\|_\infty-t\|h\circ\Theta-h\|_\infty,$$ 
we thus have 
$$v_t-\Delta v-|\nabla v|^p=h\circ\Theta-\|h\circ\Theta-h\|_\infty\le h
\quad\hbox{ in $\Omega\times (0,T)$,}$$ 
as well as $v(\cdot,0)=u_0\circ\Theta-\|u_0\circ\Theta-u_0\|_\infty\le u_0$ in $\Omega$.
Since $v\le 0$ on $\partial\Omega\times (0,T)$, it then follows from the comparison principle that
$v\le u$ hence,
$$u-u_\Theta\ge -\|u_0\circ\Theta-u_0\|_\infty-T\|h\circ\Theta-h\|_\infty
\ge -C\|\Theta-I\|_{L^\infty(\Omega)}\quad\hbox{ in $\Omega\times (0,T)$,}$$ 
where $C=\|\nabla u_0\|_\infty+T\|\nabla h\|_\infty$ and $I$ is the identity.
This easily yields
\begin{equation}\label{boundutau}
|u_\tau|\le C\quad\hbox{ in $\Omega\times (0,T)$,}
\end{equation}
for any tangential derivative.

 By Lemma \ref{boundtautau} with $k=0$, we deduce that, for any $\gamma>\frac{\beta}{2}$,
\begin{equation}\label{boundutautau2}
 |\Delta u-u_{\nu\nu}|\le Cd_\Omega^{-1-\gamma}\quad\hbox{ in $\Omega_{\delta_0}\times (T/2,T)$.}
\end{equation}
Arguing as in the proof of Theorem \ref{convgradest}, we then obtain that
$|u_\nu|\leq d_p d_\Omega^{-\beta}+ Cd_\Omega^{-\gamma}$,
hence 
$$|\nabla u|\leq |u_\nu|+|u_\tau|\leq d_p d_\Omega^{-\beta}+ Cd_\Omega^{-\gamma}$$
for any $\gamma>\frac{\beta}{2}$, and the conclusion follows.
\end{proof}
 
\begin{remark}\label{remradial}
Under the stronger assumption $h\in C^2(\overline\Omega)$, 
in the proof of Theorem~\ref{shgradest}, the boundedness of all tangential second order derivatives
$u_{\tau\tau}$ can be shown by a similar comparison argument as that leading to \eqref{boundutau}, hence giving \eqref{boundutautau2}
without making use of Lemma \ref{boundtautau}.
However, this simplification does not seem possible if $h$ is merely $C^1$.
\end{remark}

\section{Blow-up rates for time-increasing solutions: \\ proof of Theorems \ref{Thmain1} and \ref{Thmain1b}} \label{sect3}
In this section we prove Theorems \ref{Thmain1} and \ref{Thmain1b}.
By the same token, we shall actually also prove the following result.

\begin{proposition}\label{Thmain1gen}
Let $p\in (2, 3]$ and  $\Omega$ be a smooth bounded domain.
Let $u_0\in X$ be such that $T(u_0)<\infty$. Assume that there exist $\eta,\alpha,C>0$ such that
\begin{equation} \label{hypmonot3b}
u_t\geq 0 \quad\hbox{ in $\Omega_\eta\times (T-\eta,T)$}
\end{equation}
and
\begin{equation}\label{hypsharpprofile}
|\nabla u|\le d_\Omega^{-\beta}\bigl(d_p +Cd_\Omega^\alpha\bigr)\quad\hbox{ in $\Omega\times (0, T).$}
\end{equation}
Then there exist constants $C_1,C_2>0$ such that
\begin{equation}
C_1(T-t)^{-1/(p-2)}\leq \|\nabla u(t)\|_\infty \leq C_2(T-t)^{-1/(p-2)},\quad 0<t<T.
\end{equation}
\end{proposition}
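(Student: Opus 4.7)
The lower bound in Proposition~\ref{Thmain1gen} is already contained in \eqref{lowerGBU} (which holds for every GBU solution by \cite{PS3}), so only the upper bound needs work. Following the strategy announced in the introduction, the plan is to construct, on a boundary strip $\Omega_\delta \times (T-\eta', T)$ with $\delta, \eta' \in (0, \eta)$ sufficiently small, an auxiliary functional
\[
J(x, t) = u_t(x, t) - \varepsilon\,u^{p-1}(x, t)\,d_\Omega^{2-p}(x)\bigl(1 + d_\Omega^\kappa(x)\bigr),
\]
with $\varepsilon, \kappa > 0$ to be chosen small, to prove $J \ge 0$ there via a maximum principle, and then to take the limit $x \to x_0 \in \partial\Omega$ along the inward normal in order to convert this into an ODE differential inequality for $u_\nu(x_0, \cdot)$.

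The central step is to verify the parabolic inequality
\[
\mathcal{L}J := J_t - \Delta J - p|\nabla u|^{p-2}\nabla u \cdot \nabla J \ \ge \ 0 \quad \text{in } \Omega_\delta \times (T-\eta', T),
\]
where $\mathcal{L}$ denotes the linearization of \eqref{maineq} at $u$. Differentiating the PDE in time gives $\mathcal{L}u_t = 0$, so this reduces to $\mathcal{L}V \le 0$ for $V := u^{p-1}d_\Omega^{2-p}(1 + d_\Omega^\kappa)$. Expanding $V_t$, $\nabla V$ and $\Delta V$, substituting $u_t = \Delta u + |\nabla u|^p + h$, and organizing the result by order of singularity in $d_\Omega$, one finds that several contributions are all of the worst order $d_\Omega^{-2}$; these combine and, by means of the sharp Bernstein bound \eqref{hypsharpprofile}, cancel up to an $O(d_\Omega^{\alpha-2})$ residual. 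This top-order cancellation is the whole reason for requiring the sharp constant $d_p$ in \eqref{hypsharpprofile}: any strictly larger constant would leave a positive residual of order $d_\Omega^{-2}$, and no perturbation of $V$ could absorb it. Once the cancellation is secured, choosing $\kappa \in (0, \alpha)$ small enough makes the perturbation factor $(1 + d_\Omega^\kappa)$ produce through $V$ a strictly negative contribution of order $d_\Omega^{\kappa-2}$ that dominates the $O(d_\Omega^{\alpha-2})$ residual together with the lower-order errors coming from $\Delta d_\Omega$, from $h$, and from the tangential part of $\nabla u$ (controlled through Lemma~\ref{lemutau} or \eqref{FiPucciSouplet}). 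I expect this bookkeeping of cancellations and error terms to be the main technical obstacle; it is also the source of the restriction $p \le 3$, beyond which the residual can no longer be closed with a positive $\kappa$ (cf.\ Remark~\ref{remGBU}).

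The remaining pieces are comparatively routine. On the lateral boundary $\partial\Omega \times (T-\eta', T)$, Dirichlet conditions together with $u \sim u_\nu d_\Omega$ near $\partial\Omega$ yield $J = 0$. On the inner face $\{d_\Omega = \delta\} \times (T-\eta', T)$ and at the initial time $t = T-\eta'$, the second term of $J$ is bounded (by \eqref{bounduMP} and $d_\Omega \ge \delta$), while $u_t \ge 0$ by the monotonicity hypothesis \eqref{hypmonot3b}; choosing $\varepsilon$ small enough depending on $\delta, \eta'$ then forces $J \ge 0$ on the entire parabolic boundary, and the maximum principle applied to $\mathcal{L}J \ge 0$ yields $J \ge 0$ throughout $\Omega_\delta \times (T-\eta', T)$. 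Finally, evaluating $J(x_0 + s\nu(x_0), t) \ge 0$ and using the Taylor expansions $u(x_0 + s\nu(x_0), t) = u_\nu(x_0, t)s + O(s^2)$ and $u_t(x_0 + s\nu(x_0), t) = \partial_t u_\nu(x_0, t)\,s + O(s^2)$ (valid since $u$ and $u_t$ vanish on $\partial\Omega$), together with $d_\Omega(x_0 + s\nu(x_0)) = s$, then dividing by $s$ and letting $s \to 0^+$, one obtains
\[
\frac{d}{dt}u_\nu(x_0, t) \ \ge\ \varepsilon\,[u_\nu(x_0, t)]^{p-1}, \qquad T - \eta' < t < T.
\]
A standard ODE integration delivers $u_\nu(x_0, t) \le [(p-2)\varepsilon(T-t)]^{-1/(p-2)}$ uniformly in $x_0 \in \partial\Omega$, and the maximum principle applied to $|\nabla u|^2$ (which satisfies a parabolic inequality with drift modulo a bounded forcing from $\nabla h$) propagates this boundary bound to the interior, producing $\|\nabla u(t)\|_\infty \le C(T-t)^{-1/(p-2)}$.
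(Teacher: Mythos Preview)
Your overall strategy is exactly the one the paper follows, and your description of the main computation (the parabolic inequality $\mathcal{L}V\le 0$ for $V=u^{p-1}d_\Omega^{2-p}(1+d_\Omega^\kappa)$, with the sharp constant $d_p$ producing a top-order cancellation that is then absorbed by the $d_\Omega^\kappa$ perturbation provided $\kappa<\alpha$) captures the essential mechanism. The paper organizes this computation through the scalar variable $X=|\nabla u|\,d_\Omega/u$ and a three-region case split (Proposition~\ref{generalcompu2}); your ``order-of-singularity'' bookkeeping is a reasonable informal summary of that.

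There is, however, a genuine gap in your treatment of the parabolic boundary. On the inner face $\{d_\Omega=\delta\}$ and on the initial slice $\Omega_\delta\times\{T-\eta'\}$ you assert that ``$u_t\ge 0$'' together with boundedness of $H_2$ lets you force $J\ge 0$ by taking $\varepsilon$ small. This is not enough: $H_2>0$ strictly in $\Omega_\delta$, so at any point where $u_t$ merely equals $0$ you would have $J=-\varepsilon H_2<0$ no matter how small $\varepsilon$ is. What is actually needed is a \emph{strict} lower bound on $u_t$. The paper obtains this by first invoking the strong maximum principle for the linear equation $\mathcal{L}u_t=0$ (using that $u$ extends smoothly to $\Omega\times(0,T]$ by interior regularity) to get $u_t>0$ in $\Omega_\eta\times(t_0,T]$, hence $u_t\ge\bar c(\delta)>0$ on $\{d_\Omega=\delta\}\times(t_0,T)$; and then, for the initial slice, Hopf's lemma gives $u_t(\cdot,t_0)\ge \hat c\,d_\Omega$ near $\partial\Omega$. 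These two positivity statements, combined with the uniform bound $H_2\le C_1$ (which, incidentally, on the initial slice does \emph{not} follow from ``$d_\Omega\ge\delta$'' since $d_\Omega<\delta$ there, but rather from $u\le C d_\Omega^{1-\beta}$ via \eqref{FiPucciSouplet}), are what allow $\varepsilon$ to be chosen so that $J\ge 0$ on the full parabolic boundary. Without this strict positivity step the maximum-principle argument does not close.
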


Proposition~\ref{Thmain1gen} shows that the time-monotonicity is needed only in a neighborhood of the boundary (cf. Remark~\ref{remGBU}(c)).
It also shows that for $p\le 3$ the sharp gradient estimate \eqref{hypsharpprofile} automatically guarantees the sharp GBU rate.
Note that Theorem~\ref{Thmain1gen} may be of future interest, since \eqref{hypsharpprofile} might be true for general (nonconvex) domains 
although this is presently unknown.

\subsection{Construction of auxiliary functions}
Let $u$ be the maximal classical solution of \eqref{maineq}. 
We define the linear parabolic operator $\mathcal P$ as
$$\mathcal P\phi:=\phi_t-\Delta\phi-p|\nabla u|^{p-2}\nabla u\cdot\nabla \phi.$$
Notice that $\mathcal{P}(u_t)=0$ and
\begin{equation}\label{eqPu} 
\mathcal Pu=(1-p)|\nabla u|^p+h(x).
\end{equation}
We shall use the notation 
$$G=G(x,t)=|\nabla u|,\qquad X=X(x,t)=\dfrac{|\nabla u|d_{\Omega}}{u}.$$
The following lemma is the building block for constructing the various auxiliary functions that we will use.

\begin{lemma}\label{generalcompu}  
(i) For any $a,b\in\R$, we have, in $\Omega_{\delta_0}\times(0,T)$,
\begin{equation}\label{eqPu2}
\begin{aligned}
\dfrac{-\mathcal P(u^ad_\Omega^b)} {u^a d_\Omega^{b-2}}
&=a\Bigl[(p-1){d_\Omega^2G^p\over u}+(a-1)\Bigl({d_\Omega G\over u}\Bigr)^2-\dfrac{h(x)d_\Omega^2}{u}
+2b\dfrac{d_\Omega}{u}\nabla u\cdot\nabla d_\Omega\Bigr]\\
&\quad+b\Bigl[d_\Omega\Delta d_\Omega+pd_\Omega|\nabla u|^{p-2}\nabla u\cdot\nabla  d_\Omega+ (b-1)
\Bigr].\end{aligned}
 \end{equation}

(ii) Let $\sigma\in [0,p-1)$, $\kappa\in [0,p-2)$. There exists a constant $M>0$ (depending on~$u_0$), such that
$$\dfrac{-\mathcal P\bigl[u^{p-1-\sigma}d_\Omega^{2+\kappa-p}\bigr]} {u^{p-1-\sigma}d_\Omega^{\kappa-p}}
\ge F(X)+d_\Omega G^{p-1}\bigl[(p-1-\sigma)(p-1)X-p(p-2-\kappa)\bigr]-Md_\Omega$$
in $\Omega_{\delta_0}\times[T/2,T)$, where
$$F(X):=(p-1-\sigma)(p-2-\sigma)X^2-2(p-1-\sigma)(p-2-\kappa)X+(p-1-\kappa)(p-2-\kappa).$$
\end{lemma}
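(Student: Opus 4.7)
The plan for part (i) is pure computation. With $\phi := u^a d_\Omega^b$ and using $|\nabla d_\Omega| = 1$ in $\Omega_{\delta_0}$, I would expand
\begin{align*}
\Delta\phi &= a u^{a-1} d_\Omega^b \Delta u + a(a-1) u^{a-2} |\nabla u|^2 d_\Omega^b \\
&\quad + 2ab u^{a-1} d_\Omega^{b-1} \nabla u \cdot \nabla d_\Omega + b u^a d_\Omega^{b-1} \Delta d_\Omega + b(b-1) u^a d_\Omega^{b-2},
\end{align*}
combine with $\phi_t = a u^{a-1} u_t d_\Omega^b$, substitute the PDE $u_t - \Delta u = |\nabla u|^p + h$, and add the drift contribution
$$-p|\nabla u|^{p-2} \nabla u \cdot \nabla \phi = -ap u^{a-1} d_\Omega^b |\nabla u|^p - bp u^a d_\Omega^{b-1} |\nabla u|^{p-2} \nabla u \cdot \nabla d_\Omega.$$
Dividing by $-u^a d_\Omega^{b-2}$ and collecting the resulting six terms according to whether their coefficient is proportional to $a$ (assigning the mixed term $2ab u^{-1} d_\Omega \nabla u \cdot \nabla d_\Omega$ to the $a$-bracket) or to $b$ alone yields exactly \eqref{eqPu2}.

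For part (ii), I would substitute $a = p-1-\sigma$ and $b = -(p-2-\kappa)$ into (i) and identify each contribution with its target. The three identifications
$$a(p-1)\frac{d_\Omega^2 G^p}{u} = (p-1-\sigma)(p-1)\, X\, d_\Omega G^{p-1}, \qquad a(a-1)X^2 = (p-1-\sigma)(p-2-\sigma)X^2,$$
and $b(b-1) = (p-1-\kappa)(p-2-\kappa)$ account respectively for the first summand of the $d_\Omega G^{p-1}[\cdots]$ bracket and for the $X^2$ and constant parts of $F(X)$. The two cross terms involving $\nabla u \cdot \nabla d_\Omega$ have coefficients $2ab$ and $bp$, both strictly negative when $\sigma \in [0,p-1)$ and $\kappa \in [0,p-2)$; hence the elementary bound $\nabla u \cdot \nabla d_\Omega \le |\nabla u| = G$ gives the lower bounds $2abX = -2(p-1-\sigma)(p-2-\kappa)X$, the linear term of $F$, and $bp\, d_\Omega G^{p-1} = -p(p-2-\kappa)\, d_\Omega G^{p-1}$, the second summand of $d_\Omega G^{p-1}[\cdots]$.

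It then remains to absorb $-a h d_\Omega^2 / u$ and $b d_\Omega \Delta d_\Omega$ into the error $-M d_\Omega$. The latter is already $O(d_\Omega)$ because $\Delta d_\Omega$ is bounded on $\Omega_{\delta_0}$. For the former I would invoke the uniform Hopf-type lower bound
$$u(x,t) \ge c\, d_\Omega(x), \qquad (x,t) \in \Omega_{\delta_0} \times [T/2, T),$$
for some $c > 0$ depending on $u_0$. This is obtained by applying Hopf's lemma to $u(\cdot, T/2) > 0$ to get $u(\cdot, T/2) \ge c_0 d_\Omega$, and then noting that on $[T/2, T)$ the function $u$ is a supersolution of the heat equation (since $|\nabla u|^p + h \ge 0$), so by the maximum principle $u$ dominates the heat solution $v$ with zero boundary data and initial datum $c_0 d_\Omega$, which itself satisfies $v(x,t) \ge c\, d_\Omega(x)$ uniformly on $[T/2, T]$ by standard first-eigenfunction estimates. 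This bound upgrades $h d_\Omega^2 / u$ to $O(d_\Omega)$ and closes the absorption.

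The main difficulty is essentially bookkeeping: carrying out the expansion of part (i) without sign errors and tracking the signs of the two cross-term coefficients in part (ii) when replacing $\nabla u \cdot \nabla d_\Omega$ by $G$. A secondary preliminary is the uniform Hopf bound $u \ge c\, d_\Omega$, which is classical but must be invoked before the final absorption can close.
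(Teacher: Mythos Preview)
Your proposal is correct and follows essentially the same approach as the paper. The only cosmetic difference is in part~(i): the paper organizes the computation via the product rule $\mathcal{P}(\phi\psi)=\psi\mathcal{P}\phi+\phi\mathcal{P}\psi-2\nabla\phi\cdot\nabla\psi$, computing $\mathcal{P}(u^a)$ and $\mathcal{P}(d_\Omega^b)$ separately, whereas you expand $\phi_t$, $\Delta\phi$ and the drift term directly; for part~(ii) your sign analysis of the cross terms and the absorption of the remainder into $-Md_\Omega$ match the paper exactly, and you in fact supply more detail for the uniform Hopf bound $u\ge c\,d_\Omega$ on $[T/2,T)$ than the paper does.
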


\begin{proof}
(i)  Recall that $d_\Omega\in C^2(\Omega_{\delta_0})$.
We compute, using \eqref{eqPu},
$$\mathcal P(u^a)=au^{a-1}\mathcal Pu-a(a-1)u^{a-2}|\nabla u|^2=-au^{a-1}\bigl[(p-1)G^p+(a-1)u^{-1}G^2-h(x)\bigr]$$
and
$$\mathcal P(d_\Omega^{b})=-b d_\Omega^{b-1}\bigl[\Delta d_\Omega+p|\nabla u|^{p-2}\nabla u\cdot \nabla d_\Omega\bigr]-b(b-1)|\nabla d_\Omega|^2d_\Omega^{b-2}.$$
Using the identity
$$\mathcal P(\phi\psi)=\psi\mathcal P\phi+\phi\mathcal P\psi-2\nabla\phi\cdot\nabla\psi$$
for any $C^{2,1}$ functions $\phi, \psi$, it follows that 
\begin{align*}
\mathcal P(u^ad_\Omega^{b})
&=-2abu^{a-1}d_\Omega^{b-1}\nabla u\cdot \nabla d_\Omega
-au^{a-1}d_\Omega^{b}\bigl[(p-1)G^p+(a-1)u^{-1}G^2-h(x)\bigr] \\
&\qquad -b u^ad_\Omega^{b-1}\bigl[\Delta d_\Omega+p|\nabla u|^{p-2}\nabla u\cdot \nabla d_\Omega\bigr]-b(b-1)|\nabla d_\Omega|^2d_\Omega^{b-2},
\end{align*}
hence \eqref{eqPu2},
 using
\begin{equation}\label{nabladOmega}
|\nabla d_{\Omega}|=1\quad\hbox{  in $\Omega_{\delta_0}$. }
\end{equation}

(ii) We have
$$|\Delta d_{\Omega}|\leq C(\Omega)\quad\hbox{  in $\Omega_{\delta_0}$.}$$
Also, by Hopf's Lemma, there exists $c_0>0$ such that
\begin{equation}\label{Hopfucd}
u\ge c_0d_\Omega\quad\hbox{ in $\Omega\times [T/2, T)$,}
\end{equation}
hence $|h(x)|d_\Omega^2u^{-1}\le c_0^{-1}\|h\|_\infty d_\Omega$.
 Recalling \eqref{nabladOmega}, it then follows from \eqref{eqPu2} with $a=p-1-\sigma>0$ and $b=2+\kappa-p<0$ that
\begin{align*}
&\dfrac{-\mathcal P\bigl[u^{p-1-\sigma}d_\Omega^{2+\kappa-p}\bigr]} {u^{p-1-\sigma}d_\Omega^{\kappa-p}}\\
&\quad\ge(p-1-\sigma)\Bigl[(p-1){d_\Omega^2G^p\over u}+(p-2-\sigma)\Bigl({d_\Omega G\over u}\Bigr)^2
-2(p-2-\kappa)\Bigl({d_\Omega G\over u}\Bigr)\Bigr]\\
&\qquad -p(p-2-\kappa)G^{p-1}d_\Omega+(p-1-\kappa)(p-2-\kappa)-Md_\Omega\\
&\quad\ge (p-1-\sigma)(p-2-\sigma)X^2-2(p-1-\sigma)(p-2-\kappa)X+(p-1-\kappa)(p-2-\kappa)\\
&\qquad +d_\Omega G^{p-1}\bigl[(p-1-\sigma)(p-1)X-p(p-2-\kappa)\bigr]-Md_\Omega
\end{align*}
in $\Omega_{\delta_0}\times [T/2, T)$, which proves assertion (ii).
\end{proof}

By suitably combining Lemma~\ref{generalcompu} with the gradient estimates of Section~2,
we shall prove the following key proposition, whose proof is rather technical
and which will enable us 
to construct our auxiliary functions.
Let us point out that the quantities on the LHS of \eqref{auxfctB1} and \eqref{auxfctB3} will provide the main terms in the auxiliary functions,
whereas that in \eqref{auxfctB2}, which has a different homogeneity and provides a negative contribution to $\mathcal{P}$,
will be used as a perturbation term.

\begin{proposition}\label{generalcompu2}  
Let $p\in (2,3]$ and let $\Omega$ be any smooth bounded domain.

(i)  For any $\kappa>0$ sufficiently small,  there exist $M,\delta>0$ (depending on $u$ and $\kappa$), such that
\begin{equation}\label{auxfctB2}
\mathcal{P}\Bigl[u^{p-1}d_{\Omega}^{2-p+\kappa}\Bigr]\le -c(p)\kappa u^{p-1}d_{\Omega}^{-p+\kappa} 
   \quad\textrm{in} \; \Omega_\delta\times [T/2, T)
    \end{equation}
where $c(p)>0$ and
    \begin{equation}\label{auxfctB1}
\mathcal{P}\Bigl[u^{p-1-\kappa}d_{\Omega}^{2-p+\kappa}\Bigr]\le M u^{p-1-\kappa}d_{\Omega}^{-p+\kappa+ 1} 
  \quad\textrm{in} \; \Omega_\delta\times [T/2, T).
   \end{equation}

 (ii)  Assume that $u$ satisfies
 \begin{equation}\label{hypsharpprofile2}
|\nabla u|\le d_\Omega^{-\beta}\bigl(d_p +Cd_\Omega^\alpha\bigr)\quad\hbox{ in $\Omega\times (0, T),$}
\end{equation}
for some $\alpha\in (0,\beta]$ and $C>0$.
Then there exist $M,\delta>0$ such that 
\begin{equation}\label{auxfctB3}
\mathcal{P}\Bigl[u^{p-1}d_{\Omega}^{2-p}\Bigr]\le 
 Mu^{p-1}d_\Omega^{-p+\alpha} 
  \quad\textrm{in} \; \Omega_\delta\times [T/2, T).
   \end{equation}
   \end{proposition}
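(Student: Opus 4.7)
The plan is to apply Lemma~\ref{generalcompu}(ii) with three different choices of its parameters $(\sigma,\kappa)$, namely $(0,\kappa)$ for \eqref{auxfctB2}, $(\kappa,\kappa)$ for \eqref{auxfctB1}, and $(0,0)$ for \eqref{auxfctB3}, and then to analyse the resulting lower bound
\[
\frac{-\mathcal P[u^{p-1-\sigma}d_\Omega^{2-p+\kappa}]}{u^{p-1-\sigma}d_\Omega^{\kappa-p}}\ge \Phi(X)-Md_\Omega,\quad \Phi(X):=F(X)+Y\bigl[(p-1-\sigma)(p-1)X-p(p-2-\kappa)\bigr],
\]
with $X=|\nabla u|d_\Omega/u$ and $Y:=d_\Omega G^{p-1}$. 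Completing the square yields
\[
F(X)=(p-1-\sigma)(p-2-\sigma)\Bigl[X-\tfrac{p-2-\kappa}{p-2-\sigma}\Bigr]^2+\tfrac{(p-2-\kappa)(\kappa-\sigma)}{p-2-\sigma},
\]
whose minimum equals $\approx\kappa$ for \eqref{auxfctB2}---supplying the required $c(p)\kappa$---and $0$ (attained at $X=1$) for \eqref{auxfctB1} and \eqref{auxfctB3}. The term $-Md_\Omega$ will be absorbed at the end by choosing $\delta$ small.

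Next I would bring in the Bernstein estimates to control $Y$, and indirectly $X$. Introducing the natural variables $g:=Gd_\Omega^\beta$ and $w:=u/d_\Omega^{1-\beta}$ yields $X=g/w$ and $Y=g^{p-1}$. For part~(ii), \eqref{hypsharpprofile2} gives $g\le d_p+Cd_\Omega^\alpha$, hence $Y\le 1/(p-1)+O(d_\Omega^\alpha)$; integrating $|\nabla u|$ along the inward normal from the boundary provides the matching upper bound $w\le c_p+O(d_\Omega^\alpha)$. For part~(i), the analogous reasoning based on \eqref{FiPucciSouplet} (with a free parameter $\eta>0$ still at our disposal) gives $g\le(1+\eta)d_p+o(1)$ and $w\le(1+\eta)c_p+o(1)$. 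In either case, combining these produces an effective constraint $Y\le \min\{Y_0,\, w_{\max}^{p-1}X^{p-1}\}$ whose crossover occurs \emph{exactly at} $X=d_p/c_p=1-\beta$; this matching is the algebraic imprint of the singular stationary profile $c_p d_\Omega^{1-\beta}$ and is what makes the whole construction work.

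The heart of the argument is a case analysis on $X$. Set $X_1:=p(p-2-\kappa)/[(p-1-\sigma)(p-1)]$, so that the bracket in $\Phi$ is negative precisely for $X<X_1$; one checks $1-\beta<X_1<1$ up to $O(\sigma,\kappa)$. In \emph{Case~1} ($X\le 1-\beta$) the binding bound is $Y\le w_{\max}^{p-1}X^{p-1}$, which forces $Y\to 0$ as $X\to 0$ and decouples the correction from the quadratic of $F$; a derivative check shows $\Phi$ is decreasing on $[0,1-\beta]$, attaining its minimum at $X=1-\beta$. In \emph{Case~2} ($1-\beta\le X\le X_1$) the binding bound is $Y\le Y_0$, and substituting $Y=Y_0$ yields a quadratic in $X$ with positive leading coefficient whose unconstrained minimiser $X^{**}=1-Y_0(p-1)/[2(p-2)]+O(\kappa)$ satisfies $X^{**}\le 1-\beta$ \emph{if and only if $p\le 3$}; consequently the infimum over $[1-\beta,X_1]$ is again attained at the endpoint $X=1-\beta$. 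In \emph{Case~3} ($X\ge X_1$) the bracket is nonnegative, hence $\Phi\ge F(X)\ge 0$. At the common endpoint $X=1-\beta$ a direct computation shows that $F(1-\beta)$ and $Y_0[(p-1-\sigma)(p-1)(1-\beta)-p(p-2-\kappa)]$ cancel to leading order; what survives is precisely $c(p)\kappa$ for \eqref{auxfctB2}, a positive $O(\kappa)$ remainder for \eqref{auxfctB1}, and the sharpness residue $O(d_\Omega^\alpha)$ inherited from \eqref{hypsharpprofile2} for \eqref{auxfctB3}. For part~(i) one moreover chooses $\eta$ small in terms of $\kappa$ so that the $(1+\eta)^{p-1}$-inflation of $Y_0$ does not swamp the $\kappa$-gain.

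The main obstacle is the algebraic analysis at $X=1-\beta$ in Case~2: this is where the leading terms of $\Phi$ balance exactly and one must extract the precise strength of the surviving perturbation. It is also here that $p\le 3$ is essential, as flagged in Remark~\ref{remGBU}(a): for $p>3$, the minimiser $X^{**}$ falls strictly inside the interval $[1-\beta,X_1]$ and the quadratic takes genuinely negative values there, breaking the scheme.
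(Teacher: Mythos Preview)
Your plan is correct and mirrors the paper's proof closely: the same three parameter choices for Lemma~\ref{generalcompu}(ii), the same case split on $X$ with threshold $X_1$ and the crucial crossover at $X=1-\beta=(p-2)/(p-1)$, the same use of \eqref{FiPucciSouplet} with $\eta=\eta(\kappa)$ in part~(i) versus the sharp estimate \eqref{hypsharpprofile2} in part~(ii), and the same identification of $p\le 3$ as the condition making the quadratic in Case~2 increasing from $X=1-\beta$.

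The one genuine difference is your treatment of the small-$X$ region (your Case~1, the paper's Case~3). You substitute the binding constraint $Y=w_{\max}^{p-1}X^{p-1}$ and argue that the resulting one-variable function is decreasing on $[0,1-\beta]$; the paper instead applies Young's inequality to the product $d_\Omega G^{p-1}=\bigl(\theta d_\Omega^{2(p-1)/p}G^{p-1}u^{-(p-1)/p}\bigr)\bigl(\theta^{-1}d_\Omega^{(2-p)/p}u^{(p-1)/p}\bigr)$ with exponent pair $(p/(p-1),p)$ and the specific choice $\theta=\bigl((p-1-\sigma)/(p-2-\kappa)\bigr)^{(p-1)/p}$, which cancels the $d_\Omega^2G^p/u$ term entirely and leaves a clean lower bound depending only on $d_\Omega^{2-p}u^{p-1}\le c_p^{p-1}(1+\eta)+O(d_\Omega^\alpha)$ and on $F(X)$. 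Your approach works (indeed, both summands of $\Phi'(X)$ are nonpositive on $[0,1-\beta]$ since the bracket $(p-1)X-(p-2)$ is), but the paper's Young trick avoids the derivative computation altogether and separates the analysis of $F(X)$ from that of the gradient terms. Either route leads to the same endpoint evaluation at $X=1-\beta$, where your description of the residues is accurate.
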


 We note that, in view of Theorems~\ref{convgradest} and \ref{shgradest},
assumption \eqref{hypsharpprofile2} is in particular satisfied with $\alpha\in(0,\frac{1}{2(p-1)})$ if $\Omega$ is a ball or an annulus,
or with $\alpha\in(0,\frac{3-p}{4(p-1)})$ if $\Omega$ is convex and $p<3$.

\begin{proof} We shall apply Lemma~\ref{generalcompu} with $\kappa\in [0,p-2)$ and $\sigma\in \{0,\kappa\}$.
We will eventually particularize the different choices: $\sigma=\kappa=0$ (for assertion (ii)),
$\sigma=\kappa>0$ (for \eqref{auxfctB1}), $\sigma=0$, $\kappa>0$ (for \eqref{auxfctB2}).
Set
\begin{equation}\label{deftildeP}
\tilde{\mathcal P}:= \dfrac{-\mathcal P\bigl[u^{p-1-\sigma}d_\Omega^{2+\kappa-p}\bigr]} {u^{p-1-\sigma}d_\Omega^{\kappa-p}}.
   \end{equation}
By Lemma~\ref{generalcompu}, we have
\begin{equation}\label{auxfctRecallLemma}
\tilde{\mathcal P}\ge F(X)+d_\Omega G^{p-1}\bigl[(p-1-\sigma)(p-1)X-p(p-2-\kappa)\bigr]-Md_\Omega
   \end{equation}
in $\Omega_{\delta_0}\times[T/2,T)$, with 
$$F(X)=(p-1-\sigma)(p-2-\sigma)X^2-2(p-1-\sigma)(p-2-\kappa)X+(p-1-\kappa)(p-2-\kappa).$$
By an elementary calculation, we note that
\begin{equation}\label{dGpelem1}
F(X)=(p-1-\kappa)(p-2-\kappa)[X-1]^2\quad\hbox{ if $\sigma=\kappa$},
\end{equation}
 and that 

\begin{equation}\label{dGpelem2}
\inf_{X\in\R} F(X)=\dfrac{\kappa(p-2-\kappa)}{p-2} \quad\hbox{ if $\sigma=0$.}
\end{equation}
Also, by \eqref{FiPucciSouplet}
 or \eqref{hypsharpprofile2}, we have
$$
|\nabla u|\leq d_{\Omega}^{-\beta}\bigl[(1+\eta) d_p+C(\eta)d_{\Omega}^\alpha\bigr]\quad\text{in}\ \Omega\times (0, T),
$$
with
\begin{equation}\label{dGp3}
\begin{cases}
&\alpha= \beta \hbox{and any $\eta>0$; or} \\ 
&\alpha \ \hbox{taken from (\ref{hypsharpprofile2}) and $\eta=0$, assuming (\ref{hypsharpprofile2}),}
\end{cases}
\end{equation}
and we will assume these conditions on $\alpha, \eta$ in what follows.

Integrating in the normal direction, we get
$$u\leq d_{\Omega}^{1-\beta}\bigl[(1+\eta) c_p+C(\eta)d_{\Omega}^\alpha\bigr]\quad\text{in}\quad\Omega\times (0, T).$$
Consequently,
\begin{equation}\label{dGp1}
d_{\Omega}G^{p-1}\le \dfrac{1+\eta}{p-1}+C(\eta)d_\Omega^\alpha \quad\text{in}\;\Omega\times [0, T)
\end{equation}
and
\begin{equation}\label{dGp2}
d_\Omega^{2-p}u^{p-1}\le c_p^{p-1}(1+\eta)+ C(\eta)d_{\Omega}^\alpha\quad\text{in}\;\Omega\times [0, T),
\end{equation}

We split the discussion into three subregions of the cylinder $Q:=\Omega_{\delta_0}\times [T/2, T)$ relative to the variable $X$.

\noindent{\bf CASE 1.} $\Sigma_1:=Q\cap\bigl\{X\ge\frac{p(p-2-\kappa)}{(p-1) (p-1-\sigma)}\bigr\}$. By \eqref{dGpelem1}-\eqref{dGpelem2}, 
we immediately have
\begin{equation}\label{LemauxfctB1}
\tilde{\mathcal P}\ge -Md_\Omega\quad\hbox{ in $\Sigma_1$ if $\sigma=\kappa$},
\end{equation}
and
\begin{equation}\label{LemauxfctB2}
\tilde{\mathcal P}\ge \dfrac{\kappa(p-2-\kappa)}{p-2} -Md_\Omega\quad\hbox{ in $\Sigma_1$ if $\sigma=0$}.
\end{equation}

\noindent{\bf CASE 2.}  $\Sigma_2:=Q\cap\bigl\{\frac{p-2}{p-1}\le X\le\frac{p(p-2-\kappa)}{(p-1) (p-1-\sigma)}\bigr\}$. 

By \eqref{dGp1}, for $\alpha,\eta$ as in \eqref{dGp3}, we have
\begin{align*}
\tilde{\mathcal P}
&\ge \Bigl({1+\eta\over p-1}+C(\eta)d_\Omega^\alpha\Bigr)\bigl[(p-1-\sigma)(p-1)X-p(p-2-\kappa)\bigr] + F(X)-Md_\Omega\\
&\ge {1+\eta\over p-1}\bigl[(p-1-\sigma)(p-1)X-p(p-2-\kappa)\bigr] + F(X)-M_1d_\Omega^\alpha,
\end{align*}
 hence
\begin{equation}\label{tildeP2}
 \tilde{\mathcal P}\ge\, (p-1-\sigma)(p-2-\sigma)G(X)-M_1d_\Omega^\alpha
\end{equation}
in $\Sigma_2$, where 
\begin{align*}
G(X)&=X^2-2{p-2-\kappa\over p-2-\sigma}X+{(p-1-\kappa)(p-2-\kappa)\over (p-1-\sigma)(p-2-\sigma)} \\
&\quad +{1+\eta\over p-1}\Bigl[{p-1\over p-2-\sigma}X-{p\over p-1-\sigma}{p-2-\kappa\over p-2-\sigma}\Bigr].
\end{align*}
 We now treat separately the three subcases relative to $\kappa,\sigma$.

{\bf 2.1.} Let us first consider the case $\kappa=\sigma=0$,
 under assumption (\ref{hypsharpprofile2})  (so that we can take $\eta=0$).
We have 
$$G\textstyle\bigl(\frac{p-2}{p-1}\bigr)={1\over (p-1)^2}+{1\over p-1}\bigl(1-{p\over p-1})=0,
\qquad
G'\textstyle\bigl(\frac{p-2}{p-1}\bigr)={-2\over p-1}+{1\over p-2}={3-p\over (p-1)(p-2)}\ge 0,$$
due to $p\le 3$.\footnote{This is where the restriction $p\le 3$ in our results crucially enters.
If $p>3$, it does not seem possible to construct an auxiliary function for which the parabolic operator ${\mathcal P}$
fulfills the required sign properties in order to apply the maximum principle and obtain the correct GBU rate $1/(p-2)$.}
 Since $G$ is convex, it follows that $G(X)\ge 0$ for all $X\ge\frac{p-2}{p-1}$, hence
\begin{equation}\label{LemauxfctB3}
\tilde{\mathcal P}\ge -M_1d_\Omega^\alpha\quad\hbox{ in $\Sigma_2$ for $\kappa=\sigma=0$, with $\eta=0$.}
\end{equation}

{\bf 2.2.} Next consider the case $0<\sigma=\kappa<p-2$. We have
$$
G(X)=(X-1)^2+{1+\eta\over p-1}\Bigl[{p-1\over p-2-\kappa}X-{p\over p-1-\kappa}\Bigr],
$$
hence
$$
G\textstyle\bigl(\frac{p-2}{p-1}\bigr)
={1\over (p-1)^2}+{1+\eta\over p-1}\bigl[{p-2\over p-2-\kappa}-{p\over p-1-\kappa}\bigr]\\
={1\over (p-1)^2}\bigl\{1-(1+\eta){(p-1)(p-2-2\kappa)\over (p-2-\kappa)(p-1-\kappa)}\bigr\}.
$$
Choosing $\eta={\kappa\over 2(p-1)(p-2)}$ and using
\begin{align*}
(1+\eta)\textstyle{(p-1)(p-2-2\kappa)\over (p-2-\kappa)(p-1-\kappa)}
&=\textstyle\bigl(1+{\kappa\over 2(p-1)(p-2)}\bigr)\bigl(1-{\kappa\over p-1}\bigr)^{-1}\bigl(1-{\kappa\over p-2}\bigr)^{-1}\bigl(1-{2\kappa\over p-2}\bigr) \\
&=1-\textstyle{\kappa\over 2(p-1)(p-2)}+O(\kappa^2), \quad\hbox{ as $\kappa\to 0^+$,}
\end{align*}
it follows that $G\bigl(\frac{p-2}{p-1}\bigr)\ge {\kappa\over 3(p-1)^3(p-2)}$ for $\kappa>0$ sufficiently small. On the other hand we have
$G'\bigl(\frac{p-2}{p-1}\bigr)\ge  -{2\over p-1}+{1\over p-2}= {3-p\over (p-1)(p-2)}\ge 0$, due to $p\le 3$.
 Since $G$ is convex, it follows that $G(X)\ge {\kappa\over 3(p-1)^3(p-2)}$ for all $X\ge\frac{p-2}{p-1}$ hence, by \eqref{tildeP2},
\begin{equation}\label{LemauxfctB4}
\tilde{\mathcal P}\ge {\kappa\over 6(p-1)^2}-M_1d_\Omega^\alpha\quad\hbox{ in $\Sigma_2$ with $\kappa=\sigma>0$} small.
\end{equation}

 {\bf 2.3.} Then consider the case $\sigma=0<\kappa<p-2$. We have
$$G(X)=X^2-\textstyle{2(p-2-\kappa)\over p-2}X+{(p-1-\kappa)(p-2-\kappa)\over (p-1)(p-2)} 
+{1+\eta\over p-1}\bigl[{p-1\over p-2}X-{p(p-2-\kappa)\over (p-1)(p-2)}\bigr],$$
hence
\begin{align*}
\textstyle G({p-2\over p-1})
&=\textstyle\bigl({p-2\over p-1}\bigr)^2-{2(p-2-\kappa)\over p-1}+{(p-1-\kappa)(p-2-\kappa)\over (p-1)(p-2)}  
+{1+\eta\over p-1}\bigl[1-{p(p-2-\kappa)\over (p-1)(p-2)}\bigr] \\
&\ge\textstyle\bigl({p-2\over p-1}\bigr)^2-{2(p-2)\over p-1}+{2\kappa\over p-1}+1-{(2p-3)\kappa\over (p-1)(p-2)}-{1\over (p-1)^2}-{\eta\over (p-1)^2} 
+{p\kappa\over (p-1)^2(p-2)}\\
& =\textstyle{(p-2)^2-2(p-2)(p-1)+ (p-1)^2-1\over (p-1)^2}+{2(p-1)(p-2)-(2p-3)(p-1)+p\over (p-1)^2(p-2)} \kappa-{\eta\over (p-1)^2} \\
&=\textstyle
{\kappa\over (p-1)^2(p-2)}-{\eta\over (p-1)^2}\ge {\kappa\over 2(p-1)^2(p-2)}
\end{align*}
upon choosing $\eta={\kappa\over 2(p-2)}$,
and
$$
\textstyle G'({p-2\over p-1})
=\textstyle {2(p-2)\over p-1}-{2(p-2-\kappa)\over p-2}+{1+\eta\over p-2}
\ge\textstyle {2(p-2)\over p-1}-{2(p-2)\over p-2}+{1\over p-2}={3-p\over (p-1)(p-2)}\ge 0.
$$
due to $p\le 3$. It follows from \eqref{tildeP2} that
\begin{equation}\label{LemauxfctB4b}
\tilde{\mathcal P}\ge {\kappa\over 2(p-1)}-M_1d_\Omega^\alpha\quad\hbox{ in $\Sigma_2$ with $\kappa>0$ small and $\sigma=0$.} 
\end{equation}

\noindent{\bf CASE 3.}  $\Sigma_3:=Q\cap\bigl\{X\le \frac{p-2}{p-1}\bigr\}$. Rewrite \eqref{auxfctRecallLemma} as
\begin{equation}\label{LemauxfctCase3}
\tilde{\mathcal P}\ge F(X)+(p-1-\sigma)(p-1){d_\Omega^2G^p\over u} -p(p-2-\kappa) d_\Omega G^{p-1}-Md_\Omega.
\end{equation}
Using Young's inequality, for any $\theta>0$, we estimate the third term of the RHS of \eqref{LemauxfctCase3} by
$$d_\Omega G^{p-1}=\Bigl({\theta d_\Omega^{2(p-1)/p}G^{p-1}\over u^{(p-1)/p}}\Bigr) \Bigl({d_\Omega^{(2-p)/p}u^{(p-1)/p}\over \theta }\Bigr)
\le {p-1\over p}\theta^{p/(p-1)} {d_\Omega^2G^p\over u} +{1\over p}{d_\Omega^{2-p}u^{p-1}\over \theta^p},$$
hence
$$p(p-2-\kappa) d_\Omega G^{p-1}\le (p-1)(p-2-\kappa) \theta^{p/(p-1)} {d_\Omega^2G^p\over u} +(p-2-\kappa){d_\Omega^{2-p}u^{p-1}\over \theta^p}.$$
Choose $\theta=\bigl({p-1-\sigma\over p-2-\kappa}\bigr)^{(p-1)/p}$. 
By \eqref{dGp2} and \eqref{defcp}, for $\alpha,\eta$ as in \eqref{dGp3}, we have
\begin{align*}
(p-1-\sigma)(p-1) &\dfrac{d_\Omega^2 G^p}{u}-p(p-2-\kappa)d_\Omega G^{p-1}\\
&\ge -(p-2-\kappa)\Bigl({p-2-\kappa\over p-1-\sigma}\Bigr)^{p-1}d_\Omega^{2-p}u^{p-1}\\
&\ge  -(p-2-\kappa)\Bigl({p-2-\kappa\over p-1-\sigma}\Bigr)^{p-1}(c_p^{p-1}(1+\eta)+  Cd_\Omega^\alpha)\\ 
&\ge  -(1+\eta){p-2-\kappa\over p-1}\Bigl[{(p-2-\kappa)(p-1)\over (p-1-\sigma)(p-2)}\Bigr]^{p-1}- Cd_\Omega^\alpha. 
\end{align*}
 Therefore, 
$$\tilde{\mathcal P}\ge \tilde{\mathcal P}_0-Md^\alpha_\Omega,\ \hbox{ with } 
\tilde{\mathcal P}_0=F(X)-(1+\eta){p-2-\kappa\over p-1}\Bigl[{(p-2-\kappa)(p-1)\over (p-1-\sigma)(p-2)}\Bigr]^{p-1}.$$
 We again treat separately the three subcases relative to $\kappa,\sigma$.

{\bf 3.1.} Let us first consider the case $\kappa=\sigma=0$,
 under assumption (\ref{hypsharpprofile2})  (so that we can take $\eta=0$).
In view of \eqref{dGpelem1} we have, for $0\le X\le \frac{p-2}{p-1}$,
$$\tilde{\mathcal P}_0\ge  (p-1)(p-2)\Bigl[1-\frac{p-2}{p-1}\Bigr]^2-{p-2\over p-1} =0,$$
hence
\begin{equation}\label{LemauxfctB5}
\tilde{\mathcal P}\ge  -M_1d_\Omega^\alpha\quad\hbox{ in $\Sigma_3$  for $\kappa=\sigma=0$, with $\eta=0$.}
\end{equation}

{\bf 3.2.} Next consider the case $0<\kappa=\sigma<p-2$.
 Using again \eqref{dGpelem1} we have, for $0\le X\le \frac{p-2}{p-1}$, 
\begin{align*}
\tilde{\mathcal P}_0
&\ge {(p-1-\kappa)(p-2-\kappa)\over (p-1)^2}
-(1+\eta){p-2-\kappa\over p-1}\Bigl[{(p-2-\kappa)(p-1)\over (p-1-\kappa)(p-2)}\Bigr]^{p-1}\\
&= {(p-1-\kappa)(p-2-\kappa)\over (p-1)^2}
\Bigl[1-(1+\eta)\Bigl({p-1\over p-1-\kappa}\Bigr)^p\Bigl({p-2-\kappa\over p-2}\Bigr)^{p-1}\Bigr] =: c_0(\kappa).
\end{align*}
Choosing $\eta={\kappa\over 2(p-1)(p-2)}$ and using
\begin{align*}
(1+\eta)\bigl(\textstyle{p-1\over p-1-\kappa}\bigr)^p\bigl({p-2-\kappa\over p-2}\bigr)^{p-1}
&=\bigl(1+\textstyle{\kappa\over 2(p-1)(p-2)}\bigr)\bigl(1-{\kappa\over p-1}\bigr)^{-p}\bigl(1-{\kappa\over p-2}\bigr)^{p-1}\\
&=1-\textstyle{\kappa\over 2(p-1)(p-2)} +O(\kappa^2),
\quad\hbox{ as $\kappa\to 0^+$,}
\end{align*}
we obtain
$$c_0(\kappa)
\ge  {(p-1-\kappa)(p-2-\kappa)\over (p-1)^2}{\kappa\over 3(p-1)(p-2)} 
\ge  {\kappa\over 6(p-1)^2},\quad\hbox{ as $\kappa\to 0^+$,}
$$
hence
\begin{equation}\label{LemauxfctB6}
\tilde{\mathcal P}\ge {\kappa\over 6(p-1)^2}-M_1d_\Omega^\alpha\quad\hbox{ in $\Sigma_3$ with $\kappa=\sigma>0$ small.}
\end{equation}

 {\bf 3.3.} Then consider the case $0<\kappa<p-2$ and $\sigma=0$.
Since now
$$F(X)=(p-1)(p-2)X^2-2(p-1)(p-2-\kappa)X+(p-1-\kappa)(p-2-\kappa),$$
we have
$$F'(\textstyle\frac{p-2}{p-1})=2(p-2)^2-2(p-1)(p-2-\kappa)=2(2-p+(p-1)\kappa)<0$$
for $\kappa>0$ small. Since $F$ is convex, it follows that, for $0\le X\le \frac{p-2}{p-1}$,
\begin{align*}
F(X)\ge F(\textstyle\frac{p-2}{p-1})
&=\textstyle{(p-2)^3\over p-1}-2(p-2)(p-2-\kappa)+(p-1-\kappa)(p-2-\kappa)\\
&=\textstyle{(p-2)^3\over p-1}+(p-2-\kappa)(3-p-\kappa)
\ge {p-2\over p-1}-\kappa,
\end{align*}
hence
\begin{align*}
\tilde{\mathcal P}_0
&\ge \textstyle{p-2\over p-1}-\kappa
-(1+\eta){p-2-\kappa\over p-1}\bigl({p-2-\kappa\over p-2}\bigr)^{p-1} 
= \textstyle{p-2\over p-1}-\kappa
-(1+\eta){p-2\over p-1}\bigl(1-{\kappa\over p-2}\bigr)^p\\
& \ge \textstyle{p-2\over p-1}-\kappa
-(1+\eta)\bigl({p-2\over p-1}- {p\over p-1}\kappa+O(\kappa^2)\bigr)\ge \textstyle{1\over p-1}\kappa- \textstyle{p-2\over p-1}\eta+O(\kappa^2),
\end{align*}
as $\kappa\to 0^+$. Choosing $\eta=\textstyle{\kappa\over 2(p-2)}$ we obtain 
\begin{equation}\label{LemauxfctB6b}
\tilde{\mathcal P}\ge {\kappa\over 2(p-1)}-M_1d_\Omega^\alpha\quad\hbox{ in $\Sigma_3$ for $\kappa>0$ small and $\sigma=0$.}
\end{equation}

Collecting formulae \eqref{LemauxfctB1}-\eqref{LemauxfctB4b} and \eqref{LemauxfctB5}-\eqref{LemauxfctB6b}, we conclude that:

\noindent  $\bullet$ For $\sigma=\kappa=0$,  assuming (\ref{hypsharpprofile2}), and $\delta>0$ sufficiently small,
 $$ \tilde{\mathcal P}\ge  -M_1d_\Omega^\alpha\quad\hbox{ in $\Omega_\delta\times [T/2, T)$};$$
\par\noindent $\bullet$ For $\sigma=\kappa>0$ sufficiently small and $\delta=\delta(\kappa)>0$ sufficiently small,
 $$\tilde{\mathcal P}\ge  -M_1 d_\Omega\quad\hbox{ in $\Omega_\delta\times [T/2, T)$};$$ 
\par\noindent $\bullet$ For $\sigma=0$, $\kappa>0$ sufficiently small and $\delta=\delta(\kappa)>0$ sufficiently small,
 $$\tilde{\mathcal P}\ge  c(p)\kappa\quad\hbox{ in $\Omega_\delta\times [T/2, T)$},$$
with $c(p)>0$. 
 In view of \eqref{deftildeP},  this yields the conclusions of the proposition.
\end{proof}

\subsection{Proof of Theorems \ref{Thmain1}-\ref{Thmain1b}  and of Proposition~\ref{Thmain1gen}}

 We only need to establish Theorem \ref{Thmain1b} and Proposition~\ref{Thmain1gen}.
Indeed, Theorem \ref{Thmain1} is a consequence of Proposition~\ref{Thmain1gen} since,
in view of Theorems~\ref{convgradest} and \ref{shgradest},
assumption \eqref{hypsharpprofile} of Proposition~\ref{Thmain1gen} is satisfied for $\alpha\in(0,\frac{1}{2(p-1)})$ if $\Omega$ is a ball or an annulus,
or for $\alpha\in(0,\frac{3-p}{4(p-1)})$ if $\Omega$ is convex and $p<3$.

We define the following auxiliary function
$$J=u_t-\eps H_1,\qquad H_1:=u^{p-1-\kappa}d_{\Omega}^{2-p+\kappa}\bigr[1+u^\kappa\bigl]$$
for general domains, and
$$J=u_t-\eps H_2,\qquad H_2:= u^{p-1}d_{\Omega}^{2-p}\bigr[1+d_\Omega^\kappa\bigl]$$
for convex domains or annuli,
with $\eps>0$ to be determined.

 For any $\kappa\in (0,p-2)$ sufficiently small, we deduce from Proposition~\ref{generalcompu2} 
that there exist $\delta_\kappa,M_\kappa,M>0$ such that 
\begin{align*}
\mathcal{P}H_1
&=\mathcal{P}\bigl(u^{p-1-\kappa}d_{\Omega}^{2-p+\kappa}\bigr)+\mathcal{P}\bigl(u^{p-1}d_{\Omega}^{2-p+\kappa}\bigr)\\
&\le  M_\kappa u^{p-1-\kappa}d_{\Omega}^{-p+\kappa+1}-c(p)\kappa u^{p-1}d_{\Omega}^{-p+\kappa} 
\le   M_\kappa c_0^{-\kappa}u^{p-1}d_{\Omega}^{-p+1}-c(p)\kappa u^{p-1}d_{\Omega}^{-p+\kappa}  \\
&\le u^{p-1}d_{\Omega}^{-p+\kappa}\bigl[ M_\kappa c_0^{-\kappa}d_{\Omega}^{1-\kappa}-c(p)  \kappa\bigl]<0
\quad\hbox{ in $\Omega_{\delta_\kappa}\times [T/2, T)$,}
\end{align*}
where we also used \eqref{Hopfucd}, and
\begin{align*}
\mathcal{P}H_2
&=\mathcal{P}\bigl(u^{p-1}d_{\Omega}^{2-p}\bigr)+\mathcal{P}\bigl(u^{p-1}d_{\Omega}^{2-p+\kappa}\bigr)
\le M u^{p-1}d_{\Omega}^{-p+\alpha}-c(p)\kappa u^{p-1}d_{\Omega}^{-p+\kappa}  \\
&\le u^{p-1}d_{\Omega}^{-p+\kappa}\bigl[M d_{\Omega}^{\alpha-\kappa}-c(p)  \kappa\bigl]<0
\quad\hbox{ in $\Omega_{\delta_\kappa}\times [T/2, T)$.}
\end{align*}
Since $\mathcal{P}(u_t)=0$, it follows that 
\begin{equation}\label{PMJ0}
\mathcal{P}J>0\quad\hbox{ in $\Omega_{\delta_\kappa}\times [T/2, T)$.}
\end{equation}

We next examine the initial-boundary conditions for $J$. Recall that  $u_t=0$  on $\partial\Omega\times (0, T)$.
For each $t\in (0, T)$, we have
$$  
u^{p-1}d_\Omega^{2-p}\le \|\nabla u(t)\|_\infty^{p-1} d_\Omega,\quad
u^{p-1-\kappa}d_\Omega^{2+\kappa-p}\le \|\nabla u(t)\|_\infty^{p-1-\kappa} d_\Omega,
$$ 
so that, after extension by continuity, we have
\begin{equation}\label{PMJ2}
J=0 \quad\hbox{ on $\partial\Omega\times (0, T)$.}
\end{equation}
From  assumption \eqref{hypmonot} (or, more generally, \eqref{hypmonot3b}) and since,
as a consequence of \eqref{Bernstein0} and parabolic estimates, 
$u$ extends to a classical solution in $\Omega\times (0,T]$,
it follows from the strong maximum principle that $u_t>0$ in $\Omega_\eta\times (t_0,T]$,
 with $t_0:=\min(T/2,T-\eta)$.
Therefore, for any $\delta>0$ small, there exists $\bar c(\delta)>0$ such that 
\begin{equation}\label{PMJ3}
u_t\geq \bar c(\delta) \quad\hbox{ on $\left\{x\in \Omega, d_\Omega(x)= \delta\right\}\times (t_0, T)$}
\end{equation}
and, by Hopf's Lemma, there exist $\hat c, \hat\delta>0$ such that
\begin{equation}\label{PMJ4}
u_t(\cdot,t_0)\ge \hat c d_\Omega \quad\hbox{ in $\Omega_{\hat\delta}$.}
\end{equation}
Also, it follows from \eqref{FiPucciSouplet} and \eqref{Hopfucd} that there exists a constant $C_1>0$ (independent of $\kappa$ small)
such that
\begin{equation}\label{PMJ5}
H_1, H_2\le C_1 \quad\hbox{ in $\Omega\times [T/2, T)$.}
\end{equation}
 Choosing $\eps>0$ sufficiently small (depending on $\kappa$), it follows from \eqref{PMJ2}-\eqref{PMJ5} that
\begin{equation}\label{PMJ6}
J\geq 0\quad\textrm{on}\; \partial_p(\Omega_{\delta_\kappa}\times [t_0, T)),
\end{equation}
where $\partial_p$ denotes the parabolic boundary.

Now, in view of \eqref{PMJ0}, \eqref{PMJ6} we may apply the maximum principle to deduce that 
$J\geq 0$ in $\Omega_{\delta_\kappa}\times (t_0, T)$. Dividing this inequality by $d_\Omega$, we get that 
\begin{equation}\label{utuqd}
\dfrac{u_t}{d_\Omega}\geq \eps\Bigl(\dfrac{u}{d_\Omega}\Bigr)^q \quad\hbox{ in $\Omega_{\delta_\kappa}\times (t_0, T)$},
\end{equation}
where
$$
q=\begin{cases}
\ p-1-\kappa, &\hbox{  for any small $\kappa>0$, in the case of Theorem \ref{Thmain1b}  
 (with $\eps=\eps(\kappa)$), }\\
\ p-1, &\hbox{ in the case of Proposition~\ref{Thmain1gen}}
\end{cases}
$$
(and in the latter case we have used for $H_2$ a single value of $\kappa>0$ small).
It follows that
$$\partial_t u_\nu\geq \eps (u_\nu)^q \quad\hbox{ on $\partial\Omega\times (t_0, T)$}.$$
Indeed, for any $\bar x\in\partial\Omega$ it suffices to apply \eqref{utuqd} with $x=\bar x+s \nu(\bar x)$, so that $d_\Omega(x)=s$,
and to use $\underset{s\to 0}{\lim} u_t(\bar x+s \nu(\bar x),t)/s=\partial_tu_\nu(\bar x, t)$.
By integration, it follows that
$$u_\nu(x,t)\leq \bigl[(q-1)\eps(T-t)\bigr]^{-\frac{1}{q-1}} \quad\hbox{ on $\partial\Omega\times (t_0, T)$}.$$
Finally, we know (see \cite[Proposition~2.3]{SZ})
 that, as a consequence of the maximum principle,
$$\sup_{\overline\Omega\times [t_0,t]} |\nabla u|\le \max\Bigl\{\|\nabla u(t_0)\|_\infty, \sup_{\partial\Omega\times [t_0,t]} |u_\nu|\Bigr\}
+T\|\nabla h\|_\infty, \quad t_0<t<T,$$
hence
$$\|\nabla u(t)\|_\infty\le \|\nabla u(t_0)\|_\infty+T\|\nabla h\|_\infty+\bigl[(q-1)\eps(T-t)\bigr]^{-\frac{1}{q-1}},\quad t_0<t<T,$$
and the conclusion follows.

\begin{remark}\label{remaltproof}
 Theorem \ref{Thmain1b} can alternatively be proved by using an auxilary function of the form
\begin{equation}\label{alternativeJ}
J(x,t):= u_t-\eps \bigr[u|\nabla u|^{p-2}+u^{p-1}d_{\Omega}^{2-p+\kappa}\bigl]
\end{equation}
(the rest of the proof being otherwise similar).
However this alternative method does not seem to provide the sharp rate in Theorem~\ref{Thmain1}.
Also, instead of the gradient term in \eqref{alternativeJ}, one can more generally consider a term of the form 
$$d_\Omega^{1-a}u^a|\nabla u|^{p-1-a}$$ 
with a parameter $a\in\R$. But it does not seem possible to remove the restriction $p\le 3$ in this way.
\end{remark}

\section{More singular rate for minimal GBU solutions: proof of Theorem~\ref{thmfastratemain}}\label{sect5}

Theorem~\ref{thmfastratemain} will be proved through a series of lemmas (the first two are valid in any space dimension).
The first auxiliary lemma gives a gradient estimate for a linear heat equation with drift.
The argument of proof is the same as in \cite{QSbook07, PS3}, but we here need the estimate in a more quantitative form. 

\begin{lemma}
\label{gradbound1}
Let $t_0<t_1$, let the vector field $B$ be H\"older continuous on 
$\overline\Omega\times[t_0,t_1]$ and let $w\in C(\overline\Omega\times[t_0,t_1])\cap C^{2,1}(\overline\Omega\times(t_0,t_1])$
be a classical solution of 
\be
\left\{\begin{array}{llll}
\hfill w_t-\Delta w&=&B\cdot \nabla w,& x\in\Omega,\ t_0< t\le t_1,
\vspace{1mm} \\
\hfill w&=&0,& x\in\partial\Omega,\ t_0\le t\le t_1,
\end{array}
\right.
\ee
such that
$$|w|\le M_1,\quad |B|\le M_2 \quad\hbox{ in ${\Omega\times[t_0,t_1]}$. }$$
Then 
\be \label{estimgradw}
\|\nabla w(\cdot,t_1)\|_\infty\le C(\Omega)M_1\max\bigl((t_1-t_0)^{-1/2},M_2\bigr).
\ee
\end{lemma}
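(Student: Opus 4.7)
The plan is to argue via a Duhamel representation for $w$ based on the Dirichlet heat semigroup $\{S(t)\}_{t\ge 0}$ on $\Omega$, combined with the standard $L^\infty$-gradient bound
$$\|\nabla S(t)f\|_{\infty}\le C(\Omega)\,t^{-1/2}\|f\|_{\infty},\qquad t>0,\ f\in L^\infty(\Omega),$$
which is valid since $\partial\Omega$ is of class $C^{2+\rho}$. Writing $w(\cdot,t)=S(t-t_0)w(\cdot,t_0)+\int_{t_0}^{t}S(t-s)(B\cdot\nabla w)(\cdot,s)\,ds$ for $t\in(t_0,t_1]$ and setting $N(t):=\|\nabla w(\cdot,t)\|_\infty$, the pointwise bound $|B\cdot\nabla w|\le M_2 N$ together with the above semigroup estimate yields the Volterra-type inequality
$$N(t)\le C(t-t_0)^{-1/2}M_1+CM_2\int_{t_0}^{t}(t-s)^{-1/2}N(s)\,ds,\qquad t\in(t_0,t_1].$$
The singularity at $s=t_0$ is integrable, so $\psi(t):=(t-t_0)^{1/2}N(t)$ is bounded on $(t_0,t_1]$ (this is already a consequence of the Duhamel formula applied to $w(\cdot,t_0)\in C(\overline\Omega)$).

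Multiplying the integral inequality by $(t-t_0)^{1/2}$, using $N(s)=(s-t_0)^{-1/2}\psi(s)$, and invoking the Beta-function identity $\int_{t_0}^{t}(t-s)^{-1/2}(s-t_0)^{-1/2}\,ds=\pi$, one obtains
$$\psi(t)\le CM_1+\pi C M_2\,(t-t_0)^{1/2}\sup_{[t_0,t]}\psi,\qquad t\in(t_0,t_1].$$
Taking the supremum over $[t_0,t]$ and fixing a universal constant $c_0>0$ with $\pi Cc_0\le 1/2$, I would then split the argument into two cases according to the size of $\tau:=t_1-t_0$ relative to $(c_0/M_2)^2$. If $\tau\le(c_0/M_2)^2$, the drift term is absorbed into the left-hand side, giving $\sup_{[t_0,t_1]}\psi\le 2CM_1$ and hence $N(t_1)\le 2CM_1\tau^{-1/2}$. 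If instead $\tau>(c_0/M_2)^2$, the previous case is applied on the subinterval $[t_1-(c_0/M_2)^2,t_1]$, where the hypothesis $|w|\le M_1$ is still available, yielding $N(t_1)\le 2CM_1\,(M_2/c_0)$.

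Combining the two cases produces the desired estimate $N(t_1)\le C(\Omega)M_1\max\bigl((t_1-t_0)^{-1/2},M_2\bigr)$. The argument is essentially routine once the semigroup gradient bound is granted, and I do not anticipate a genuine obstacle; the only mildly delicate point is controlling $\psi$ uniformly up to $t=t_0$ so that the supremum manipulation in the Volterra inequality is rigorous, which is handled by the Duhamel representation itself together with $w(\cdot,t_0)\in C(\overline\Omega)$ vanishing on $\partial\Omega$.
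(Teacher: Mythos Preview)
Your proof is correct and is essentially the same as the paper's: both use the Duhamel representation with the Dirichlet heat semigroup, the $L^\infty$--$L^\infty$ gradient smoothing estimate $\|\nabla S(t)f\|_\infty\le C t^{-1/2}\|f\|_\infty$, and an absorption argument on a time interval of length at most $\sim M_2^{-2}$. The only cosmetic difference is organizational: the paper introduces a variable starting time $s=\max\bigl(t_0,\,t_1-(C_2M_2)^{-2}\bigr)$ from the outset and works with the single quantity $K=\sup_\sigma \sigma^{1/2}\|\nabla w(s+\sigma)\|_\infty$, whereas you first treat the short-interval case and then restart on $[t_1-(c_0/M_2)^2,t_1]$; these are equivalent. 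One small advantage of the paper's formulation is that taking $s>t_0$ makes the finiteness of $K$ immediate from $w\in C^{2,1}(\overline\Omega\times(t_0,t_1])$, which is the point you flag as ``mildly delicate'' for $\psi$ near $t_0$.
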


\begin{proof}[Proof of Lemma~\ref{gradbound1}]
Let $s\in [t_0,t_1)$, and put
$$K=\sup_{\sigma\in [0,t_1-s]} \sigma^{1/2}\|\nabla w(s+\sigma)\|_\infty.$$
For $\tau\in (0,t_1-s)$, by the variation-of-constants formula, we have
$$w(s+\tau)=e^{\tau\Delta}w(s)+
\int_0^\tau e^{(\tau-\sigma)\Delta}(B\cdot\nabla w)(s+\sigma)\, d\sigma,$$
where $(e^{t\Delta})_{t\ge 0}$ denotes the Dirichlet heat semigroup on $\Omega$.
Using standard smoothing properties of $e^{t\Delta}$
and the fact that
$$\int_0^\tau (\tau-\sigma)^{-1/2}\sigma^{-1/2}\, d\sigma=\int_0^1 (1-z)^{-1/2}z^{-1/2}\, dz,$$
it follows that
$$\begin{aligned}
\|\nabla w(s+\tau)\|_\infty
&\leq C_1\tau^{-1/2}\|w(s)\|_\infty+
C_1 \int_0^\tau (\tau-\sigma)^{-1/2}\|B\cdot\nabla w(s+\sigma)\|_\infty\, d\sigma\\
&\leq C_1M_1\tau^{-1/2}+C_2M_2K.
\end{aligned}$$
where the constants $C_1, C_2>0$ depend only on $\Omega$.
Multiplying by $\tau^{1/2}$ and taking the supremum for $\tau\in [0,t_1-s]$, we obtain
$$K\leq C_1M_1+C_2(t_1-s)^{1/2}M_2K.$$
Now choosing $s=\max\bigl(t_0,t_1-(C_2M_2)^{-2})$, hence $C_2(t_1-s)^{1/2}M_2\le 1/2$, we obtain
$K\leq 2C_1M_1$. Therefore,
$$\|\nabla w(\cdot,t_1)\|_\infty\leq 2C_1M_1(t_1-s)^{-1/2}\leq 2C_1M_1\max((t_1-t_0)^{-1/2},C_2M_2),$$
which proves the Lemma. 
\end{proof}

Our next lemma states a similar conclusion for the normal derivative on the boundary, 
but assuming only an upper bound on the solution.

\begin{lemma}
\label{gradbound2}
Let $t_0<t_1$, let $B$ be H\"older continuous on $\overline\Omega\times[t_0,t_1]$ 
and let $v\in C(\overline\Omega\times[t_0,t_1])\cap C^{2,1}(\overline\Omega\times(t_0,t_1])$
be a classical solution of 
\be \label{eqnvgrad}
\left\{\begin{array}{llll}
\hfill v_t-\Delta v&=&B\cdot \nabla v,& x\in\Omega,\ t_0< t\le t_1,
\vspace{1mm} \\
\hfill v&=&0,& x\in\partial\Omega,\ t_0\le t\le t_1,
\end{array}
\right.
\ee
such that
$$v\le M_1 \quad\hbox{ in ${\Omega\times[t_0,t_1]}$}$$
and
$$|B|\le M_2 \quad\hbox{ in ${\Omega\times[t_0,t_1]}$. }$$
Then 
\be \label{estimvnu}
\frac{\partial v}{\partial \nu}(\cdot,t_1)\le C(\Omega)M_1\max\bigl((t_1-t_0)^{-1/2},M_2\bigr)
\quad\hbox{ on $\partial\Omega$. }
\ee
\end{lemma}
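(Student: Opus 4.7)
The difficulty is that we only have the one-sided bound $v\le M_1$, so Lemma~\ref{gradbound1} does not apply to $v$ directly. My plan is to construct an auxiliary solution $U$ of the same linear problem that dominates $v$ and satisfies a two-sided bound $|U|\le M_1$, then to apply Lemma~\ref{gradbound1} to $U$, and finally to compare normal derivatives on the boundary.

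Since $v$ is classical on $\overline\Omega\times[t_0,t_1]$ with $v=0$ on $\partial\Omega\times[t_0,t_1]$, the function $v(\cdot,t_0)^+$ is continuous on $\overline\Omega$, vanishes on $\partial\Omega$, and takes values in $[0,M_1]$. Standard linear parabolic theory (with H\"older continuous drift $B$ and $C^{2+\rho}$ domain) then provides a classical solution $U\in C(\overline\Omega\times[t_0,t_1])\cap C^{2,1}(\overline\Omega\times(t_0,t_1])$ of \eqref{eqnvgrad} with $U=0$ on $\partial\Omega\times[t_0,t_1]$ and $U(\cdot,t_0)=v(\cdot,t_0)^+$. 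Three applications of the maximum principle (comparing $U$ with $0$, with the constant $M_1$, and with $v$ respectively) then yield $0\le U\le M_1$ and $U\ge v$ throughout $\overline\Omega\times[t_0,t_1]$; the last comparison uses that $U$ and $v$ solve the same linear equation with the same boundary values and that $U(\cdot,t_0)=v(\cdot,t_0)^+\ge v(\cdot,t_0)$.

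With $|U|\le M_1$ in hand, Lemma~\ref{gradbound1} applied to $U$ gives
$$\|\nabla U(\cdot,t_1)\|_\infty\le C(\Omega)M_1\max\bigl((t_1-t_0)^{-1/2},M_2\bigr).$$
To conclude, fix $x_0\in\partial\Omega$; the nonnegative function $W:=U-v$ vanishes at $(x_0,t_1)$, so its inward normal derivative there is nonnegative, hence
$$\frac{\partial v}{\partial\nu}(x_0,t_1)\le\frac{\partial U}{\partial\nu}(x_0,t_1)\le\|\nabla U(\cdot,t_1)\|_\infty,$$
which proves \eqref{estimvnu}.

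The only technical point to verify is the existence of a classical solution $U$ up to the parabolic boundary, needed so that Lemma~\ref{gradbound1} applies to $U$ verbatim. This is routine since the initial datum $v(\cdot,t_0)^+$ is continuous and fully compatible with the zero Dirichlet data; if one wishes to avoid invoking this at all, one can instead shift the initial time to $t_0+\delta$ (where $v(\cdot,t_0+\delta)^+$ is as smooth as desired), apply the comparison and Lemma~\ref{gradbound1} on $[t_0+\delta,t_1]$, and then optimise by choosing $\delta=(t_1-t_0)/2$, at the cost of only a harmless universal factor in the final constant.
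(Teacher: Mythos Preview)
Your proof is correct and follows essentially the same approach as the paper's own proof: the paper also introduces the auxiliary solution $w$ of \eqref{eqnvgrad} with initial datum $\phi=\max(v(\cdot,t_0),0)=v(\cdot,t_0)^+$, uses the maximum principle to get $0\le w\le M_1$ and $v\le w$, applies Lemma~\ref{gradbound1} to $w$, and then compares normal derivatives on the boundary. Your additional remark about existence of the auxiliary solution (and the optional time-shift) is a reasonable technical aside but does not alter the identity of the two arguments.
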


\begin{proof}[Proof of Lemma~\ref{gradbound2}]
Let $\phi(x)=\max(v(x, t_0),0)$. Since $\phi\in C_0(\Omega)$, the linear problem 
\be 
\left\{\begin{array}{llll}
\hfill w_t-\Delta w&=&B\cdot \nabla w,& x\in\Omega,\ t_0< t\le t_1,
\vspace{1mm} \\
\hfill w&=&0,& x\in\partial\Omega,\ t_0\le t\le t_1,
\vspace{1mm} \\
\hfill w(x,t_0)&=&\phi(x),& x\in\Omega,
\end{array}
\right.
\ee
admits a unique solution $w\in C([t_0,t_1]\times \overline\Omega)\cap C^{1,2}((t_0,t_1]\times \overline\Omega)$.
Moreover, we have $0\le w\le M_1$ and $v\le w$ in $(t_0,t_1]\times \overline\Omega$ by the maximum principle.
It follows from Lemma~\ref{gradbound1} that
$$\|\nabla w(\cdot,t_1)\|_\infty\le C(\Omega)M_1\max\bigl((t_1-t_0)^{-1/2},M_2\bigr).$$
Since $v=w=0$ on $\partial\Omega$, we deduce that
$$\frac{\partial v}{\partial \nu}(\cdot,t_1)\le \frac{\partial w}{\partial \nu}(\cdot,t_1)\le C(\Omega)M_1\max\bigl((t_1-t_0)^{-1/2},M_2\bigr)
\quad\hbox{ on $\partial\Omega$. }$$
\end{proof}

In \cite{GH, QSbook07, PS3}, the general 
 lower bound $\|\nabla u(t)\|_\infty\ge C(T-t)^{-1/(p-2)}$ was obtained by means of
gradient estimates similar to \eqref{estimgradw}, combined with the boundedness of~$u_t$.
As for the proof in \cite{PS3} that $\lim_{t\to T}(T-t)^{1/(p-2)}\|\nabla u(t)\|_\infty=\infty$ 
for minimal GBU solutions under the assumptions of Theorem~\ref{thmfastratemain},  
it made use of an additional continuity property of $u_t$ near the ``corner''~$(0,T_-)$.
Here, as an improvement on~\cite{PS3}, so as to get 
the more singular estimate \eqref{uxmoresingular}, we will use a more precise upper vanishing estimate of $u_t$ near $t=T$,
which turns out to be satisfied by minimal GBU solutions under the assumptions of Theorem~\ref{thmfastratemain}.

\begin{lemma} \label{boundforut}
Let $u_0$ be as in Theorem~\ref{thmfastratemain}. Fix $t_0\in (0,T)$.
If $u$ is a minimal GBU solution, then there exists a constant $M>0$ such that
\be\label{boundut}
u_t(x,t)\le M(T-t)\quad\hbox{ in $[0,1]\times[t_0,T)$.}
\ee
\end{lemma}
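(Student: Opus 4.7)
The plan is to combine a Sturmian zero-number analysis of $v:=u_t$ with a contradiction/comparison argument based on the minimality of $u$, and then upgrade a qualitative vanishing of $v$ at $t=T$ to the quantitative $O(T-t)$ rate by means of a tailored auxiliary function.

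First, I would observe that $v=u_t$ satisfies the linearization of \eqref{maineq},
\begin{equation*}
v_t - v_{xx} = p|u_x|^{p-2} u_x \, v_x \quad\text{in } (0,1)\times(0,T),
\end{equation*}
with $v(0,\cdot)=v(1,\cdot)=0$ (since $u\equiv0$ on $\partial\Omega$) and initial datum $v(\cdot,0)=u_0''+(u_0')^p$. Hypothesis \eqref{hypID1a} gives $v(\cdot,0)(0)=0$, and \eqref{hypID2a} together with the symmetry $u_0(x)=u_0(1-x)$ yields that $v(\cdot,0)$ is $\ge 0$ on $[0,a]\cup[1-a,1]$ and $\le 0$ on $[a,1-a]$, with at most two interior sign changes. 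Sturm's zero-number theorem, applied to this one-dimensional linear parabolic equation with H\"older continuous drift (note that $p|u_x|^{p-2}u_x$ is smooth in $(0,1)\times(0,T)$ by parabolic regularity), combined with the preserved symmetry $v(x,t)=v(1-x,t)$, yields that for each $t\in(t_0,T)$ there exists $\xi(t)\in(0,1/2]$ with
$$v(\cdot,t)\ge 0 \text{ on } [0,\xi(t)]\cup[1-\xi(t),1],\qquad v(\cdot,t)\le 0 \text{ on } [\xi(t),1-\xi(t)].$$

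Second, I would exploit the minimality of $u$ together with its classical continuation past $T$ (provided by \cite{PZ} and the discussion in \cite{PS3}). Under assumptions \eqref{hypID1a}-\eqref{hypID2a} the minimal GBU solution is immediately regularized for $t>T$ and eventually converges to $0$; in particular $u(\cdot,t)$ is nonincreasing in $t$ for $t$ slightly larger than $T$ at interior points, so $u_t(x,T^+)\le 0$ on $(0,1)$. Passing to the limit $t\to T^-$ via the bound \eqref{boundutMP} and the compactness of $\{v(\cdot,t)\}_{t\to T^-}$ in an interior sense, one obtains $\limsup_{t\to T^-} v(x,t)\le 0$ for each $x\in(0,1)$. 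Combined with Step~1, this forces $v(\cdot,T^-)\equiv 0$ on the positivity sets $[0,\xi(T^-)]\cup[1-\xi(T^-),1]$.

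Third, to upgrade the qualitative vanishing into the linear rate $v\le M(T-t)$, I would construct an auxiliary function of the form $J(x,t):=v(x,t)-M(T-t)\varphi(x)$ (augmented by a small perturbation) where $\varphi\ge 0$ is a carefully designed profile concentrated near the boundary. A direct computation shows
\begin{equation*}
\mathcal{P}J=M\varphi+M(T-t)\bigl[\varphi''+p|u_x|^{p-2}u_x\,\varphi'\bigr],
\end{equation*}
and the strategy is to choose $\varphi$ so that $\mathcal{P}J\le 0$ in the positivity region $\{0\le x\le\xi(t)\}$ (using the outward pointing of the drift there), while the symmetric region is handled analogously. Combined with $J\le 0$ on the parabolic boundary (using $v=0$ on $\partial\Omega$ and the uniform bound on $v(\cdot,t_0)$), the maximum principle then yields $v\le M(T-t)\varphi$ on the positivity region; on the complementary set $v\le 0\le M(T-t)$ is automatic.

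The main obstacle is the third step: the na\"ive choice $\varphi\equiv 1$ yields the wrong sign $\mathcal{P}[M(T-t)]=-M<0$, so $\varphi$ must be nontrivially chosen to absorb this via the drift term $p|u_x|^{p-2}u_x\,\varphi'$, which however blows up at the boundary at the rate $d_\Omega^{-p/(p-1)}$. Designing $\varphi$ whose weighted derivative compensates this singularity while keeping $\varphi$ bounded on $[0,1]$ and compatible with the boundary and initial data is the delicate point, and will require combining the sharp gradient estimate \eqref{FiPucciSouplet} with the time-monotonicity obtained from Step~2.
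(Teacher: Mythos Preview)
Your first two steps are on the right track: the zero-number structure of $v=u_t$ and the fact that $u_t(\cdot,T)\le 0$ on $(0,1)$ for the minimal GBU solution are exactly the ingredients the paper imports from \cite[Proposition~7.1]{PS3}. The difficulty lies entirely in your third step, and as you yourself flag, it does not close.

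The auxiliary function $J=v-M(T-t)\varphi(x)$ satisfies, as you computed,
\[
\mathcal{P}J = M\varphi(x) + M(T-t)\bigl[\varphi''+p|u_x|^{p-2}u_x\,\varphi'\bigr].
\]
To run the maximum principle you need $\mathcal{P}J\le 0$, i.e.
\[
\varphi''(x)+p|u_x(x,t)|^{p-2}u_x(x,t)\,\varphi'(x) \le -\frac{\varphi(x)}{T-t}.
\]
Away from the spatial boundary the drift $p|u_x|^{p-2}u_x$ stays bounded as $t\to T$ (interior gradient estimates), so the left-hand side is bounded independently of $t$, while the right-hand side tends to $-\infty$ wherever $\varphi>0$. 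No choice of a fixed profile $\varphi$ can satisfy this uniformly in $t$ on a set where $\varphi$ is bounded away from zero. The singular drift near $x=0$ does not help, because you still need the inequality on the interior part of the positivity region $(0,\xi(t))$, and there is no reason to expect $\xi(t)\to 0$. In short, the mechanism you propose for absorbing the bad $+M\varphi$ term does not exist.

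The paper avoids this obstruction entirely by differentiating once more in time. Setting $w=u_{tt}$ and differentiating the equation for $v$ gives
\[
w_t-w_{xx}=p|u_x|^{p-2}u_x\,w_x+p(p-1)|u_x|^{p-2}(u_{xt})^2 \ \ge\ p|u_x|^{p-2}u_x\,w_x,
\]
so $w$ is a supersolution of the homogeneous linearized equation with zero boundary data, and the maximum principle yields $u_{tt}\ge -M$ on $[0,1]\times[t_0,T)$. Combining this lower bound on $u_{tt}$ with the information $u_t(x,T)\le 0$ from Step~2 and integrating in time,
\[
u_t(x,t)=u_t(x,T)-\int_t^T u_{tt}(x,s)\,ds \le M(T-t),
\]
gives the lemma in two lines. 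The missing idea in your attempt is precisely this: pass to the second time derivative, whose equation carries a \emph{nonnegative} forcing term, rather than trying to build a barrier for $u_t$ directly.
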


\begin{proof}[Proof of Lemma~\ref{boundforut}]
We first claim that there exists a constant $M>0$ such that
$$u_{tt}>-M\quad\hbox{ in $[0,1]\times[t_0,T)$.}$$ 
Indeed, the function $w:=u_{tt}$ solves  the equation  
\be\label{boundutt}
w_t - w_{xx}=p|u_{x}|^{p-2}u_xu_{ttx}+p(p-1)|u_{x}|^{p-2}(u_{xt})^2\ge p|u_{x}|^{p-2}u_xw_x\,.
\ee
Since $w=u_{tt}=0$ on the boundary the claim follows from the maximum principle.

On the other hand, under the assumptions of the lemma, 
it was shown in \cite[Proposition~7.1]{PS3},
as a consequence of zero-number properties of $u_t$ for minimal GBU solutions, that
$$
u_t(x,T)\le 0,\quad x\in (0,1).
$$
Therefore, for all $x\in (0,1)$, using \eqref{boundutt}, we obtain
$$u_t(x,t)=u_t(x,T)-\int_t^{T} u_{tt}(x,s)\,ds\le -\int_t^{T} u_{tt}(x,s)\,ds\le M(T-t),$$
which proves the Lemma.
\end{proof} 

We are now in a position to prove Theorem~\ref{thmfastratemain}.

\begin{proof}[Proof of Theorem~\ref{thmfastratemain}]
Set 
$$m(t):=\|u_x(\cdot,t)\|_\infty,\quad 0\le t<T.$$
By the maximum principle and the symmetry of $u$, for each $t\in (0,T)$, we have
$$\sup_{s\in [0,t]}m(s)\le \max\Bigl(m(0),\sup_{s\in(0,t]} u_x(0,s)\Bigr).$$
Also, by \cite[Lemma~6.4]{PS3}, we know that for all $t\in (0,T)$, there exists $x_0(t)\in (0,1/2)$ such that $u_t(x,t)>0$ on $(0,x_0(t))$,
hence $u_{tx}(0,t)\ge 0$. Therefore, since $\lim_{t\to T}m(t)=\infty$, there exist $\eta>0$ such that
\be\label{mmaxgrad}
m(t)=u_x(0,t)>0\quad\hbox{and}\quad m'(t)=u_{tx}(0,t)\ge 0,\quad T-\eta\le t<T.
\ee

The function $v:=u_t$ satisfies \eqref{eqnvgrad} with $B=p|u_x|^{p-2}u_x$.
By \eqref{mmaxgrad}, we have
$$|B|\le pm^{p-1}(t)\quad\hbox{ in $[T-\eta,t)\times [0,1]$, for each $t\in [T-\eta,T)$}.$$
Let $t\in (T-\eta/2,T)$. We first apply \eqref{estimvnu}
 with $t_0=T-\eta$, $t_1=t$, $M_1=M\eta$ and $M_2=pm^{p-1}(t)$.
 Using \eqref{mmaxgrad} and \eqref{boundut}, we thus obtain
$$
m'(t)=u_{tx}(0,t)\le CM\eta\max\bigl((t-T+\eta)^{-1/2},m^{p-1}(t)\bigr)
\le C\bigl(1+m(t)\bigr)^{p-1}.
$$
(Here and below, $C$ denotes various positive constants possibly depending on the solution.)
Integrating over $(t,T)$, we deduce that
$1+m(t)\ge C(T-t)^{-1/(p-2)}$ on $(T-\eta/2,T)$,
hence
\be\label{firstestimate}
m(t)\ge C(T-t)^{-1/(p-2)},\quad T-\eta_1<t<T,
\ee
for some $\eta_1\in (0,\eta/2)$.

Let again $t\in (T-\eta/2,T)$. 
Recalling  \eqref{boundut}, we next apply \eqref{estimvnu} with $t_0=2t-T$, $t_1=t$, $M_1=M(T-t_0)=2M(T-t)$ and $M_2=pm^{p-1}(t)$, to get
$$u_{tx}(0,t)\le CM(T-t)\max\bigl((t-t_0)^{-1/2},m^{p-1}(t)\bigr).$$
Since $(p-1)/(p-2)>1/2$ and $t-t_0=T-t$, using \eqref{firstestimate}, we see that, 
for all $t\in (T-\eta_2,T)$ with $\eta_2\in (0,\eta/2)$ small enough, we have
$m^{p-1}(t)\ge C(T-t)^{-(p-1)/(p-2)}\ge (t-t_0)^{-1/2}$
hence
$$m'(t)=u_{tx}(0,t)\le CM(T-t)m^{p-1}(t),\quad T-\eta_2<t<T.$$
The conclusion follows by integrating over $(t,T)$.
\end{proof}

\medskip
{\bf Acknowledgements}  A. Attouchi  is supported by the Academy of Finland, project number
307870. Ph. Souplet  is partially supported by the Labex MME-DII (ANR11-LBX-0023-01).

\medskip

\end{document}